\newcommand{\GFE}{
\cite{bce:gfe}}
\newcommand{\GFEx}[1]{
\cite[#1]{bce:gfe}}
\newcommand{\ntoone}[1]{\mbox{$#1$--to--1}}
\newcommand{\inva}{\mathfrak I}
\newcommand{\symN}{\mathcal S_N}    
\newcommand{\pib}{\overline{\pi}}
\newcommand{\Bb}{\overline{B}}
\newcommand{\Xb}{\overline{X}}
\newcommand{\gb}{\overline{G}}
\newcommand{\vb}{\overline{\mathcal V}}
\newcommand{\Vb}{\overline{\mathcal V}}
\newcommand{\pit}{\widetilde{\pi}}
\newcommand{\symk}{\mathcal S_k}
\newcommand{\symE}{\mathcal S_E}
\newcommand{\symG}{\mathcal S_G}
\newcommand{\TOKEok}[1]{}
\newcommand{\rac}{\gamma}
\newcommand{\FE}[1]{\left[\left[#1\right]\right]_{FE}}
\newcommand{\mulset}{\mathsf{M}^{-1}}
\newcommand{\mulsetdown}{\mathsf{M}}
\newcommand{\rM}{|_{\mathsf{M}^{-1}}}
\newcommand{\mugr}[1]{G[#1]}
\newcommand{\BFgroup}[1]{\mathsf{BF}(#1)}
\newcommand{\cok}{\mathsf{cok}}
\newcommand{\N}{\mathbb{N}}
\newcommand{\Z}{\mathbb{Z}}
\newcommand{\R}{\mathbb{R}}
\newcommand{\T}{\mathbb{T}}
\newcommand{\lan}[1]{\mathsf{L}(#1)}
\newcommand{\al}{{\mathfrak{a}}}
\newcommand{\alp}[1]{\al(#1)}
\newcommand{\maptorus}[1]{\mathsf{S}{#1}}
\newcommand{\mt}[1]{\mathsf{S}{#1}}
\newcommand{\expansion}[2]{\mathfrak{e}_{{#1}}(#2)}
\newcommand{\cover}{\mathfrak{c}}
\newcommand{\labe}{\mathcal{L}}
\newcommand{\osh}[1][]{\sigma_{#1}}
\newcommand{\follower}[1]{\mathsf{F}(#1)}
\renewcommand{\phi}{\varphi}
\newtheorem{lemma}{Lemma}[section]
\newtheorem{corollary}[lemma]{Corollary}
\newtheorem{theorem}[lemma]{Theorem}
\newtheorem{prop}[lemma]{Proposition}
\newtheorem{proposition}[lemma]{Proposition}
\theoremstyle{definition}
\newtheorem{defi}[lemma]{Definition}
\newtheorem{definition}[lemma]{Definition}
\newtheorem{fact}[lemma]{Fact}
\newtheorem{facts}[lemma]{Facts}
\newtheorem{example}[lemma]{Example}
\newtheorem{examples}[lemma]{Examples}
\newtheorem{remark}[lemma]{Remark}
\numberwithin{equation}{section}
\title{Flow equivalence of sofic shifts}
\author{Mike Boyle}
\author{Toke Meier Carlsen}
\author{S\o ren Eilers}
\date{\today}
\begin{document} 
\begin{abstract}
We classify certain sofic shifts (the irreducible
Point Extension Type, or PET, sofic shifts) up to flow equivalence, 
using invariants of the canonical Fischer cover.  
There are two main ingredients. \begin{enumerate}
\item An extension theorem,  for extending flow equivalences of 
subshifts to flow equivalent irreducible shifts of finite type which contain them. 
\item The classification of certain constant to one maps from SFTs 
via algebraic invariants of associated $G$-SFTs.  \end{enumerate}
\end{abstract}

\maketitle

\tableofcontents

\section{Introduction}

Suppose for $i=1,2$ that $\sigma_i\colon X_i \to X_i$ is a 
homeomorphism of a compact metric space. Then 
$\sigma_1$ and $\sigma_2$ are {\it flow equivalent} if they 
are topologically conjugate to return maps to cross 
sections of a common flow. Equivalently, there 
is a homeomorphism between their mapping tori 
which respects orbits and orientation of the 
suspension flows.  
Return maps to cross sections of flows 
arose historically in the study of 
differential equations, and were abstracted to 
topological dynamics on compact 
metric spaces \cite{ns:book}; they were later  
absorbed into a theory of 
cocycles for general group actions 
\cite{hk:book}.  

For zero-dimensional systems, 
flow equivalence is a multifaceted relation.
 Flow equivalence of shifts of finite type turns out to be a significant 
feature of an overall algebraic framework 
for analyzing the topological dynamics of SFTs 
and free $G$-SFTs (SFTs with a continuous shift commuting 
free action by a finite group $G$) 
(see \cite{BSullivan} and its references). 
Flow equivalence is a significant 
ingredient in the ongoing interplay between 
symbolic dynamics and $C^*$-algebras
(see \cite{Matsumoto2013} and its many references).
Especially, the classification of shifts of finite type 
up to flow equivalence 
(by Franks \cite{jf:fesft} in the irreducible case 
and by Huang \cite{mb:fesftpf, mbdh:pbeim, dh:fersft}
in general)
was a critical 
ingredient in the classification of 
Cuntz-Krieger algebras of real rank zero up to stable 
isomorphism \cite{restorff,rordam}. 
Flow equivalence of some subshift 
classes has 
been studied with category theory,   
semigroup theory and sophisticated 
formal language analysis 
\cite{costasteinberg,kriegerfe}. 
The Barge-Diamond topological classification 
\cite{mbbd:citosts} of  one-dimensional 
substitution tiling spaces amounts to the classification 
of the associated substitution subshifts up to 
flow equivalence, and a recent paper by Johansen (\cite{rj:xxx}) addresses the case of beta-shifts.

In this paper, we study  flow equivalence 
of sofic shifts  (the class of subshifts 
most closely generalizing the shifts of finite type), 
and especially the 
almost finite type (AFT) class of Marcus \cite{bm:ssed},
by means of their canonical 
covers. There is a natural notion of 
flow equivalence of such covers, which turns 
out to be a complete invariant of flow equivalence 
of the associated sofic shifts. We use the Fischer covers  
covers to classify a large (but far from general) class 
of AFT shifts up to flow equivalence.  There are 
two main ingredients for this.

 First, relying on \cite{mbwk:ass} we prove 
an extension theorem which lets us extend a 
flow equivalence of a subflow to a flow 
equivalence of mapping tori of flow equivalent irreducible 
SFTs.  With the known classification of 
irreducible SFTs up to flow equivalence, 
this reduces the flow equivalence  classification of 
AFT shifts to the problem of classifying  
the restrictions of their Fischer covers to 
their multiplicity shifts. 

{Next, we introduce a new class of sofic shifts,
  contained in the 
  class of   AFT shifts:  
  the  \lq\lq point extension type\rq\rq\ (PET) sofic shifts.
  A sofic shift $Y$ is PET if for its Fischer cover $\pi$ and
  each $k>1$, 
  the set $\mulsetdown_k(\pi ):=\{y\in Y\colon |\pi^{-1}(y)| =k\}$ is closed.
  For these shifts, drawing further on
work of Adler, Kitchens and Marcus \cite{akmfactor}
(following Rudolph \cite{rudolphcounts}),}
 we reduce the analysis of these restricted covers  
to the problem of 
classifying 
associated G-SFTs
up to equivariant flow equivalence. 
We solve that problem 
in  a separate paper \GFE. The 
complete algebraic invariants are manageable 
for constructions and for analyzing some 
classes. 
We do not know if there is a 
decision procedure 
for determining $G$-flow equivalence in 
general, but many cases can be decided 
(see  Remark \ref{fullforce} and \cite{bs:decide}). 
 
There is 
a natural notion of shift equivalence for sofic shifts
\cite{mbwk:amsess,HamachiNasu} 
which by work of Kim and Roush is decidable 
\cite{KimRoushsoficse}.
The algebraic framework for that work should also be appropriate for 
studying flow equivalence of sofic shifts and 
its decidability. 
Ultimately, {the} flow equivalence of sofic shifts 
{may} be understood from both viewpoints, each 
with its advantages.


The paper has the following structure. 
Sections \ref{sec:fe} and \ref{sec:ms}
are background. Sections \ref{sec:et} and \ref{sec:rt} 
have our main result (the Extension Theorem) and its main 
application (the Reduction Theorem). The Reduction Theorem 
is applied in Section \ref{sec:rt} 
 to classify the 
near Markov shifts up to 
flow equivalence and establish that such shifts spaces are always flow
equivalent to their time-reversals. In Section 
\ref{sec:np}, 
following Adler, Kitchens and Marcus \cite{akmfactor,akmgroup} 
we explain how topological conjugacy 
of certain constant to one maps is equivalent to the topological 
conjugacy 
of certain associated $G$-SFTs. In Section 
\ref{sec:petalg}, 
we apply this  to classify  PET (point extension 
type) sofic shifts up
to flow equivalence.
{
Finally, in Section
\ref{sec:petalg},
we give  procedures to determine
whether a sofic shift is PET and
to compute matrices over $\Z_+G$ from which
the algebraic invariants are defined. 
}

This work was supported by the Danish National Research Foundation through the Centre for Symmetry and Deformation (DNRF92), and by VILLUM FONDEN through the network for Experimental Mathematics in Number Theory, Operator Algebras, and Topology.

\section{Flow equivalence}\label{sec:fe} 
We give a detailed treatment of flow equivalence for 
subshifts in \cite{bce:fei}. Here we give just 
some basic properties needed for this paper. 
The reader is referred to \cite{dlbm:isdc} for further 
background on basic symbolic dynamics.


For a finite alphabet $\al$ we will by $\osh[\al]$ (or just $\osh$ when the alphabet has been fixed) denote the shift
map on $\al^\Z$.
By a shift space $X$, we mean a closed shift invariant subset
  $X$ of  some $\al^\Z$; we use $X$ also to represent the 
dynamical system which is the restriction of the shift to $X$.
For a shift space $X$ we will by $\alp{X}$ denote the alphabet of $X$.

For a shift space $X$, the \emph{mapping torus}
is the quotient space
\[
\maptorus{X}={X\times \R}/{\sim}
\]
with
\[
(x,t)\sim (y,s)\Longleftrightarrow t-s\in \Z\wedge y=\sigma^{t-s}(x).
\]
Note that $\maptorus{X}$ comes equipped with an action of $\R$:
\[
\rac_{t_0}([x,t])=[x,t+t_0].
\]

We say that two shift spaces $X$ and $Y$ are \emph{flow equivalent} if there exists a homeomorphism $\psi\colon\maptorus{X}\to \maptorus{Y}$ which is orientation preserving in the sense that for each $x\in X$ there is an increasing function $f_x\colon\R\to\R$ with the property
\[
\psi(\rac_t(x))=\rac_{f_x(t)}(\psi(x)).
\]
 We call the map $\psi$ 
an equivalence of flows, or a flow equivalence, and denote the flow equivalence class of $X$ by $\FE{X}$. A key notion for us is
that of a cross section:

\begin{defi}
A subset $R\subset \maptorus{X}$ is called a \emph{cross section} when the map $s\colon\R\times R\to \maptorus{X}$ given by
\[
s(t,x)=\rac_t(x)
\]
is a surjective local homeomorphism.
\end{defi}

If $R\subset \maptorus{X}$ is cross section, then there is a well defined \emph{return map} $\rho\colon R\to R$ such that $\rho(x)=\gamma_t(x)$ where $t=\min\{t>0\colon\gamma_t(x)\in R\}$. The dynamical system $(R,\rho)$ is conjugate to a shift space $X_R$, and $\mt X_R$ can in a natural way be identified with $\mt X$.
Note that $X\times \{0\}/\sim$ is always a cross section for $\maptorus X$ and that $(X\times \{0\}/\sim,\rho)$ is conjugate to $(X,\sigma)$.
Throughout this paper, we will identify $(X,\sigma)$ with $(X\times \{0\}/\sim,\rho)$.

\begin{defi}
When $F\subseteq \maptorus{X}$ is closed and invariant under $\rac_t$ we call $F$ a \emph{subflow}.
\end{defi}

For any sliding block code $f\colon Y\to X$ we define
$\maptorus{f}\colon\maptorus{Y}\to\maptorus {X}$ in the obvious 
way, $[(y,t)]\mapsto [(f(y),t)]$, and say that $f$ is flow
equivalent to $f'\colon Y'\to X'$ when there are flow equivalences $\phi,
\psi$ with
\[
\xymatrix{\maptorus{Y}\ar[r]^-{\psi}\ar[d]_-{\maptorus{f}}&\maptorus{Y'}\ar[d]^-{\maptorus{f'}}\\
\maptorus{X}\ar[r]_-{\phi}&\maptorus{X'}}
\]
In this case, suppressing the domains and codomains from the notation,
we write $\FE{f}=\FE{f'}$.

Of course, any conjugacy $\phi\colon Y\to X$ induces a flow equivalence
$\maptorus{\phi}\colon\maptorus{Y}\to\maptorus{X}$, by 
the rule $\gamma_t (y) \mapsto \gamma_t(\phi (y))$, for $y\in Y$ and 
$t\in \R$. 

We describe next the other basic move we use for flow
equivalence:  
{\it symbol expansion}.




\begin{defi} \label{def:expansion}
  Let $\al$ be a finite alphabet and let $A\subseteq\al$. Choose for
  each $a\in A$ a symbol $\star_a$ which does not belong to
  $\al$. Let $\tilde{\al}:=\al\cup\{\star_a\colon
  a\in A\}$. For $a\in\al$ let 
  \begin{equation*}
    \tilde{a}:=
    \begin{cases}
      a&\text{if }a\notin A,\\
      a\star_a&\text{if }a\in A,
    \end{cases}
  \end{equation*}
  and let $\iota_A$ be the map from $\al^\Z$ to $\tilde{\al}^\Z$ which maps
  \begin{align*}
    \dots a_{-2}a_{-1}&.a_0a_1a_2\dots\shortintertext{to}
    \dots
    \tilde{a}_{-2}\tilde{a}_{-1}&.\tilde{a}_0\tilde{a}_1\tilde{a}_2\dots .
  \end{align*}
  We will also use $\iota_A$ to denote the map from $\al^*$ to
  $\tilde{\al}^*$ which maps a word $a_1a_2\dots a_n$ to
  $\tilde{a}_1\tilde{a}_2\dots\tilde{a}_n$.

  For a subshift $X$ of $\al^\Z$ we will by $\expansion{A}{X}$ denote
  the shift space $\{\iota_A(x)\colon x\in
  X\}\cup\{\osh[\tilde{\al}](\iota_A(x))\colon x\in X\}$.
\end{defi}

\begin{lemma} \label{lemma:1}
  Let $X$ and $Y$ be shift spaces, let $\pi\colon Y\to X$ be a one-block
  factor map and let $\labe\colon\alp{Y}\to\alp{X}$ be the map that
  induces $\pi$. Let $A\subseteq\alp{X}$ and let
  $\tilde{A}:=\{b\in\alp{Y}\colon \labe(b)\in A\}$.  Choose for each $a\in
  A$ a symbol $\star_a$ which does not belong to $\alp{X}$, and choose for
  each $b\in\tilde{A}$ a symbol $\star_b$ which does not belong to
  $\alp{Y}$. Let $\tilde{\labe}$ be the map from $\alp{Y}\cup\{\star_b\colon
  b\in \tilde{A}\}$ to $\alp{X}\cup\{\star_a\colon a\in A\}$ that maps $c$
  to $\labe(c)$ for $c\in\alp{Y}$ and $\star_b$ to $\star_{\labe(b)}$ for
  $b\in\tilde{A}$, and let $\expansion{A}{\pi}$ be the restriction of
  the one-block factor map induced by $\tilde{\labe}$ to
  $\expansion{\tilde{A}}{Y}$. 

  Then $\expansion{A}{\pi}$ is a factor map from
  $\expansion{\tilde{A}}{Y}$ to $\expansion{A}{X}$, and the two factor
  maps $\pi$ and $\expansion{A}{\pi}$ are flow equivalent.
\end{lemma}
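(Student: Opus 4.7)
The plan has two parts. First I would verify that the one-block code induced by $\tilde{\labe}$ sends $\expansion{\tilde A}{Y}$ into $\expansion{A}{X}$: every point of $\expansion{\tilde A}{Y}$ is either $\iota_{\tilde A}(y)$ or $\osh[\tilde{\al}](\iota_{\tilde A}(y))$ for some $y\in Y$, and the definition of $\tilde{\labe}$ is precisely what makes the symbolwise identity $\tilde{\labe}\circ\iota_{\tilde A}=\iota_A\circ\labe$ hold. Therefore the induced map sends $\iota_{\tilde A}(y)\mapsto \iota_A(\pi(y))\in\expansion{A}{X}$, and the shifted case follows because one-block codes commute with $\osh[]$. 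Surjectivity is immediate by lifting: for $x'\in X$, pick $y'\in\pi^{-1}(x')$; then $\iota_{\tilde A}(y')$ maps to $\iota_A(x')$, and shifted lifts handle shifted points in $\expansion{A}{X}$.

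Next I would construct the flow equivalence by introducing the continuous ceiling function $r_Y\colon Y\to\{1,2\}$ defined by $r_Y(y)=2$ if $y_0\in\tilde A$ and $r_Y(y)=1$ otherwise (continuous since the relevant cylinder set is clopen). For each $y\in Y$ let $h_y\colon\R\to\R$ be the orientation preserving piecewise linear homeomorphism with $h_y(0)=0$ whose slope on $[n,n+1]$ equals $r_Y(\osh^n(y))$. Define
\[
\psi_Y\colon\maptorus{Y}\to\maptorus{\expansion{\tilde A}{Y}},\qquad [y,t]\longmapsto [\iota_{\tilde A}(y),h_y(t)].
\]
The cocycle identity $h_{\osh^k(y)}(s)=h_y(k+s)-h_y(k)$, the integrality $h_y(k)\in\Z$, and the equivariance $\osh^{h_y(k)}(\iota_{\tilde A}(y))=\iota_{\tilde A}(\osh^k(y))$ together show that $\psi_Y$ descends to a well-defined map on the mapping torus; strict monotonicity of each $h_y$ makes it orientation preserving, and the inverse is assembled from the $h_y^{-1}$. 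The same recipe with $r_X$ in place of $r_Y$ produces a flow equivalence $\psi_X\colon \maptorus{X}\to\maptorus{\expansion{A}{X}}$ built from piecewise linear homeomorphisms $h'_{x}$.

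Commutativity of the square rests on matching of the two ceiling functions under $\pi$: for every $y\in Y$,
\[
r_Y(y)=2 \iff y_0\in\tilde A \iff \labe(y_0)\in A \iff \pi(y)_0\in A \iff r_X(\pi(y))=2,
\]
so $r_Y(\osh^n(y))=r_X(\osh^n(\pi(y)))$ for all $n$, which gives $h_y=h'_{\pi(y)}$. Combined with the pointwise identity $\expansion{A}{\pi}(\iota_{\tilde A}(y))=\iota_A(\pi(y))$, this produces $\maptorus{\expansion{A}{\pi}}\circ\psi_Y=\psi_X\circ\maptorus{\pi}$, proving $\FE{\pi}=\FE{\expansion{A}{\pi}}$. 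The one genuinely nontrivial step is the descent of $\psi_Y$ to the mapping torus (equivalently, the identification of $\maptorus{\expansion{\tilde A}{Y}}$ with the suspension of $Y$ under the step ceiling $r_Y$); once that is verified, commutativity is pure bookkeeping that falls out of the designs of $\tilde{\labe}$ and of the ceilings.
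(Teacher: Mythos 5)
Your proof is correct and takes essentially the same approach as the paper: both construct the flow equivalence by dilating the suspension flow by a factor of $2$ over the expanded symbols and deduce commutativity of the square from the fact that $\pi$ is a one-block code, so that expanded symbols upstairs sit exactly over expanded symbols downstairs. The only difference is presentational — the paper writes the map out explicitly on the fundamental domain $t\in[0,1[$ by cases, whereas you package the same map as a time-change cocycle $h_y$ built from the ceiling function $r_Y$.
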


\begin{proof}
  It is straightforward to check that
  $\expansion{A}{\pi}(\expansion{\tilde{A}}{Y})=\expansion{A}{X}$.

  Let $\phi$ be the map from $\maptorus{X}$ to
  $\maptorus{\expansion{A}{X}}$ defined by 
  \begin{equation*}
    \phi([x,t])=
    \begin{cases}
      [\iota_A(x),2t]&\text{if $x_0\in A$ and $t\in [0,1/2[$},\\
      [\osh[\alp{T}\cup\{\star_a\colon a\in A\}](\iota_A(x)),2t-1]&\text{if
        $x_0\in A$ and $t\in [1/2,1[$},\\ 
      [\iota_A(x),t]&\text{if }x_0\notin A,
    \end{cases}
  \end{equation*}
  for $x\in X$ and $t\in [0,1[$, and let $\tilde{\phi}$ be the map from 
  $\maptorus{Y}$ to $\maptorus{\expansion{\tilde{A}}{Y}}$ defined by 
  \begin{equation*}
    \tilde{\phi}([y,t])=
    \begin{cases}
      [\iota_{\tilde{A}}(y),2t]&\text{if $y_0\in \tilde{A}$ and $t\in
        [0,1/2[$},\\ 
      [\osh[\alp{S}\cup\{\star_b\colon
      b\in\tilde{A}\}](\iota_{\tilde{A}}(y)),2t-1]&\text{if $y_0\in
          \tilde{A}$ and $t\in [1/2,1[$},\\ 
      [\iota_{\tilde{A}}(y),t]&\text{if }y_0\notin \tilde{A},
    \end{cases}
  \end{equation*}
for $y\in Y$ and $t\in [0,1[$. It is not difficult to check that
$\phi$ and $\tilde{\phi}$ are flow equivalences and that the diagram 
\begin{equation*}
    \xymatrix{Y\ar[r]^{\tilde{\phi}}\ar[d]_{\pi}&
      \expansion{\tilde{A}}{Y}\ar[d]^{\expansion{A}{\pi}}\\
      X\ar[r]^{\phi}&
      \expansion{A}{X}}
  \end{equation*}
  commutes. Thus $\pi$ and $\expansion{A}{\pi}$ are flow equivalent
  factor maps. 
\end{proof}

If $a\in\al$, then we will write $\expansion{a}{\pi}$ instead of
$\expansion{\{a\}}{\pi}$, and $\iota_a$ instead of $\iota_{\{a\}}$.

We will need to make frequent reference to the classical invariants 
of Parry and Sullivan \cite{parrysullivan} and Bowen and Franks
\cite{BowenFranks}, which were shown to 
be complete invariants of flow equivalence for 
infinite irreducible SFTs by  
Franks \cite{jf:fesft}.  
When a shift of finite type is given by an $n\times n$ adjacency
matrix $A$, the invariant consists of the Bowen-Franks group
$\cok(I-A)$ along with the sign of $\det(I-A)$.
A zero entropy irreducible SFT 
is a cyclic permutation of 
finitely many points, whose mapping torus is a circle; 
in this case the 
Bowen-Franks group is $\Z$,  which is also 
the Bowen-Franks group of 
some infinite SFTs.

\section{Multiplicity sets of canonical covers}\label{sec:ms}


We assume some familiarity with basic 
symbolic dynamics; \cite{dlbm:isdc} is an excellent basic 
reference. In this section we sketch some of the background.

\begin{defi}
  By a \emph{cover} we mean a pair of maps $(\cover,\pi_\cover)$
  defined on the class of irreducible sofic shifts which to an
  irreducible sofic shift $X$ associate an irreducible shift of finite
  type $\cover(X)$ and a factor map $\pi_{\cover(X)}\colon\cover(X)\to X$. 


  We will say that such a cover $(\cover,\pi_\cover)$ is
  \emph{canonical} if the following 
  holds: If $X_1$ and $X_2$ are irreducible sofic shifts and
  $\phi\colon X_1\to X_2$ is a conjugacy, then there exists a 
unique conjugacy
  $\cover(\phi)\colon \cover(X_1)\to\cover(X_2)$ such that the diagram 
  \begin{equation*}
    \xymatrix{
      \cover(X_1)\ar[r]^{\cover(\phi)}\ar[d]_{\pi_{\cover(X_1)}} 
      &\cover(X_2)\ar[d]^{\pi_{\cover(X_2)}}\\
      X_1\ar[r]^{\phi}&X_2}
  \end{equation*}
  commutes.

  We will say that a cover $(\cover,\pi_\cover)$ \emph{respects
    symbol expansion} if the following holds: The factor map
  $\pi_{\cover(X)}$ is a one-block code for all irreducible sofic
  shifts $X$, and if $X$ is an
  irreducible sofic shift, $a\in\alp{X}$,
  $\labe_{\cover(X)}\colon \alp{\cover(X)}\to\alp{X}$ is the map that
  induces $\pi_{\cover(X)}$, and $A=\{b\in\alp{\cover(X)}\colon 
  \labe_{\cover(X)}(b)=a\}$, then there exists a
  conjugacy $\phi\colon \cover(\expansion{a}{X})\to\expansion{A}{\cover(X)}$
  such that the diagram 
  \begin{equation*}
    \xymatrix{
      \cover(\expansion{a}{X})\ar[rr]^{\phi}
      \ar[rd]_{\pi_{\cover(\expansion{a}{X})}} 
      &&\expansion{A}{\cover(X)}\ar[ld]^{\expansion{a}{\pi_{\cover(X)}}}\\
      &\expansion{a}{X}&}
  \end{equation*}
  commutes.
\end{defi}

Krieger has in \cite{KriegerSoficI} proved that the right Fischer cover is
canonical. We will now record that it also respects symbol expansion.

\begin{prop} \label{prop:fischer}
  The right Fischer cover respects symbol expansion.
\end{prop}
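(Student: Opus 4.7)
My strategy is to verify that $\expansion{A}{\cover(X)}$, equipped with the induced one-block code $\expansion{a}{\pi_{\cover(X)}}$, is itself a right Fischer cover of $\expansion{a}{X}$, and then to invoke Krieger's uniqueness theorem (\cite{KriegerSoficI}) to produce the desired conjugacy $\phi$. Recall that the right Fischer cover of an irreducible sofic $Y$ is characterized, up to conjugacy, as the unique right-resolving one-block presentation $\pi\colon Z\to Y$ with $Z$ an irreducible SFT in which distinct states of the underlying graph have pairwise distinct follower sets.

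First, I would work with a graph presentation $G$ of $\cover(X)$ labeled by $\labe_{\cover(X)}$, and describe the graph $G'$ of $\expansion{A}{\cover(X)}$ explicitly: each edge $u\xrightarrow{b} v$ of $G$ with $b\in A$ is subdivided into $u\xrightarrow{b}w_{e}\xrightarrow{\star_b} v$ for a new intermediate vertex $w_e$, while edges labeled outside $A$ remain unchanged. That $G'$ presents $\expansion{A}{\cover(X)}$, that $\expansion{A}{\cover(X)}$ is an SFT (symbol expansion of an SFT is SFT), and that $\expansion{a}{\pi_{\cover(X)}}$ is obtained as the one-block code induced by the $\tilde\labe$ of Lemma \ref{lemma:1} are all immediate once this graph-level description is in hand.

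Next I would verify the three defining features of a Fischer cover:
\begin{enumerate}
\item \emph{Irreducibility of $G'$.} Every original vertex of $G$ still appears in $G'$, and any path between original vertices in $G$ lifts to a path in $G'$ (passing through the appropriate intermediate $w_e$ when traversing an $A$-edge). Every intermediate $w_e$ has a unique predecessor and successor among original vertices, so it is reachable from and reaches any original vertex, giving strong connectedness of $G'$.
\item \emph{Right-resolving.} At an original vertex $u$, the outgoing edges of $G'$ retain the labels they had in $G$, which are pairwise distinct by the right-resolving property of $\pi_{\cover(X)}$; at an intermediate vertex $w_e$ there is a single outgoing edge, trivially distinct.
\item \emph{Distinct follower sets.} For two original vertices $u\ne u'$ the follower sets in $G$ differ, and applying $\iota_A$ to distinguishing one-sided sequences produces distinguishing one-sided sequences in $G'$. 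An intermediate vertex $w_e$ is the unique vertex whose follower set consists of sequences beginning with the prescribed $\star_b$; two intermediates $w_e\ne w_{e'}$ with the same initial $\star_b$ differ because the target vertices of $e$ and $e'$ in $G$ (which become the vertices reached after $\star_b$ from $w_e$ and $w_{e'}$) are different and have distinct follower sets in $G$ by the Fischer property of $G$.
\end{enumerate}

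By Krieger's uniqueness theorem this identifies $(\expansion{A}{\cover(X)},\expansion{a}{\pi_{\cover(X)}})$ with the right Fischer cover of $\expansion{a}{X}$, producing a conjugacy $\phi\colon\cover(\expansion{a}{X})\to\expansion{A}{\cover(X)}$; the commutativity of the triangle is forced by the canonicity of $\cover$, since $\expansion{a}{\pi_{\cover(X)}}\circ\phi$ and $\pi_{\cover(\expansion{a}{X})}$ are both right-resolving factor maps onto $\expansion{a}{X}$ from conjugate Fischer covers. The main obstacle is item (3): verifying the Fischer property for the newly introduced intermediate vertices requires tracking how the subdivision interacts with distinguishing one-sided sequences, and in particular using that within each fiber $\{w_e : \labe(e)=b\}$ the intermediates are separated precisely by the already-distinct follower sets of the targets in $G$.
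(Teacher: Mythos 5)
Your overall route --- characterizing the right Fischer cover as the unique irreducible, right-resolving, follower-separated presentation and then verifying these three properties for the subdivided graph $G'$ --- is a legitimate alternative to the paper's argument (which instead computes the magic words and their follower sets for $\expansion{a}{X}$ explicitly), but your verification of item (3) contains a genuine gap. All of the new edges of $G'$ carry the \emph{same} label $\star_a$, since $\tilde{\labe}(\star_b)=\star_{\labe(b)}=\star_a$ for every $b\in A$; consequently the follower set of an intermediate vertex $w_e$ is exactly $\{\star_a\}\cup\star_a\follower{t(e)}$, where $t(e)$ denotes the terminal vertex of $e$ in $G$, and so it depends only on $t(e)$. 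Your assertion that for $e\neq e'$ in $A$ ``the target vertices of $e$ and $e'$ in $G$ \dots are different'' is unjustified and false in general: nothing in the definition of the Fischer cover forbids two distinct $a$-labeled edges from sharing a terminal vertex. Already for the golden mean shift, whose Fischer cover is $1\xrightarrow{a}1$, $2\xrightarrow{a}1$, $1\xrightarrow{b}2$, the two $a$-edges end at the same vertex, so the two intermediate vertices of $G'$ have identical follower sets. Hence $G'$ is in general \emph{not} follower-separated, it is not the minimal right-resolving presentation of $\expansion{a}{X}$, and the uniqueness theorem cannot be invoked to identify $\expansion{A}{\cover(X)}$ with $\cover(\expansion{a}{X})$ in the way you propose.

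The proposition survives, but an extra step is needed which your argument omits. The follower-separated quotient of $G'$ merges precisely the intermediate vertices $w_e,w_{e'}$ with $t(e)=t(e')$ (your separation of original vertices from one another and from intermediates is fine), identifying the corresponding edges $\star_e$ and $\star_{e'}$; this quotient is the Fischer cover of $\expansion{a}{X}$, but the induced one-block map of edge shifts $\expansion{A}{\cover(X)}\to\cover(\expansion{a}{X})$ is no longer a bijective relabelling of edges. One must check separately that it is injective on bi-infinite points: this holds because every occurrence of a merged $\star_a$-edge is immediately preceded by one of the (unmerged) edges $e$, $e'$, so the preimage is recovered with one symbol of memory. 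Only after this does one obtain the conjugacy $\phi$ and the commuting triangle. The same subtlety is visible in the paper's computation, where the vertex $\follower{\iota_a(u)a}$ of the Fischer cover of $\expansion{a}{X}$ is shown to depend only on $\follower{ua}$ --- that is, only on the terminal vertex of the relevant $a$-edge --- which is exactly why the intermediate vertices cannot in general be put in bijection with the $a$-labeled edges of $G$.
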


\begin{proof}
  Let $X$ be an irreducible sofic shift. The right Fischer cover of
  $X$ can be constructed as follows. Say that a word $u\in\lan{X}$ (the language of $X$) is
  \emph{magic} (also known as \emph{intrinsically synchronizing}) if it has the
  following property: If $vu,uw\in\lan{X}$, then $vuw\in\lan{X}$. For
  a magic word $u$ let $\follower{u}:=\{v\in\lan{X}\colon 
  uv\in\lan{X}\}$, and let $(G,\labe)$ be the labeled graph with vertex set
  $\{\follower{u}\colon  u\in\lan{X}\text{ is magic}\}$ and where there for
  two magic words $u$ and $v$ and a symbol $a\in\alp{X}$ is an edge
  from $\follower{u}$ to $\follower{v}$ labeled $a$ if and only if
  $\follower{v}=\follower{ua}$. Then the right Fischer cover $(Y,\pi)$
  of $X$ is
  the edge shift $Y$ of $G$ together with the one-block code $\pi$ induced
  by $\labe$. 

  Let $a\in\alp{X}$. It is not difficult to check that we have
  \begin{multline*}
    \{u\in\lan{\expansion{a}{X}}\colon  u\text{ is magic}\}\\
    =\{\star_a\iota_a(u),\iota_a(u)a,\iota_a(u)\in\lan{\expansion{a}{X}}\colon 
    u\in\lan{X}\text{ is magic}\},
  \end{multline*}
  and that if $u\in\lan{X}$ is magic, then
  $$\follower{\iota_a(u)}=\{\iota_a(v),\iota_a(w)a\in\lan{\expansion{a}{X}}\colon 
  v,w\in\follower{u}\},$$ if $u\in\lan{X}$ is magic and
  $\iota_a(u)a\in\lan{\expansion{a}{X}}$, then
  $$\follower{\iota_a(u)a}
  =\{\star_a\iota_a(v),\star_a\iota_a(w)a\in\lan{\expansion{a}{X}}\colon 
  v,w\in\follower{ua}\},$$ and if $u\in\lan{X}$ is magic and
  $\star_a\iota_a(u)\in\lan{\expansion{a}{X}}$, then
  $\follower{\star_a\iota_a(u)}=\follower{\iota_a(au)}$.
  Thus if $(\hat{Y},\hat{\pi})$ is the right Fischer cover of
  $\expansion{a}{X}$ and $\hat{\labe}$ is the labeling which induces
  $\hat{\pi}$, and
  $A=\{e\in\alp{Y}\colon  \labe(e)=a\}$, then
  there exists a bijection $\eta$ from $\alp{\expansion{A}{Y}}$ to
  $\alp{\hat{Y}}$ which maps $e$ to the edge from $\follower{\iota_a(u)}$ to
  $\follower{\iota_a(v)}$ labeled $b$ if $e$ is the edge from $\follower{u}$ to
  $\follower{v}$ labeled $b$ and $b\ne a$, maps $e$ to the
  edge from $\follower{\iota_a(u)}$ to $\follower{\iota_a(u)a}$
  labeled $a$ if $e$ is the edge from $\follower{u}$ to
  $\follower{ua}$ labeled $a$, and maps $\star_e$ to the edge from
  $\follower{\iota_a(u)a}$ to $\follower{\iota_a(ua)}$ labeled $\star_a$
  if $e$ is the edge from $\follower{u}$ to $\follower{ua}$ labeled
  $a$. If $\tilde{\labe}$ is the map from $\alp{\expansion{A}{Y}}$ to
  $\alp{\expansion{a}{X}}$ which maps $e$
  to $\labe(e)$ for $e\in\alp{Y}$ and $\star_e$ to $\star_a$ for
  $e\in A$, then $\hat{\labe} \eta=\tilde{\labe}$, and it follows
  that if $\phi$ is the conjugacy from $\expansion{A}{Y}$ to
  $\hat{Y}$ induced by $\eta$, then the diagram 
  \begin{equation*}
    \xymatrix{
      \expansion{A}{Y}\ar[rr]^{\phi}
      \ar[rd]_{\expansion{a}{\pi}} 
      &&\hat{Y}\ar[ld]^{\hat{\pi}}\\
      &\expansion{a}{X}&}
  \end{equation*}
  commutes.
\end{proof}

One can in a similar way prove that the left Fischer cover, the right
and the left Krieger cover, the predecessor set cover and the follower
set cover all respect symbol expansion. It is shown in
\cite{KriegerSoficI} that the left Fischer cover, the right
and the left Krieger cover also are canonical.

We now have:

\begin{theorem} \label{theorem:prereduction}
  Let $(\cover,\pi_\cover)$ be a cover which is canonical and respects
  symbol expansion. 
  When $X_1$ and $X_2$ are flow equivalent irreducible sofic shifts
the two factor maps $\pi_{\cover(X_1)}$ and
    $\pi_{\cover(X_2)}$ are flow equivalent factor maps. In other words,
\[
\FE{X_1}=\FE{X_2}\Longrightarrow \FE{\pi_{\cover{X_1}}}=\FE{\pi_{\cover{X_2}}}.
\]
\end{theorem}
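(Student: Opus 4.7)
The plan is to reduce the statement to two generating moves of flow equivalence and check each one separately, using the two hypotheses on the cover one at a time.

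First, I would appeal to the Parry--Sullivan style decomposition of flow equivalence: any flow equivalence $X_1 \sim X_2$ of subshifts can be realised by a finite chain
\[
X_1 = Z_0 \to Z_1 \to \cdots \to Z_n = X_2,
\]
where each arrow $Z_i \to Z_{i+1}$ is either a conjugacy or a symbol expansion $Z_{i+1} = \expansion{a}{Z_i}$ (or its inverse). Since conjugacy and symbol expansion both preserve the class of irreducible sofic shifts, every intermediate $Z_i$ is again irreducible sofic, so the cover $\pi_{\cover(Z_i)}$ is defined for each $i$. Because flow equivalence of factor maps is an equivalence relation and composes in the obvious way, it suffices to prove $\FE{\pi_{\cover(Z_i)}} = \FE{\pi_{\cover(Z_{i+1})}}$ at each individual step.

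In the conjugacy case, $\phi \colon Z_i \to Z_{i+1}$ is a conjugacy, which is trivially a flow equivalence via $\mt{\phi}$. Since $(\cover,\pi_\cover)$ is canonical, there is a conjugacy $\cover(\phi) \colon \cover(Z_i) \to \cover(Z_{i+1})$ with $\pi_{\cover(Z_{i+1})} \circ \cover(\phi) = \phi \circ \pi_{\cover(Z_i)}$, and passing to mapping tori gives a commuting square
\[
\xymatrix{\mt{\cover(Z_i)}\ar[r]^-{\mt{\cover(\phi)}}\ar[d]_-{\mt{\pi_{\cover(Z_i)}}}&\mt{\cover(Z_{i+1})}\ar[d]^-{\mt{\pi_{\cover(Z_{i+1})}}}\\
\mt{Z_i}\ar[r]_-{\mt{\phi}}&\mt{Z_{i+1}}}
\]
which is precisely a flow equivalence between the two cover factor maps.

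In the symbol expansion case, $Z_{i+1} = \expansion{a}{Z_i}$ for some $a \in \alp{Z_i}$. Let $A = \{b \in \alp{\cover(Z_i)} \colon \labe_{\cover(Z_i)}(b) = a\}$. The fact that $(\cover,\pi_\cover)$ respects symbol expansion supplies a conjugacy $\phi \colon \cover(\expansion{a}{Z_i}) \to \expansion{A}{\cover(Z_i)}$ with $\expansion{a}{\pi_{\cover(Z_i)}} \circ \phi = \pi_{\cover(\expansion{a}{Z_i})}$, so $\pi_{\cover(Z_{i+1})}$ is flow equivalent (indeed conjugate, handled as in the previous case) to $\expansion{a}{\pi_{\cover(Z_i)}}$. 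Now Lemma~\ref{lemma:1} applied with $\pi = \pi_{\cover(Z_i)}$ gives $\FE{\pi_{\cover(Z_i)}} = \FE{\expansion{a}{\pi_{\cover(Z_i)}}}$. Chaining these two flow equivalences of factor maps finishes the step.

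The only subtle point I would want to double-check is the appeal to a Parry--Sullivan-type decomposition valid for general subshifts rather than just SFTs; this is presumably covered in the companion paper \FEI\ on flow equivalence of subshifts, and it is the place where the argument's burden really lies, since after that reduction everything is a formal consequence of the two structural properties of $(\cover,\pi_\cover)$ and of Lemma~\ref{lemma:1}. If that decomposition is unavailable in the generality needed, I would instead work directly with cross sections: any flow equivalence $\mt{X_1} \to \mt{X_2}$ can be realised by inserting intermediate cross sections, each step of which is either a conjugacy of return maps or a symbol expansion, and run the same two-case argument.
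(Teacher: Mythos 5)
Your proposal is correct and is essentially the paper's own argument: the paper likewise reduces to the two generating moves (conjugacy, handled by canonicity; symbol expansion, handled by the symbol-expansion property together with Lemma~\ref{lemma:1}), and the generation of flow equivalence of general subshifts by these two moves is exactly the Parry--Sullivan-type fact you flag, which is established in the companion paper \FEI{} and simply quoted here.
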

\begin{proof}
  Since flow equivalence among shift spaces is generated by conjugacy
  and symbol expansion, it is enough to prove that if $X_1$ and $X_2$ are
  conjugate, then $\pi_{\cover(X_1)}$ and $\pi_{\cover(X_2)}$ are
  conjugate factor maps, and 
  that if $a\in\lan{X_1}$ and $X_2=\expansion{a}{X_1}$, then $\pi_{\cover(X_1)}$
  and $\pi_{\cover(X_2)}$ are flow equivalent factor maps. The first of these
  assertions is exactly the assertion that $\cover$ is canonical, and the
  second follows from the assumption that $\cover$ respects symbol
  expansion and Lemma \ref{lemma:1}. 
\end{proof}

Absolutely central to our approach in this 
paper will be the restriction of covers to their 
multiplicity sets.

\begin{defi} \label{defn:multiplicity}
Given a finite to one map $\pi \colon Y\to X$, we 
define 
\begin{align*} 
\mathsf{MultiCard}(\pi ) \ &=\ 
\{ k\in \N\colon  k>1 \text{ and } \exists x\in X , |\pi^{-1}(x)|=k\}, \\ 
\mulsetdown_k (\pi)\ & =\ \{x\in X\colon
|\pi^{-1}(x)| = k \}, \\  
\mulset_k (\pi) \ & =
\ \pi^{-1}( \mulsetdown_k (\pi)), \\
\mulsetdown(\pi) \ & =\bigcup_{k>1}\mulsetdown_k (\pi), \\
\mulset(\pi) \ & =\bigcup_{k>1}\mulset_k (\pi). \\
\end{align*} 
\end{defi}
 
We denote these sets as \emph{multiplicity sets} and note that they are always
 shift invariant. Thus, they become a shift spaces in their own right precisely when they are closed. In this case, restricted factor maps such as
\[
\pi\rM\colon \mulset(\pi)\to \mulsetdown(\pi)
\]
are defined. We note 

\begin{corollary} \label{corollary:prereduction}
  Let $(\cover,\pi_\cover)$ be a cover which is canonical, respects
  symbol expansion and is right or left resolving. When $X_1$ and $X_2$ are flow equivalent irreducible sofic shifts, then
  \[
  \mathsf{MultiCard}({\pi_{\cover(X_1)}})=  \mathsf{MultiCard}({\pi_{\cover(X_2)}}).
  \]
  \end{corollary}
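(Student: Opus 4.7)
The plan is to show that $\mathsf{MultiCard}$ is itself a flow equivalence invariant of finite-to-one factor maps, and then invoke Theorem \ref{theorem:prereduction}. The hypothesis that $(\cover,\pi_\cover)$ is right or left resolving guarantees that each $\pi_{\cover(X_i)}$ is finite-to-one with uniformly bounded multiplicities (bounded, say, by the alphabet size of the cover), so $\mathsf{MultiCard}(\pi_{\cover(X_i)})$ is a well-defined finite subset of $\N$.

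By Theorem \ref{theorem:prereduction} we obtain flow equivalences $\phi\colon\maptorus{X_1}\to\maptorus{X_2}$ and $\psi\colon\maptorus{\cover(X_1)}\to\maptorus{\cover(X_2)}$ with $\phi\circ\maptorus{\pi_{\cover(X_1)}}=\maptorus{\pi_{\cover(X_2)}}\circ\psi$. Since $\psi$ is a homeomorphism and the diagram commutes, for each $p\in\maptorus{X_1}$ the map $\psi$ restricts to a bijection between $\maptorus{\pi_{\cover(X_1)}}^{-1}(p)$ and $\maptorus{\pi_{\cover(X_2)}}^{-1}(\phi(p))$. Consequently the collection of fiber cardinalities realised by $\maptorus{\pi_{\cover(X_1)}}$ over points of $\maptorus{X_1}$ coincides with that realised by $\maptorus{\pi_{\cover(X_2)}}$ over points of $\maptorus{X_2}$.

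It remains to connect fiber cardinalities of $\maptorus{\pi}$ to those of $\pi$. I would verify that for any finite-to-one sliding block code $\pi\colon Y\to X$ and any $[x,t]\in\maptorus{X}$, one has $|\maptorus{\pi}^{-1}([x,t])|=|\pi^{-1}(x)|$. Normalising so that $t\in[0,1)$, any class in the preimage has a unique representative $[y,t]$ with $\pi(y)=x$, and the assignment $y\mapsto[y,t]$ is injective on $\pi^{-1}(x)$ (if $[y_1,t]=[y_2,t]$, then $y_2=\sigma^n(y_1)$ and $t-t=n$ force $n=0$). Moreover $|\pi^{-1}(x)|=|\pi^{-1}(\sigma^n x)|$ since $\pi$ commutes with the shift, so the cardinality is an invariant of the orbit of $x$ in $\maptorus{X}$, making the identification consistent. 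Combined with the previous paragraph this gives $\mathsf{MultiCard}(\pi_{\cover(X_1)})=\mathsf{MultiCard}(\pi_{\cover(X_2)})$.

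The argument is almost entirely formal once Theorem \ref{theorem:prereduction} is in hand; the only modest obstacle is the fiber-cardinality identification in the mapping torus, which is really bookkeeping with the suspension equivalence relation and the shift-equivariance of $\pi$.
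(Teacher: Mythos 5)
Your proof is correct, and it takes a genuinely different route from the paper's. The paper also reduces to Theorem \ref{theorem:prereduction}, but then detects $k\in\mathsf{MultiCard}(\pi)$ from the homeomorphism class of $\maptorus{\pi}$ by locating configurations of flow lines: over a non-periodic point, $k$ distinct lines homeomorphic to $\R$ covering one such line; over a periodic point, $m\leq k$ circles covering a circle with positive winding numbers summing to $k$. That argument needs the resolving hypothesis in a substantive way, to guarantee that $y$ is periodic exactly when $\pi(y)$ is, so that the two configurations exhaust all cases. You instead observe that the pointwise fiber cardinality of $\maptorus{\pi}$ already equals $|\pi^{-1}(x)|$ (your normalisation argument with the representative $[y,t]$, $t\in[0,1)$, is exactly right), and that fiber cardinalities are manifestly preserved by the commuting square of homeomorphisms coming from flow equivalence of the factor maps. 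This is shorter, avoids any case analysis on periodicity, and uses the resolving hypothesis only to ensure the maps are finite-to-one so that $\mathsf{MultiCard}$ is defined; in that sense your argument is slightly more general, applying to any canonical, symbol-expansion-respecting cover whose factor maps are finite-to-one. What the paper's flow-line formulation buys in exchange is a description of the invariant purely in terms of intrinsic structures of the suspension flow (lines, circles, winding numbers) rather than of the particular quotient maps $\maptorus{\pi}$, which is the viewpoint used again later (e.g.\ in Lemma \ref{mulgraphiscool}). One trivial quibble: the uniform bound on fiber size for a right- or left-resolving cover is most naturally the number of states of the presenting graph rather than the alphabet size, but this does not affect anything.
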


\begin{proof}
Because of Theorem \ref{theorem:prereduction} it suffices to prove that for such a cover $\pi\colon Y\to X$ we can infer whether $k\in \mathsf{MultiCard}(\pi)$ directly from the homeomorphism class of the map
\[
\maptorus\pi\colon \maptorus Y\to\maptorus X.
\]
For this, we note that 
since the covers are either right or left resolving, we see that $y\in Y$ is periodic precisely when $\pi(y)$ is (although possibly with another period).
Assume now that $k\in \mathsf{MultiCard}(\pi)$ and fix  $\pi(y_1)=\dots=\pi(y_k)=x$ with all $y_i\not= y_j$, $i\not=j$.

When $x$ is not periodic, neither is any of the $y_i$, and hence in $\maptorus X$ there is a flow line 
homeomorphic to $\R$ having precisely 
  $k$ distinct flow lines homeomorphic to $\R$ in its preimage. 
When $x$ is periodic, so are all $y_i$, and hence in $\maptorus Y$ there are $m\leq k$ distinct flow lines homeomorphic to $\T$ mapping to the same flow line homeomorphic to $\T$. The winding numbers for each of these maps will be positive and sum up to $k$.

In the other direction, whenever one of these configurations can be found, $x$ and $y_1,\dots, y_k$ can be constructed.
 \end{proof}

\begin{remark}
%
Note that for a general Fischer cover $\pi\colon  Y\to X$ (even with $X$ AFT), it can happen that neither 
$\mulset(\pi)$ nor $\mulsetdown(\pi)$ is SFT (\cite[pp. 60-61]{mbwk:amsess}).
\end{remark}

\section{The extension theorem}\label{sec:et}

The proof of our extension result 
for flow equivalences of subflows is based on the
following extension result for conjugacies of subshifts,
which is a direct consequence of \cite[Theorem 1.5]{mbwk:ass}. 

\begin{theorem}\cite{mbwk:ass} \label{frombk}
Let $X$ be a mixing SFT with disjoint subshifts $Y$ and $Y'$  such that
\begin{enumerate}
\item There is a conjugacy $\phi\colon Y\to Y'$.
\item For every positive integer $n$, $X\setminus Y$ contains at least two orbits of cardinality $n$.
\end{enumerate}
Then there is a self-conjugacy $\widetilde{\phi}\colon X\to X$ such that
$\widetilde{\phi}|_{Y}=\phi$. 
Moreover, $\widetilde{\phi}$ can be chosen to act trivially on the
dimension group of $X$.
\end{theorem}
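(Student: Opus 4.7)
The statement is presented as a direct consequence of Boyle--Krieger \cite[Theorem 1.5]{mbwk:ass}, so my plan is essentially to unpack how that theorem gives the conclusion, splitting the argument into the existence of an extension and the adjustment of its dimension-group action.

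First I would recall the form of \cite[Theorem 1.5]{mbwk:ass}: it takes a mixing SFT $X$, a pair of disjoint subshifts $Y, Y' \subset X$, a conjugacy $\phi \colon Y \to Y'$, and some orbit-abundance condition ensuring that $X \setminus Y$ has enough room to absorb an extension, and produces a conjugacy $\widetilde\phi \colon X \to X$ extending $\phi$. Our hypotheses (1) and (2) are precisely the form such a theorem requires: the mixing of $X$ provides periodic and non-periodic orbits in abundance, the two-orbit condition in (2) guarantees that for every period $n$ the complement of $Y$ contains the periodic points needed to accommodate any marker construction, and $\phi$ in (1) is the conjugacy to be extended. So a direct citation delivers a self-conjugacy $\widetilde\phi$ of $X$ with $\widetilde\phi|_Y = \phi$. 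I do not expect any subtle hypothesis-checking issue here; it is a matter of matching definitions.

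Second, for the ``moreover'' clause I would adjust $\widetilde\phi$ by an inert automorphism. The automorphism $\widetilde\phi$ induces an automorphism $\widetilde\phi_* \in \mathrm{Aut}(D(X))$ of the dimension group of $X$. The goal is to find an inert automorphism $\psi \in \mathrm{Aut}(X)$ such that $\psi|_Y = \mathrm{id}_Y$ and $(\psi \widetilde\phi)_* = \mathrm{id}$ on $D(X)$, i.e.\ $\psi_* = (\widetilde\phi_*)^{-1}$. Equivalently, we want to realize $(\widetilde\phi_*)^{-1} \in \mathrm{Aut}(D(X))$ by some self-conjugacy of $X$ that restricts to the identity on $Y$. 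To produce $\psi$ we again appeal to \cite[Theorem 1.5]{mbwk:ass}, now applied with $\phi = \mathrm{id}_Y$ (and $Y' = Y$), together with the Kim--Roush type realization of dimension-group automorphisms by elements of $\mathrm{Aut}(X)$ for mixing SFTs; by first realizing $(\widetilde\phi_*)^{-1}$ by some $\psi_0$ on a marker subshift disjoint from $Y$ and then applying the extension theorem to $Y \cup (\text{marker})$, we obtain the required $\psi$. Then $\psi \widetilde\phi$ is the desired extension.

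The main obstacle in this plan is the final coordination step: producing an extension of $\mathrm{id}_Y$ with a prescribed nontrivial action on $D(X)$. One needs enough room in $X \setminus Y$ to house both a marker carrying the desired dimension-group action and the two orbits of each period required by the extension theorem, and one needs to know that every automorphism of $D(X)$ coming from $\mathrm{Aut}(X)$ is realized by an element supported off $Y$. Both of these follow, under hypothesis (2), from the standard flexibility results for mixing SFTs, but they are where the content of the ``moreover'' clause sits and where a careful writeup needs to check that the same assumption (2) is sufficient.
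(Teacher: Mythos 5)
The paper offers no proof of this statement at all: it is presented verbatim as a consequence of Boyle--Krieger \cite[Theorem 1.5]{mbwk:ass}, and---crucially---that theorem already contains the ``Moreover'' clause, because the extension it produces is constructed as a composition of \emph{simple automorphisms}, which are inert by construction and hence act trivially on the dimension group. So the intended reading is that both conclusions, the existence of the extension and its triviality on the dimension group, come directly out of the citation. Your first step is exactly right, and nothing further is needed.

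Your second step is therefore unnecessary, and it also has a gap as written. You propose to correct an arbitrary extension $\widetilde\phi$ by post-composing with an automorphism $\psi$ satisfying $\psi|_Y=\mathrm{id}_Y$ and $\psi_*=(\widetilde\phi_*)^{-1}$, and you call this $\psi$ ``inert''; but an inert automorphism has $\psi_*=\mathrm{id}$ by definition, so such a $\psi$ can be inert only when $\widetilde\phi_*$ is already trivial---the very thing you are trying to arrange. Setting the terminology aside, the substantive issue is that you never justify that $(\widetilde\phi_*)^{-1}$ lies in the image of the dimension representation restricted to automorphisms of $X$ that fix $Y$ pointwise. Applying \cite[Theorem 1.5]{mbwk:ass} with $\phi=\mathrm{id}_Y$ only returns an extension of $\mathrm{id}_Y$ acting trivially on the dimension group, which does not help, and the sketched plan of realizing $(\widetilde\phi_*)^{-1}$ ``on a marker subshift disjoint from $Y$'' and then extending does not obviously yield an automorphism of $X$ with the prescribed global action: realizing an automorphism of the dimension group of $X$ is a statement about $X$, not about a proper subshift, and the range of the dimension representation on the stabilizer of $Y$ is exactly the kind of delicate question one should not leave to a one-line appeal. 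This whole difficulty evaporates once one uses the inert form of the Boyle--Krieger theorem directly, which is what the paper (implicitly) does.
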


We expect the \lq\lq Moreover\rq\rq \  statement of the following theorem
(Theorem \ref{extension-thm}),  regarding the induced isomorphism 
on the {\it isotopy futures group}, $\mathcal F(X)$, 
may be useful 
for future coding arguments,  but we do not use it for the flow
equivalence 
results in the current paper. So we postpone a short review of 
$\mathcal F(X)$  
to the end of this section.

\begin{theorem}\label{extension-thm} (Extension Theorem) 
Suppose $X$ and $X'$ are flow equivalent irreducible SFTs 
with proper subsystems $Y$,$Y'$ which are flow equivalent through 
\[
\phi\colon \maptorus{Y}\to 
\maptorus{Y'}.
\]
Then there is a flow equivalence
\[
\widetilde{\phi}\colon \maptorus{X}\to 
\maptorus{X'}
\]
which agrees with $\phi$ on $\maptorus{Y}$. 

Moreover, given an isomorphism $b\colon 
\mathcal F(X)\to \mathcal F(X')$, 
$\widetilde{\phi}$ can be chosen such that the isomorphism 
$\mathcal F(X)\to \mathcal F(X')$ induced by $\widetilde{\phi}$ equals 
 $b$. 
\end{theorem}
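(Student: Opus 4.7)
The plan is to decompose $\phi$ into a finite chain of elementary moves on shift spaces (conjugacies and symbol expansions) and to extend each move in turn to the ambient SFT, using Lemma \ref{lemma:1} for the expansions and Theorem \ref{frombk} for the conjugacies, assembling the extensions into $\widetilde\phi$. Fix such a decomposition $Y = Y_0 \to Y_1 \to \cdots \to Y_n = Y'$ and build inductively irreducible SFTs $X = X_0, X_1, \ldots, X_n$ with $Y_i \subseteq X_i$, together with flow equivalences $\alpha_i\colon\maptorus{X}\to\maptorus{X_i}$ whose restriction to $\maptorus Y$ realises the composed moves through stage $i$. For a symbol-expansion step $Y_{i+1} = \expansion{a}{Y_i}$, set $X_{i+1} = \expansion{a}{X_i}$ (noting $a\in\alp{Y_i}\subseteq\alp{X_i}$) and apply Lemma \ref{lemma:1} to the identity factor map to obtain $\alpha_{i+1}$. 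For a conjugacy step $Y_i\to Y_{i+1}\subseteq X_i$, take $X_{i+1}=X_i$ and invoke Theorem \ref{frombk} to extend the conjugacy to a self-conjugacy of $X_i$.

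This produces a flow equivalence $\maptorus X \to \maptorus{X_n}$ extending $\phi$, where $X_n$ is an irreducible SFT flow equivalent to $X$, hence to $X'$. To force $X_n$ to equal $X'$, interleave the above move-chain with a move-chain decomposing an a priori chosen flow equivalence $\maptorus X\to\maptorus{X'}$: the interleaved moves act on the ambient SFT while leaving the tracked subsystem in place. Symbol expansions on symbols outside $\alp{Y_i}$ achieve this directly via Lemma \ref{lemma:1}; conjugacies are absorbed by selecting their extension through Theorem \ref{frombk} so that the restriction to $Y_i$ is trivial, which is possible because $Y_i$ is a proper subsystem of the irreducible $X_i$ and one can invoke Theorem \ref{frombk} on a disjoint pair of auxiliary subshifts sharing $Y_i$'s periodic structure.

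The main obstacle is preparing the hypotheses of Theorem \ref{frombk} at each conjugacy step: $X_i$ must be mixing, the two subshifts $Y_i$ and $Y_{i+1}$ must be disjoint inside $X_i$, and the complement of $Y_i$ must contain at least two orbits of every period. Mixing and the periodic-orbit count are arranged by preparatory symbol expansions on $X_i$ supported on symbols outside $\alp{Y_i}\cup\alp{Y_{i+1}}$, which are abundant because $X_i$ is irreducible and $Y_i,Y_{i+1}$ are proper. Disjointness, when it fails, is obtained by a targeted expansion on a symbol distinguishing the two subshifts, or via a preliminary higher-block recoding that separates them at some block length (possible because $Y_i\ne Y_{i+1}$ as subshifts of $X_i$). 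All these preparatory moves are absorbed into the running chain by Lemma \ref{lemma:1}.

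For the \emph{moreover} clause, use the refinement in Theorem \ref{frombk} that each extended self-conjugacy can be chosen to act trivially on the dimension group, hence trivially on the isotopy futures group $\mathcal F(X)$. Every conjugacy step in the construction then contributes trivially to the induced action of $\widetilde\phi$ on $\mathcal F$, so the total induced action of $\widetilde\phi$ equals that of the initial flow equivalence $\maptorus X\to\maptorus{X'}$ used in the interleaving. By the review of $\mathcal F$ that follows this theorem, every isomorphism $b\colon\mathcal F(X)\to\mathcal F(X')$ is realised by some flow equivalence of mapping tori; beginning the construction with one realising $b$ yields a $\widetilde\phi$ whose induced isomorphism on $\mathcal F$ is exactly $b$.
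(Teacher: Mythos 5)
Your overall strategy---decompose the flow equivalence into Parry--Sullivan moves and extend the conjugacy steps via Theorem \ref{frombk}---is the right one and is close in spirit to the paper's proof, but two of your preparatory claims fail, and they are precisely where the real work lies. First, \emph{disjointness}: if $Y_i$ and $Y_{i+1}$ intersect as subshifts of $X_i$ (they may even coincide, e.g.\ when $\phi$ is a nontrivial self flow equivalence of $\mt{Y}$), then no symbol expansion and no higher-block recoding of the ambient shift will separate them, since both operations carry a common point of $Y_i\cap Y_{i+1}$ to a common point of the two images. Second, the \emph{periodic-orbit hypothesis} of Theorem \ref{frombk}: neither a conjugacy nor a symbol expansion of $X_i$ creates new periodic orbits (expansion merely relabels orbits and changes their lengths), so if, say, the unique fixed point of $X_i$ lies in $Y_i$, no sequence of expansions ``on symbols outside $\alp{Y_i}$'' will produce two orbits of every cardinality in $X_i\setminus Y_i$. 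Both obstacles are overcome in the paper by Lemma \ref{makeroom}: via Krieger's Embedding Theorem and Nasu's Masking Lemma one passes, by a flow equivalence restricting on the tracked subsystem to the mapping torus of a conjugacy, to an ambient SFT containing a large full-shift corner; there one installs a disjoint copy of $Y_{i+1}$ together with a copy of the $3$-shift (which supplies the two orbits of every cardinality), and an inert automorphism furnished by Theorem \ref{frombk} itself carries $Y_{i+1}$ onto that disjoint copy. Your proposal has no substitute for this machinery.

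Two further points. The Parry--Sullivan decomposition realizes $\phi$ only up to composition with a flow equivalence of $\mt{Y}$ isotopic to the identity; to arrange $\widetilde{\phi}|_{\mt{Y}}=\phi$ exactly, that discrepancy must itself be extended over $\mt{X}$, which requires the isotopy-extension result \cite[Proposition 7.1]{bce:fei} and is not automatic. Also, your interleaving device for forcing $X_n=X'$ does not, as stated, leave the restriction to the tracked subsystem unchanged: the moves decomposing a flow equivalence $\mt{X}\to\mt{X'}$ act on $Y_i$ by the restriction of that equivalence, not by the identity, and the conjugacies occurring in that chain are given maps between distinct SFTs rather than self-conjugacies you are free to modify. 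The clean fix is the paper's Step 1: compose once with $\psi^{-1}$ for a fixed flow equivalence $\psi\colon\mt{X}\to\mt{X'}$ and reduce to the case $X=X'$ at the outset. Your treatment of the ``moreover'' clause is essentially correct once the automorphisms from Theorem \ref{frombk} are chosen inert and once one cites the realization of automorphisms of $\mathcal F(X)$ by self flow equivalences from \cite[Theorem 7.13]{mb:fesftpf}.
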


The condition of properness is necessary, as a proper subshift $X'\subset X$ may be flow equivalent to $X$ without the embedding extending.

\begin{proof}[Proof of Theorem \ref{extension-thm}] 
We will prove the theorem in several steps. Every step but the last is
a reduction to a special case. We give a complete proof of the
existence 
of $\widetilde{\phi}$ before discussing the action on 
the isotopy futures group. We will use Lemma \ref{makeroom} which we state and prove just after this proof.

{\it Step 1: Reduction to the case $X=X'$. } 

Let $\psi \colon  \mt X \to \mt X'$ be a flow equivalence. 
Suppose there is a flow equivalence $\kappa \colon  \maptorus X \to \maptorus X$ 
which is an extension of the flow equivalence 
$\psi^{-1}  \phi\colon  \mt Y\to \mt Y$.
Then  $\psi   \kappa \colon  \mt X \to \mt
X'$ is a flow equivalence extending 
$\phi\colon \mt Y \to \mt Y'$. 

So from here,  without loss of generality we assume $X=X'$. Also, 
set $Y_1=Y$ and $Y_2 = Y'$. 

{\it Step 2: Reduction to the case $X$ is mixing.} 

There is a flow equivalence $\gamma \colon  \mt X \to \mt X''$ where $X''$ is a 
mixing SFT (this is most easily seen by appealing to \cite{jf:fesft}). 
As in Step 1, 
if we can find a flow equivalence $\mt X''\to \mt X''$ extending 
$\gamma\phi\gamma^{-1}\colon  \mt (\gamma Y_1) \to \mt(\gamma Y_2)$, 
then we can pull it back to a flow equivalence 
$\mt X \to \mt X$ extending $\phi$. 

{\it Step 3: Reduction to the case 
$X$ contains a subshift $Y_3$ conjugate to the 3-shift, and 
the subshifts $Y_1,Y_2,Y_3$ of $X$  are 
pairwise disjoint.} 

Given  $N$ and $W= Y_1 \cup Y_2$ in $X$, 
we take $\gamma$, $g$ and $B$ 
as given by 
Lemma \ref{makeroom}.  
We choose  $N$ large enough that 
a disjoint union of the subshift 
$Y_2$ and two copies of the 3-shift 
embed in $X_N$ (the full $N$-shift). 
The subshift $g(Y_2)$ is conjugate to
$Y_2$, so there is an  
embedding $f$ of $g(Y_2)$ 
into $X_{B(22)}= X_N$ with image disjoint from a copy 
$Y_3$ of the 3-shift contained in $X_N$. 

By Theorem \ref{frombk}, 
there is  an automorphism  $k$ of $X_B$ which 
restricts to $f$ on $g(Y_2)$ and restricts to 
the identity on $Y_3$. (The extra copy of the 
3-shift provides enough periodic orbits.) 
Let $\kappa = \mt k\colon  \mt X_B \to \mt X_B$.  
Now $\mt (g(Y_1))$,  $\mt( \kappa g(Y_2) )$ and $\mt Y_3$ 
are pairwise disjoint in $\mt X_B$. 

Suppose we can find a flow equivalence 
$\alpha\colon  \mt X_B \to \mt X_B $ 
which extends the flow equivalence 
$\kappa \gamma \phi \gamma^{-1} \colon  \mt (g(Y_1)) \to \mt (\kappa g
(Y_2))$. 
Then $
\gamma^{-1}
\kappa^{-1} 
 \alpha \gamma $ 
 is a flow equivalence
$ \mt X \to \mt X $  which extends $\phi$. 
So, 
without loss of generality we
may assume that $X$ contains a subshift $Y_3$ conjugate 
to the 3-shift, with 
$Y_1,Y_2 $ and $Y_3$ pairwise disjoint.

{\it Step 4: Reduction to the case $\phi =\mt h\colon  Y_1\to Y_2$, with 
$h\colon  Y_1\to Y_2$ a topological conjugacy.} 

By the Parry-Sullivan argument in \cite{parrysullivan} 
(see \cite[Theorem 4.2]{bce:fei} for full details), there are closed sets 
$D_1\subseteq Y_1$ and $D_2 \subseteq Y_2$; a homeomorphism 
$h\colon  D_1\to D_2$; and a flow equivalence $\epsilon\colon  \mt Y_1 \to 
\mt Y_1$ isotopic to the identity such that the following hold: 
\begin{enumerate} 
\item For $k=1,2$: $D_k$ is a cross section of $\mt Y_k$
\item The restriction $(\phi\epsilon)|_{D_1}$ is a topological conjugacy of the return maps $\rho_i \colon  D_i \to D_i$
(i.e. $\phi\epsilon(D_1)=D_2$ and $(\phi\epsilon)|_{D_1} \rho_1 = \rho_2 (\phi\epsilon)|_{D_1}$).  
\end{enumerate} 

Let $k$ be 1 or 2. Then $D_k$ is a cross section for $\mt Y_k$ 
and $D_k\subseteq Y_k$. Therefore, $D_k$ is clopen in $Y_k$ 
and $Y_k$ is a discrete tower over $D_k$. For $x\in D_k$, 
define $\tau_k (x) = \min \{ j>0\colon  \sigma^j(x) \in D_k \}$, 
the first return time to $D_k$.
Set $T_k= \max \tau_k$ and 
$D_k(j)=\{ x\in D_k\colon  \tau_k (x) = j\}$.  
Then 
\[
\{ \, \sigma^i (D_k(j))\colon  1\leq j \leq T_k, \, 0\leq i < j\} 
\]
is a clopen partition of $Y_k$ (some of the sets  $D_k(j)$ might be
empty). For $1\leq j \leq T_k$, 
choose $\widetilde{D}_k(j)$ a clopen subset  of $X$ 
such that 
$D_k(j) \subseteq \widetilde D_k (j)$ and 
$\{\sigma^i (\widetilde{D}_k(j))\colon  1\leq j \leq T_k, \, 0\leq i < j\}$
is a collection of 
 pairwise disjoint sets.  
Set $\widetilde D_k = \bigcup_{j=1}^{T_k} \widetilde D_k(j)$ and 
$\widetilde E_k = \bigcup_{j=1}^{T_k} \bigcup_{i=1}^{j-1} \sigma^i (\widetilde D_k
(j))$. 
Because the subshifts $Y_1, Y_2, Y_3$ are pairwise disjoint, 
we may also require 
\begin{equation*} 
\big( \widetilde{D}_1 \cup \widetilde{E}_1 \big) 
\cap 
\big( \widetilde{D}_2 \cup \widetilde{E}_2 \big)  = \emptyset,
\end{equation*}
and
\begin{equation*}
\Biggl( \bigcup_{k=1}^2 \bigl(\widetilde{D}_k \cup \widetilde{E}_k \bigr)\Biggr)  
\cap Y_3= \emptyset. 
\end{equation*} 
Define 
\begin{equation*} 
\widetilde{D}_k = 
X\setminus \bigcup_{k=1}^2 \bigl(\widetilde D_k \cup \widetilde E_k\bigr),
\end{equation*}
and
\begin{equation*}
\widetilde D =\bigcup_{k=1}^3\widetilde D_k. 
\end{equation*} 
Now $\widetilde D$ is a discrete cross section for $X$ 
(and a cross section for $\mt X$).  
Let $\widetilde{\rho} \colon  \widetilde D \to \widetilde D$ be the 
return map under the shift $\sigma$ (or equivalently, 
under the suspension flow 
on $\mt X$). 
If, for $k\in\{1,2\}$, $x\in D_k$, then $\widetilde{\rho}(x)=\rho_k(x)$, and if $x\in Y_3$, then $\widetilde{\rho}(x) = \sigma (x)$. 

Because $\widetilde D$ is a discrete cross section of the 
SFT $X$, there is a shift of finite type $X''$ 
and a flow equivalence $\mt X \to \mt X''$ such that $\gamma|_{\widetilde D}$ is a topological conjugacy from $(\widetilde D,\widetilde\rho)$ to $X''$.
For $k\in \{1,2\}$, set $Y''_k = \gamma (D_k)$, and set $Y''_3=\gamma(Y_3)$.
Because $Y_3 \subseteq \widetilde D$,  
the restriction $\gamma|_{Y_3} \colon  Y_3 \to Y_3''$ is a topological 
conjugacy of subshifts, and $Y_3''$ is conjugate to the 
3-shift. 

Let $\phi'' = \gamma \phi\epsilon \gamma^{-1} \colon 
\mt Y''_1 \to \mt Y''_2$ and $h''=\phi''|_{Y''_1}$. Because $\gamma|_{D_k}$ conjugates $(D_k,\rho_k)$ and $Y''_k$, and $\phi\epsilon$ conjugates $(D_1,\rho_1)$ and $(D_2,\rho_2)$, it follows that $h''$ 
defines a conjugacy of shifts 
$Y''_1 \to Y''_2$. There is therefore a flow equivalence $\epsilon''\colon  \mt Y''_1 \to 
\mt Y''_1$ isotopic to the identity such that $\phi''\epsilon''=\mt h''$.
If we can find $\kappa \colon  \mt X'' \to \mt X''$  a flow equivalence 
extending  $\phi''\epsilon''=\mt h''$, then we have that
$\gamma^{-1}\kappa \gamma \colon  \mt X \to \mt X$ is a 
flow equivalence extending $\phi\epsilon\gamma^{-1}\epsilon''\gamma$. Since $\epsilon\gamma^{-1}\epsilon''\gamma\colon \mt Y_1\to\mt Y_1$ is a flow equivalence which is isotopic to the identity, it follows from \cite[Proposition 7.1]{bce:fei} that there is a flow equivalence $\widetilde{ \epsilon}\colon  \mt X \to \mt X$ which is isotopic to the identity such that 
$\widetilde{\epsilon}$ equals $\epsilon\gamma^{-1}\epsilon''\gamma$ on $\mt Y_1$. Thus, $\gamma^{-1}\kappa \gamma\widetilde{\epsilon}^{-1}\colon  \mt X \to \mt X$ is a 
flow equivalence extending $\phi$.

So without loss of generality, in the next step we may assume  
there is a topological conjugacy of subshifts $h\colon Y_1 \to Y_2$ 
such that 
$\phi = \mt h\colon  Y_1 \to Y_2$ . 

{\it Step 5: Appeal to Extension Theorem for conjugacy.} 

We have  $\phi = \mt h \colon  \mt Y_1 \to \mt Y_2$ with 
$h\colon  Y_1 \to Y_2$ a conjugacy of subshifts of the 
mixing SFT $X$.  
For every $k$ in $\N$ the set 
$X\setminus Y_1 $ contains at least 
two $X$-orbits of cardinality $k$ 
(because this set contains $Y_3$, a copy of 
the 3-shift).  It follows from Theorem \ref{frombk} 
that 
$h$ extends to a conjugacy $k\colon X\to X$. Then the flow 
equivalence $\mt k \colon \mt X \to \mt X$ is an extension of 
$\phi = \mt h\colon  Y_1 \to Y_2 $.  This finishes the proof that 
the flow equivalence extending $\phi$ exists.

Now we turn to the ``Moreover'' claim. We assume
the background given at the end of this section. 
For a flow  equivalence $\beta$,  we use $[ \beta ] $ 
to denote the induced isomorphism of isotopy futures groups 
(an automorphism if $\beta $ is a self equivalence).  
We claim that the extension 
$\widetilde{\phi}$ produced in Steps 2-5 acts trivially on
$\mathcal F(X)$, 
 for the following 
reasons. 
\begin{enumerate} 
\item 
  The automorphisms $k$  of Steps 3 and 5, provided by
  Theorem \ref{frombk}, are chosen to be inert, so 
$[\mt k ] = \text{Id}$.  
\item 
If $[\beta ] = \text{Id}$, then 
$[\gamma \beta \gamma^{-1}] = [\gamma ][ \beta ][\gamma^{-1} ] =
\text{Id}$. 
\item 
For a flow equivalence $\widetilde{\epsilon} $ isotopic to the identity, 
$[\widetilde{\epsilon} ]  = \text{Id}$. 
\end{enumerate} 

Now suppose 
$ b \colon  \mathcal F(X) \to \mathcal F(X')$ is an isomorphism of  
isotopy futures groups and we 
want the flow equivalence $\mt X\to \mt X' $ 
extending  $\phi\colon  \mt Y \to \mt Y'$  to
induce the isomorphism $ b$. 
Let $\psi\colon \mt X\to \mt X'$  be a flow equivalence.
 Let $a\colon  \mathcal F(X)\to \mathcal F(X)$ be
an automorphism such that $b = [\psi ] a$.
By 
\cite[Theorem 7.13]{mb:fesftpf}, there is a flow equivalence 
$\alpha \colon  S_X\to S_X$ such that 
$[\alpha ] = a$.
Now $\alpha^{-1}\psi^{-1} \phi$ defines a flow equivalence
from $\mt Y $ to a submapping torus of $\mt X$. 
Apply the argument of Steps 2-5 to 
extend this to  a flow equivalence $\gamma \colon  \mt X \to \mt X$
such that $[\gamma ] = \text{Id}$ . 
Define  $\widetilde{\phi}=\psi \alpha \gamma$.
Then $\overline{\phi}$
is a flow equivalence $  \mt X \to \mt X' $ 
extending $\phi $ such that  
$[\widetilde{\phi} ]= [\psi][ \alpha ] = [\psi ] a =b$. 
This completes the proof.

\end{proof}

Now comes the lemma that we use in the proof of Theorem \ref{extension-thm}.

\begin{lemma}\label{makeroom} 
Suppose $X$ is a mixing SFT and $W$ is a proper subshift of $X$ and $N$ is a 
positive integer. Then there is a primitive matrix $B$ with 
$2\times 2$ block form $B =
\left( \begin{smallmatrix} B(11) & B(12) \\ B(21) & N 
\end{smallmatrix} \right)$  and a flow equivalence 
$\gamma \colon  \maptorus X \to \maptorus X_B$ and 
a topological conjugacy $g$ from $ W$ to a subshift
of $X_{B(11)}$ such that 
the restriction of $\gamma $ to $\mt W$ equals $\mt g$. 
\end{lemma}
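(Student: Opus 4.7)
The idea is to pass to a higher block presentation of $X$ in which some vertex $v_0$ is missed by $W$, then use Parry--Sullivan flow equivalence moves supported near $v_0$ to reshape the local structure until the adjacency matrix has the required $2\times 2$ block form with scalar $(2,2)$-entry $N$.

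Because $W$ is a proper subshift of the mixing SFT $X$, there is a word $p\in\lang(X)\setminus\lang(W)$. Replacing $X$ by its $|p|$-block presentation via a topological conjugacy $\phi$ (itself a flow equivalence), we may assume $X=X_A$ with $A$ primitive and with a vertex $v_0$ corresponding to $p$ that is missed by $\phi(W)$. Ordering $v_0$ as the last vertex gives
\[
A \;=\; \begin{pmatrix} C & d \\ e^T & f \end{pmatrix},
\]
with $\phi(W)\subseteq X_C$. The task is now to transform $A$ into the form $B=\bigl(\begin{smallmatrix} B(11) & B(12) \\ B(21) & N \end{smallmatrix}\bigr)$ via a flow equivalence whose restriction to $\maptorus\phi(W)$ is the suspension of a conjugacy.

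Two types of moves are available. (i)~Symbol expansion at an edge incident to $v_0$: by Lemma \ref{lemma:1} this is a flow equivalence, and since the expanded symbol does not appear in $\phi(W)$ it can be realized as the identity on $\maptorus\phi(W)$. Expanding a self-loop at $v_0$ trades it for a $2$-cycle through a new intermediate vertex (absorbed into the first block), decreasing the self-loop count at $v_0$ by one. (ii)~State splittings and amalgamations, which are topological conjugacies (and hence, on $W$, produce at worst a change by conjugacy). By in-splitting and out-splitting a vertex $u$ that lies on a direct $2$-cycle $v_0\leftrightarrow u$, one can isolate a copy $u'$ whose only in-edge is from $v_0$ and whose only out-edge is to $v_0$; amalgamating $u'$ into $v_0$ produces a new self-loop at $v_0$. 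When no direct $2$-cycle $v_0\leftrightarrow u$ is present, primitivity of $A$ guarantees a longer cycle through $v_0$, which can first be shortened to a $2$-cycle by symbol expansions followed by suitable splittings.

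The main obstacle is the combinatorial bookkeeping needed to drive the self-loop count at the evolving image of $v_0$ to exactly $N$ while keeping the resulting matrix primitive and keeping all the new vertices (the intermediates created by the symbol expansions and the extra copies left over from the splittings) outside the evolving image of $v_0$, so that they end up in the first block. Primitivity is preserved as long as we keep at least one edge in each of $d'$ and $(e')^T$ in the intermediate matrices, which is easily arranged. Once all the moves have been carried out, take $B$ to be the resulting primitive matrix, let $g\colon W\to X_{B(11)}$ be the composite conjugacy obtained by concatenating $\phi$ with all the state splittings and amalgamations, and let $\gamma\colon\maptorus X\to\maptorus X_B$ be the composite flow equivalence. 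By construction $\gamma|_{\maptorus W}=\maptorus g$.
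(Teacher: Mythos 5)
Your route is genuinely different from the paper's. The paper embeds a disjoint union $X_E$, with $E=\left(\begin{smallmatrix} C & 0\\ 0 & Q\end{smallmatrix}\right)$, $X_C$ conjugate to a mixing $W'$ with $W\subseteq W'\subset X$, and $Q$ a $k$-cycle carrying weight $N$ on one edge, into $X$ via Krieger's Embedding Theorem (arranged to extend the given conjugacy on $W'$); it then realizes $E$ as a principal submatrix of a presentation of $X$ by Nasu's Masking Lemma, and finally collapses the $N$-weighted $k$-cycle to a single vertex with $N$ self-loops by explicit column additions on $I-A$ which fix the complementary submapping torus. Your proposal instead isolates a vertex $v_0$ missed by $W$ and adjusts its self-loop count by local moves; this is essentially the alternative the paper itself flags in the remark following the lemma (via Franks's flow equivalence arguments), so the strategy is legitimate. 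What the paper's route buys is that the target block $[N]$ is built abstractly and imported wholesale, so no combinatorial bookkeeping at $v_0$ is ever needed.

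Two points need attention before your plan is a proof. First, ``amalgamating $u'$ into $v_0$'' when $u'$ has a unique in-edge from $v_0$ and a unique out-edge to $v_0$ is not an in/out-amalgamation (those are conjugacies merging vertices arising from a common splitting); it is the inverse of symbol expansion, a Parry--Sullivan contraction, which is only a flow equivalence. This is harmless for the statement -- it is the identity on the image of $\mt W$ because $u'$ is reachable only from $v_0$ and $W$ avoids $v_0$ -- but it must be invoked as a flow equivalence, not filed under the conjugacy moves. Second, the inductive step you defer as ``bookkeeping'' is the entire content of this approach: you must verify that after each splitting-and-contraction round there remains a simple cycle of length at least $2$ through $v_0$ (true, by irreducibility, as long as some vertex other than $v_0$ survives, which $W\neq\emptyset$ guarantees), that the self-loop count can be driven to exactly $N$ from either side, that irreducibility is preserved throughout, and that every auxiliary vertex created is reachable only through $v_0$ so that the evolving image of $W$ never meets it and lands in $X_{B(11)}$. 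None of these checks is difficult, but as written the proposal stops exactly where the work begins.
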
 
\begin{proof} 
Given integers $k\geq 2$ and $N$, let $Q_{k,N}$ be the $k\times
k$ matrix  $Q$ such that 
$
Q(i,i+1) = 1\ , \ 1\leq i < k$ ; 
$Q(k,1) = N$; and 
$Q(i,j) = 0 \  \text{otherwise}$. 

Let $W'\neq X$ be a mixing SFT such that $W\subseteq W'\subset X$. Let 
$C$ be a primitive matrix such that $X_C$ is topologically conjugate
to $W'$.  
Given $N$, fix $k$ such that with 
$Q=Q_{k,N}$  and $E= 
\left( \begin{smallmatrix} C & 0 \\ 0 & Q 
\end{smallmatrix} \right)$, 
it follows from Krieger's Embedding Theorem that there 
is an embedding $\eta\colon  X_E \to X$ with $\eta (X_E)\neq X$.   
Using  a   modification of Krieger's Embedding 
Theorem proof \cite[Remark p.548]{B-LowerEntropyFactors}, 
we  require $\eta$ to be an
extension of the given 
conjugacy from $X_C$ to  $W'$. 

Next we let $\gamma$ be a conjugacy from $X$ to 
a higher block presentation $X_D$ of $X$, where $D$ is a primitive matrix with a principal 
submatrix $D(1)$ such that $\gamma$ maps $\eta (X_E)$ onto 
$X_{D(1)}$.  

Next we will appeal to Nasu's Masking Lemma which can be stated and proved in terms of
graphs (as in \cite[Lemma 3.18]{Nasumask} and \cite[Sec. 10.2]{dlbm:isdc})
or matrices (as in \cite[Appendix 1]{BHspectra}).
The matrix statement gives that if $M$ is a principal submatrix
of a square matrix $A$ over $\Z_+$, and a strong shift equivalence
over $\Z_+$ from $M$ to a matrix $M'$ is given, then it
can  be extended to a strong shift equivalence over $\Z_+$
from $A$ to some matrix in which  $M'$ is a principal submatrix. 
As a consequence in our case, 
there is a
 primitive 
matrix $A$ with block form 
\[
A = 
\begin{pmatrix} A(11) & A(12) & A(13) \\ 
A(21) & A(22) & A(23) \\ 
A(31) & A(32)& A(33) 
\end{pmatrix}, 
\]  
with 
\[ 
\begin{pmatrix} 
A(22) & A(23) \\ 
A(32)& A(33) 
\end{pmatrix} =
\begin{pmatrix} 
C & 0 \\ 0 & Q 
\end{pmatrix} =
E, 
\] 
such that there is 
a topological conjugacy $h\colon  X \to X_A$
such that the following holds. Identify 
 $$X_{\left( \begin{smallmatrix} A(22) & A(23) \\ A(32) & A(33)  
\end{smallmatrix} \right)}$$ and $X_E;$ then the 
restriction  of $h$ to  $X_{D(1)}$
is   
$(\gamma \eta)^{-1}$.  It follows that  $h(W')=X_{A(22)} \subset X_A$, 
so 
$h$ maps $W$ into $X_{A(22)} $. 

%
%
%

Let $A$ be $m\times m$. Set $A(1)=A$. 
In the order $j=1,2,\dots , k-1$ define 
$I-A(j+1)$ to be the matrix obtained from 
$I-A(j)$ by adding column $m-k+j$ of 
$I-A(j)$ to column $m-k+j+1$. For each $j$, 
this is a positive matrix equivalence giving a flow equivalence 
$\phi_j\colon  \maptorus X_{A(j)} \to  \maptorus X_{A(j+1)}$ 
which is the identity on the submapping torus $SX_M$,  where 
$$M=
\left( \begin{smallmatrix} A(11) & A(12) \\ A(21) & A(22)  
\end{smallmatrix} \right).$$ 
Set $B= A(j+k-1)$. 
Then $B(m,m) = N$  and the composition 
\[
\phi_{k-1}\cdots \phi_2\phi_1(\maptorus h) 
(\maptorus \gamma)\colon  \maptorus X \to \maptorus
X_B 
\] 
is the desired flow equivalence. 
\end{proof} 

\begin{remark} There is an alternate proof of Lemma \ref{makeroom} 
which constructs $B$ using a sequence of flow equivalence 
arguments from the proofs of 
\cite[Lemma 2.1, Corollary 2.3, Theorem 2.4]{jf:fesft}. 
\end{remark} 

We turn now to a brief review of the isotopy futures group. 
 
There is a homomorphism (the dimension representation) $\rho_A$ 
from the automorphism group of an SFT $X_A$
to the group of automorphisms of its dimension group.
An automorphism 
in the kernel of $\rho_A$ is called {\it inert}; 
it acts by the identity on the dimension group.
We are using the dimension group built from
left infinite rays; for background on this, see 
\cite{mbdldr:agsft}.

The {\it mapping class group} of an SFT $X_A$  is the group of
orientation preserving  homeomorphisms 
of its mapping torus $\maptorus{X_A}$, modulo isotopy.
There is a group associated to 
$\maptorus{X_A}$ which is the flow equivalence analogue of the 
association of the dimension group to $X_A$: the 
{\it isotopy futures group}, $\mathcal F(X_A)$. 
This group is the free abelian group with generators the set of 
rays $x(-\infty,n]$, $x\in X_A$ and $n\in \Z$, given certain relations. 
The map which sends a ray  $x(-\infty,n]$ to the vector $e_j$ such that 
$j$ is the terminal vertex of $x_n$ induces an isomorphism from 
$\mathcal F(X_A)$ to $\cok(I-A)$. 
The construction is very similar  to Krieger's construction of the
dimension group out of rays.  
There is also a flow analogue of the dimension representation: 
a flow equivalence $\mt X_A \to  \mt X_A$,
by its action on finite unions of rays, induces an automorphism 
of  $\mathcal F(X_A)$. 
An inert automorphism $U$ of 
$X_A$ induces a flow equivalence of $\maptorus{X_A}$ which 
acts by the identity on $\mathcal F(X_A)$,  
because the action of $\mt U$ on $\mathcal F(X_A)$ 
factors through the action of $U$ the dimension group. 

See \cite[Section 7]{mb:fesftpf} for 
the development of isotopy futures  theory, and
\cite{bce:fei} for more on isotopy and the mapping class
group of a shift of finite type. 


\section{The reduction theorem for AFT shifts}\label{sec:rt}

We are ready to state our main result which reduces the question of
flow equivalence of AFT sofic shifts to a question of flow equivalence
of certain covers.


\begin{defi} \label{def:AFTdef}
The shift space $X$ is said to be  \emph{almost finite type} if
there is an irreducible subshift of finite type $Y$ and
a factor map $\pi\colon Y\to X$ that is one-to-one on a nonempty open set.
\end{defi}

The AFT shifts, 
originally introduced by 
 Marcus \cite[Definition 4]{bm:ssed}
to address 
practical coding problems,  
 have emerged 
as a natural and large class of relatively tractable sofic 
shifts \cite[Sec.13]{dlbm:isdc}. AFT shifts have a variety of 
characterizations, 
collected below in Theorem 
\ref{thm:aftconditions}. 


\begin{theorem} \label{thm:aftconditions}
Let $X$ be an irreducible, strictly sofic shift.  The following are equivalent
\begin{enumerate}[(i)]
\item  The shift $X$ is AFT.
\item  The left Fischer cover of $X$ is right-closing.
\item  The right Fischer cover of $X$ is left-closing.
\item  $X$ has a minimal cover (i.e.\ an SFT $Y$ and a factor map
  $\pi\colon Y\to X$ such that any other factor map $\phi\colon Y' \to X$ (from an SFT $Y'$ onto $X$) must factor through $\pi$) 
\cite{bkm,WilliamsMinimal}. 
(This cover must be conjugate to the 
left and right Fischer covers.) 
\item The right and left Fischer covers of $X$ are topologically 
conjugate as factor maps.  
\item $X$ is the factor of an SFT by a biclosing map. 
\item The multiplicity set of its (left or right) Fischer cover 
is a proper subshift of the domain. 
\end{enumerate}
\end{theorem}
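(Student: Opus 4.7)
The plan is to assemble these equivalences from standard sofic shift theory, using biclosing factor maps and the Fischer cover as the unifying tools. The foundational step is (i) $\Leftrightarrow$ (vi), which is essentially Marcus's characterization \cite{bm:ssed}: a factor map from an irreducible SFT onto an irreducible sofic shift is injective on a nonempty open set if and only if it is biclosing. This converts the abstract AFT definition into a working structural condition on a cover.

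For (ii), (iii), and (v), I would invoke Krieger's uniqueness theorem for Fischer covers \cite{KriegerSoficI}. Given any biclosing presentation $\phi \colon Y' \to X$, standard state-merging arguments show that the left Fischer cover (as the minimal left-resolving presentation of $X$) inherits the right-closing property from $\phi$, yielding (ii); the time-reversal symmetry gives (iii), and combining the two shows that both Fischer covers are biclosing minimal presentations, hence conjugate as factor maps by Krieger's uniqueness, giving (v). Condition (iv) is the content of \cite{bkm, WilliamsMinimal}: the minimal SFT cover exists precisely when $X$ is AFT, and in that case it coincides with both Fischer covers via the universal property and (v).

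Finally, (vii) is essentially the definition of AFT read through the Fischer cover. If $\pi \colon Y \to X$ is the Fischer cover and $\mulset(\pi)$ is a proper subshift of $Y$, then $Y \setminus \mulset(\pi)$ is a nonempty open set on which $\pi$ is injective, so $X$ is AFT by the definition applied to the irreducible SFT $Y$. Conversely, if $X$ is AFT, then by (iii) $\pi$ is left-closing, so $\mulset(\pi)$ is automatically closed by the standard compactness argument for closing finite-to-one factor maps; and magic words in the language of $X$ provide an open dense subset of $Y$ on which $\pi$ is injective, forcing $\mulset(\pi) \neq Y$. The main technical obstacle, I expect, is the state-merging argument underlying (ii) --- verifying that right-closingness genuinely survives the passage from an arbitrary biclosing presentation to the minimal left-resolving one. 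Once this is secured, the remaining implications follow cleanly from the references and the elementary analysis of the multiplicity set.
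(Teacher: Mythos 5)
The paper never proves this theorem: it is stated as assembled background, with the substantive content attributed in the statement itself to \cite{bm:ssed}, \cite{bkm} and \cite{WilliamsMinimal} (and, for item (vii), to the analysis in \cite{mbwk:amsess}). So there is no in-paper argument to compare against; your proposal has to be judged as a reconstruction from the literature. Most of it is a fair reconstruction --- in particular your treatment of (vii) is correct and essentially complete: for a biclosing finite-to-one map the diagonal is open in $E(\pi)=\{(w,x)\colon \pi(w)=\pi(x)\}$ (two distinct points with the same image and a long common central block would be forced equal by the uniform closing bounds), so $\mulset(\pi)$ is the first-coordinate projection of the compact set $E(\pi)\setminus\Delta$ and hence closed, and properness comes from synchronizing words. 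Likewise, deferring the heart of (i) $\Rightarrow$ (ii)/(iii) to the state-merging argument of \cite{bkm} is exactly what the paper implicitly does.

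There is, however, one genuinely false step: the ``foundational'' biconditional you attribute to Marcus --- that a factor map from an irreducible SFT onto an irreducible sofic shift is injective on a nonempty open set \emph{if and only if} it is biclosing --- fails for individual factor maps. Compose the Fischer cover $\pi\colon Y\to X$ of any strictly sofic AFT shift with a connected $2$-point extension $\rho\colon Z\to Y$; then $\pi\rho$ is a biclosing factor map from an irreducible SFT onto $X$ under which every point of $X$ has at least two preimages, so $\pi\rho$ is injective on no nonempty open set. Consequently (i) $\Leftrightarrow$ (vi) cannot be dispatched as a standalone first step by comparing properties of a single arbitrary cover; the equivalence is one of \emph{existence} of covers, and the only route to (vi) $\Rightarrow$ (i) is through the Fischer cover: one must first show that the existence of \emph{some} biclosing SFT cover forces the (degree-one, resolving) Fischer cover to be biclosing --- i.e., establish (ii)/(iii) --- and then use your own (vii)-argument to conclude that this degree-one biclosing cover is injective off the closed proper set $\mulset(\pi)$. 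In other words, the logical order must be (vi) $\Rightarrow$ (ii)/(iii) $\Rightarrow$ (vii) $\Rightarrow$ (i), not (vi) $\Leftrightarrow$ (i) first; with that reordering (and the \cite{bkm} input you already flag as the main obstacle), the rest of your outline goes through.
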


We draw the reader's attention to (vii) in particular, since it will allow us to work with 
multiplicity sets such as $\mulsetdown(\pi)$  and
$\mulset(\pi)$ as shift spaces in their own right. 
Because the left and right Fischer covers of an AFT shift
  are 
conjugate, 
we may (when concerned only with the conjugacy class of the 
Fischer cover) refer to {\it the} Fischer cover of an AFT
shift.

\begin{theorem} \label{theorem:reduction} (Reduction Theorem) 
For $i=1,2$, let 
$\pi_i \colon Y_i \to X_i$ be the Fischer 
cover of an 
AFT shift $X_i$. 
Then the
  following are equivalent: 
  \begin{enumerate}
  \item $X_1$ and $X_2$ are flow equivalent. \label{item:1}
  \item The two factor maps $\pi_{1}$ and
    $\pi_{2}$ are flow equivalent factor maps. \label{item:2}
  \item $Y_1$ and $Y_2$ are flow equivalent and the
    restricted factor maps $(\pi_{1})\rM$ and
    $(\pi_{2})\rM$ are flow equivalent factor maps. \label{item:3} 
  \end{enumerate}
\end{theorem}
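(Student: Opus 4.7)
The plan is to prove the easy implications $(2) \Rightarrow (1)$, $(1) \Rightarrow (2)$, and $(2) \Rightarrow (3)$ quickly, and then concentrate on $(3) \Rightarrow (2)$. The implication $(2) \Rightarrow (1)$ is immediate from the definition of flow equivalence of factor maps. For $(1) \Rightarrow (2)$ I would invoke Theorem \ref{theorem:prereduction} directly, since the right Fischer cover is canonical (Krieger) and respects symbol expansion (Proposition \ref{prop:fischer}). For $(2) \Rightarrow (3)$ I would argue as in the proof of Corollary \ref{corollary:prereduction} that the intertwining pair $(\alpha,\beta)$ given by $(2)$ must respect the multiplicity stratification --- i.e., $\alpha$ carries $\mt{\mulset(\pi_1)}$ onto $\mt{\mulset(\pi_2)}$ and $\beta$ carries $\mt{\mulsetdown(\pi_1)}$ onto $\mt{\mulsetdown(\pi_2)}$, because the multiplicity of a flow line under the cover is visible from the commuting square of mapping tori. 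The restrictions then witness $\FE{(\pi_1)\rM}=\FE{(\pi_2)\rM}$, while $\alpha$ itself shows $Y_1$ and $Y_2$ are flow equivalent.

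My strategy for $(3) \Rightarrow (2)$ is \emph{extend, then descend}. Given flow equivalences $\alpha\colon \mt{\mulset(\pi_1)} \to \mt{\mulset(\pi_2)}$ and $\beta\colon \mt{\mulsetdown(\pi_1)} \to \mt{\mulsetdown(\pi_2)}$ intertwining the restricted factor maps, and the given flow equivalence of the irreducible SFTs $Y_1,Y_2$, characterization (vii) of Theorem \ref{thm:aftconditions} tells me that $\mulset(\pi_i)$ is a proper subshift of $Y_i$. The Extension Theorem (Theorem \ref{extension-thm}) then extends $\alpha$ to a flow equivalence $\widetilde\alpha\colon \mt Y_1 \to \mt Y_2$. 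I would next define $\widetilde\beta\colon \mt X_1 \to \mt X_2$ as the continuous descent of $\mt\pi_2 \circ \widetilde\alpha$ through the quotient map $\mt\pi_1$; by construction the pair $(\widetilde\alpha,\widetilde\beta)$ intertwines $\mt\pi_1$ and $\mt\pi_2$, which is exactly the conclusion $\FE{\pi_1}=\FE{\pi_2}$.

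The hard part will be verifying the descent is well-defined. The Extension Theorem gives no control over $\widetilde\alpha$ off $\mt{\mulset(\pi_1)}$, so a priori $\widetilde\alpha$ might fail to respect $\mt\pi_1$-fibers. The saving point is that every nontrivial $\mt\pi_1$-fiber sits inside $\mt{\mulset(\pi_1)}$ --- since $\mulsetdown(\pi_1)$ is shift invariant and $\pi_1$ is injective off $\mulset(\pi_1)$ --- and on that subflow $\widetilde\alpha$ is pinned down to equal $\alpha$. The restricted commutativity $\beta \circ \mt{((\pi_1)\rM)} = \mt{((\pi_2)\rM)} \circ \alpha$ is then exactly the statement that $\alpha$ sends $\mt\pi_1$-fibers to $\mt\pi_2$-fibers, so the descent exists. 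Continuity follows because $\mt\pi_1$ is a closed finite-to-one surjection between compact Hausdorff spaces and is therefore a quotient map; applying the same argument to $\widetilde\alpha^{-1}$ (which extends $\alpha^{-1}$) yields a two-sided continuous inverse, and orientation preservation transfers from $\widetilde\alpha$.
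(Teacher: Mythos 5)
Your proposal is correct and follows essentially the same route as the paper: $(1)\Rightarrow(2)$ via Theorem \ref{theorem:prereduction}, and the converse via the Extension Theorem \ref{extension-thm} followed by descending the extended flow equivalence through the quotient maps $\mt{\pi_i}$ (the paper proves $(3)\Rightarrow(1)$ directly where you prove $(3)\Rightarrow(2)$, but the argument is the same). Your verification that the descent is well defined --- every nontrivial $\mt{\pi_1}$-fiber lies in $\mt{\mulset(\pi_1)}$, where the extension is pinned to $\alpha$ --- is exactly the point the paper asserts in one sentence, and your fleshed-out $(2)\Rightarrow(3)$ supplies the detail the paper calls obvious.
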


\begin{proof}
  It is obvious that \eqref{item:2} implies \eqref{item:3}. 
 It is proven in \cite{KriegerSoficI} that the Fischer cover is canonical,
  and it follows from Proposition \ref{prop:fischer} that it also
  respects symbol expansion. The domain of  each Fischer cover 
$\pi_i$ is an irreducible SFT,  in which  $\mulset(\pi_i)$ is a
proper  subshift \cite{mbwk:amsess}. 
Thus
 all of the assumptions of
  Theorem \ref{theorem:prereduction} are satisfied, and 
\eqref{item:1} implies \eqref{item:2}. 
It remains  to prove that
  \eqref{item:3} implies \eqref{item:1}.

  Suppose then that condition \eqref{item:3} holds. 
Then there are flow equivalences $\phi, \psi$ giving  a 
commuting central square in the following diagram  
  \begin{equation*}
    \xymatrix{
\maptorus{Y_1}
\ar[ddd]_{\maptorus{\pi_{1}}}\ar[rrr]^{\widetilde{\phi}}
&&&\maptorus{Y_2}
\ar [ddd]^{\maptorus{\pi_{2}}}\\
&      \maptorus{\mulset(\pi_{1})}\ar@{_{(}->}[lu]\ar[r]^{\phi}\ar[d]_{\maptorus\pi_{1}\rM} 
      &\maptorus{\mulset(\pi_{2})}\ar[d]^{\maptorus\pi_{2}\rM}\ar@{^{(}->}[ru]&\\
&      \maptorus{\mulsetdown(\pi_{1})}\ar[r]^{\psi}\ar@{^{(}->}[ld]
      &\maptorus{\mulsetdown(\pi_{2})}\ar@{_{(}->}[rd]&\\
\maptorus{X_1}
&&&\maptorus{X_2}
} 
  \end{equation*} 
in which the hooked arrows are inclusions. 
By the Extension Theorem \ref{extension-thm},  there exists a flow
  equivalence $\widetilde{\phi}\colon \maptorus{Y_1} \to 
\maptorus{Y_2}$ extending $\phi$, so that the entire diagram
commutes. 
Because the homeomorphism $\widetilde{\phi}$ takes the quotient
relation 
of $\mt{\pi_1}$ to that of $\mt{\pi_2}$, it induces a homeomorphism
  $\maptorus{X_1} \to \maptorus{X_2}$, which is easily seen to be 
a flow equivalence. 
Hence \eqref{item:3} implies \eqref{item:1}.
\end{proof}

%
%
%



Next, as an immediate application of the Reduction Theorem \ref{theorem:reduction}
we classify 
the irreducible 
near Markov  shifts up to 
flow equivalence.

\begin{defi} 
\cite{mbwk:amsess} An irreducible  sofic shift space $X$ is \emph{near
  Markov} if one of its Fischer covers $\pi \colon Y\to X$ has a finite multiplicity set $\mulset(\pi)$.
\end{defi}

A near Markov shift is AFT, so as noted above, 
we can refer to {\it the} Fischer cover (up to conjugacy of factor maps).

\begin{prop}\label{nearminv}
If $X$ is flow equivalent to $X'$, and $X$ is 
irreducible near Markov, then so is $X'$.
\end{prop}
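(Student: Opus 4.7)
The plan is to apply Theorem \ref{theorem:prereduction} to the Fischer covers and then transport finiteness of the multiplicity set across the resulting flow equivalence of factor maps. Since flow equivalence preserves irreducibility and soficity, $X'$ is an irreducible sofic shift with a Fischer cover $\pi' \colon Y' \to X'$; let $\pi \colon Y \to X$ denote the Fischer cover of $X$. The Fischer cover is canonical (Krieger) and respects symbol expansion (Proposition \ref{prop:fischer}), so Theorem \ref{theorem:prereduction} yields flow equivalences $\psi \colon \maptorus{Y} \to \maptorus{Y'}$ and $\phi \colon \maptorus{X} \to \maptorus{X'}$ satisfying $\maptorus{\pi'} \circ \psi = \phi \circ \maptorus{\pi}$.

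The next step is to show that $\psi$ sends $\maptorus{\mulset(\pi)}$ onto $\maptorus{\mulset(\pi')}$. This is a refinement of Corollary \ref{corollary:prereduction}: the proof there shows that whether a given multiplicity count occurs is visible from the homeomorphism class of $\maptorus{\pi}$, by counting preimage flow lines (for non-periodic base orbits) or by summing winding numbers of preimage circles (for periodic base orbits). The same analysis, read pointwise rather than existentially, identifies exactly which flow lines in $\maptorus{X}$ lie in $\maptorus{\mulsetdown(\pi)}$; this is a topological invariant of the factor map $\maptorus{\pi}$, so $\phi$ must send $\maptorus{\mulsetdown(\pi)}$ to $\maptorus{\mulsetdown(\pi')}$, and pulling back through the covers then gives $\psi(\maptorus{\mulset(\pi)}) = \maptorus{\mulset(\pi')}$.

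With this identification the proposition follows quickly. Since $X$ is near Markov, the shift-invariant set $\mulset(\pi)$ is finite, hence consists of finitely many periodic orbits, so $\maptorus{\mulset(\pi)}$ is a finite disjoint union of circles. Flow equivalences carry circles to circles, so $\maptorus{\mulset(\pi')}$ is likewise a finite disjoint union of circles, forcing $\mulset(\pi')$ to be a finite union of periodic orbits and thus finite. Therefore $X'$ is near Markov. The main obstacle will be the rigorous pointwise sharpening of Corollary \ref{corollary:prereduction}: the underlying topological principle is clear, but the bookkeeping between points and orbits --- in particular the winding-number accounting when base orbits are periodic --- requires careful execution to be sure that the set-theoretic equality $\psi(\maptorus{\mulset(\pi)}) = \maptorus{\mulset(\pi')}$ really holds and not merely equality of underlying cardinalities.
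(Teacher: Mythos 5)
Your proposal is correct and follows essentially the same route as the paper: the paper's (much terser) proof also reduces to the flow equivalence of Fischer covers via Theorem \ref{theorem:prereduction} and then invokes Corollary \ref{corollary:prereduction}, whose proof contains exactly the flow-line/winding-number analysis you spell out to see that the sub-mapping torus over the multiplicity set is carried to the corresponding set on the other side. The only cosmetic difference is that the paper first cites \cite{FujiwaraOsikawa} for flow invariance of the AFT property and phrases the conclusion as "the mapping torus of $\mulset(\pi)$ is a finite union of circles," whereas you work directly with finiteness of $\mulset(\pi')$; both are fine.
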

\begin{proof} 
Because $X$ is an AFT shift, 
so is $X'$
\cite{FujiwaraOsikawa}. 
The irreducible near Markov shifts are precisely 
the AFT sofic shifts for which the 
mapping torus of the multiplicity 
set $\mulset (\pi )$ of the Fischer cover $\pi$ 
is a finite union of circles, so the conclusion follows 
from  
Corollary \ref{corollary:prereduction}.

\end{proof}

\begin{defi}\label{def:perpoint}
  The \emph{multiplicity graph} $\mugr{\pi}$ of a factor map $\pi\colon Y\to
  X$ with $\mulset(\pi)$ finite is a bipartite graph defined as
  follows. Organize the finite sets $\mulset(\pi)$ and
$\mulsetdown(\pi)$
into orbits $o_1,\dots o_k$ and
 $v_1,\dots ,v_\ell$ and 
let $j(i)$ be the $j$ such that $\pi(o_i)=v_{j}$. 
Note that in this case, $|v_{j(i)}|$ must divide $|o_i|$, and set
\[
w(i)=\frac{|o_i|}{|v_{j(i)}|}.
\]
The graph $\mugr{\pi}$ then has vertices $\{y_1,\dots,y_k,x_1,\dots x_\ell\}$ with $w(i)$ edges from $y_i$ to $x_i$ for each $i\in\{1,\dots k\}$.
\end{defi}

\begin{lemma}\label{mulgraphiscool}
The multiplicity graph is a complete flow invariant for the 
class of maps  $\pi\rM (\pi)$  such that $\mulset(\pi)$ is finite.
\end{lemma}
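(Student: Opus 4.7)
The plan is to reduce the statement to a componentwise verification, using that $\mulset(\pi)$ finite forces both $\mulset(\pi)$ and $\mulsetdown(\pi)$ to be disjoint unions of finitely many periodic orbits. Indeed, any finite shift-invariant set is such a union, and since $\pi\rM$ is shift-equivariant, each orbit $o_i\subseteq\mulset(\pi)$ maps onto a single orbit $v_{j(i)}\subseteq\mulsetdown(\pi)$. Equivariance then forces $|v_{j(i)}|$ to divide $|o_i|$ and the restriction $o_i\to v_{j(i)}$ to be exactly $w(i)$-to-one, so the multiplicity graph encodes precisely the combinatorial data of the factor map.

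For invariance, I would pass to mapping tori: $\maptorus{\mulset(\pi)}$ is a disjoint union of $k$ topological circles (one for each orbit $o_i$), and $\maptorus{\mulsetdown(\pi)}$ is a disjoint union of $\ell$ circles, with $\maptorus{\pi\rM}$ restricting on each component to an orientation preserving covering map $\maptorus{o_i}\to\maptorus{v_{j(i)}}$ of degree $w(i)$ (the argument mirrors the periodic-orbit analysis in the proof of Corollary \ref{corollary:prereduction}). An equivalence of flows from $\pi_1\rM$ to $\pi_2\rM$ consists of two orientation preserving homeomorphisms of disjoint unions of circles forming a commuting square; necessarily it induces bijections on the sets of connected components that intertwine the two covering maps, and two orientation preserving coverings of circles with distinct degrees are never conjugate by such homeomorphisms. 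Hence the bijection of components preserves both the incidence pattern $i\mapsto j(i)$ and the weights $w(i)$, giving an isomorphism $\mugr{\pi_1}\cong\mugr{\pi_2}$.

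For completeness, I would invert this construction. An isomorphism $\mugr{\pi_1}\cong\mugr{\pi_2}$ provides bijections $o_i\leftrightarrow o'_{\tau(i)}$ and $v_j\leftrightarrow v'_{\tau'(j)}$ such that corresponding components have matching covering degrees. On each matched pair of components one needs only produce an orientation preserving homeomorphism of circles covering a chosen homeomorphism of base circles, for two degree-$w$ covers of the circle; this is the standard fact that $z\mapsto z^w$ is, up to pre- and post-composition with orientation preserving self-homeomorphisms, the unique such cover. Assembling the component-wise homeomorphisms gives flow equivalences $\phi,\psi$ making the diagram for $\pi_1\rM$ and $\pi_2\rM$ commute.

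The only real obstacle is the degree-matching of covering maps of the circle, which is purely topological and familiar; the symbolic/dynamical content is entirely absorbed into recording the orbit lengths via the weights $w(i)$, so once the reduction to finite unions of periodic orbits has been made explicit, there is nothing further to check.
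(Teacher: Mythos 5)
Your argument is correct and is essentially the paper's own proof: both identify the mapping tori of $\mulset(\pi)$ and $\mulsetdown(\pi)$ as finite unions of circles, observe that $\maptorus{\pi}$ restricts on each component to a degree-$w(i)$ wrapping whose winding number is a complete flow invariant of that restricted map, and conclude that the multiplicity graph records exactly this data. You merely spell out the invariance and completeness steps that the paper dismisses as ``easily checked,'' which is fine.
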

\begin{proof}
The mapping tori for finite shifts  
$\mulset (\pi)$ and $\mulsetdown(\pi)$ 
are finite unions of circles. 
A circle $\mt o_i$ 
in  $\maptorus \mulset (\pi)$ is wrapped by $\maptorus\pi$ 
$w(i)$  times around its image circle
$\mt v_{j(i)}$;
  this (winding number) $w(i)$ is a complete
invariant 
of flow equivalence of the map $\maptorus\pi|_{\maptorus o_i}$.
 Hence 
the multiplicity graph  encodes an invariant of flow equivalence, 
which is easily checked to be complete,  for this class.
The conclusion follows by Theorem \ref{theorem:reduction}.
\end{proof}

\begin{defi}
  For a near Markov shift $X$ with Fischer cover $\pi\colon Y\to X$ where $Y=Y_A$ for a matrix $A$, we define $\inva(X)$ as the collection of data
\[
\left(\BFgroup{Y_A}, \det (I-A) , \mugr{\pi}\right).
\]
\end{defi}

\begin{theorem}\label{nearmarres}
  For a pair of near Markov shifts $X,X'$ we have
\[
\FE{X}=\FE{X'}\Longleftrightarrow \inva(X)\simeq \inva(X').
\]
\end{theorem}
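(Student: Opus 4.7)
The plan is to derive this classification as a direct combination of the Reduction Theorem \ref{theorem:reduction}, Franks' classification of irreducible SFTs up to flow equivalence, and Lemma \ref{mulgraphiscool}. By the Reduction Theorem, flow equivalence of the near Markov (hence AFT) shifts $X$ and $X'$ decomposes into two independent conditions on the Fischer covers $\pi_i\colon Y_{A_i}\to X_i$: flow equivalence of the cover SFTs $Y_{A_i}$, and flow equivalence of the restricted factor maps $(\pi_i)\rM$. Since the multiplicity sets are finite in the near Markov setting, Lemma \ref{mulgraphiscool} tells us that $\mugr{\pi_i}$ is a complete flow invariant of $(\pi_i)\rM$; and Franks' theorem tells us that $(\BFgroup{Y_{A_i}}, \det(I-A_i))$ (interpreted in the standard way, i.e.\ group plus sign of determinant) is a complete flow invariant of $Y_{A_i}$. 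So the two components of $\inva(X)$ correspond precisely to the two conditions of Theorem \ref{theorem:reduction}(\ref{item:3}).

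For the forward direction, assume $\FE{X}=\FE{X'}$. Theorem \ref{theorem:reduction} gives flow equivalence $Y_{A_1}\sim Y_{A_2}$, from which Franks' theorem extracts an isomorphism $\BFgroup{Y_{A_1}}\simeq\BFgroup{Y_{A_2}}$ with matching signs of $\det(I-A_i)$. The theorem simultaneously supplies a flow equivalence of the restricted factor maps $(\pi_1)\rM$ and $(\pi_2)\rM$, from which Lemma \ref{mulgraphiscool} extracts $\mugr{\pi_1}\simeq\mugr{\pi_2}$. Together these assemble into $\inva(X)\simeq\inva(X')$.

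Conversely, suppose $\inva(X)\simeq\inva(X')$. Matching Bowen-Franks group and sign of determinant, via Franks' theorem, produce a flow equivalence $Y_{A_1}\sim Y_{A_2}$. The isomorphism of multiplicity graphs, via Lemma \ref{mulgraphiscool}, produces a flow equivalence of $(\pi_1)\rM$ and $(\pi_2)\rM$. Condition (\ref{item:3}) of Theorem \ref{theorem:reduction} is then satisfied, yielding $\FE{X}=\FE{X'}$.

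No step here is a genuine obstacle, since all of the substantive work has been packaged into the Reduction Theorem and Lemma \ref{mulgraphiscool}. The only point requiring a small check is well-definedness of $\inva(X)$: the matrix $A$ depends on a choice of SFT presentation of the Fischer cover, but $\BFgroup{Y_A}$, the sign of $\det(I-A)$, and $\mugr{\pi}$ are all invariants of the conjugacy (hence flow equivalence) class of $\pi$ as a factor map, and that class is intrinsic to $X$ by the canonicity of the Fischer cover for AFT shifts (Theorem \ref{thm:aftconditions}).
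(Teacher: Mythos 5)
Your proposal is correct and follows essentially the same route as the paper: apply the Reduction Theorem (valid since $\mulset(\pi)$ is closed by Theorem \ref{thm:aftconditions}(vii)), then invoke Franks' classification for the cover SFTs and Lemma \ref{mulgraphiscool} for the restricted factor maps. The added remark on well-definedness of $\inva(X)$ via canonicity of the Fischer cover is a reasonable, if implicit in the paper, supplement.
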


\begin{proof}
We may apply Theorem \ref{theorem:reduction} since $\mulset(\pi)$ is closed by Theorem \ref{thm:aftconditions}(vii). Now the first component of our invariant is a complete invariant of flow equivalence of the Fischer cover by Franks's classification (\cite{jf:fesft}), and the latter is a complete invariant of the multiplicity cover by Lemma \ref{mulgraphiscool}.
\end{proof}

Franks proved in \cite{jf:fesft} that any irreducible SFT is flow
equivalent to its time-reversal, by noting that his complete invariant
did not distinguish them.  
This fails dramatically  even for general AFT shifts; e.g., for the Fischer 
cover $\pi$, we could arrange $\mulsetdown (\pi )$ to be the SFT 
presented by the matrix $\left(\begin{smallmatrix} 2 & 1 \\ 0 & 1
\end{smallmatrix}\right)$. However, Franks' result  
carries over to near Markov shifts.
\begin{corollary}
A near Markov shift is flow equivalent to its time-reversal.
\end{corollary}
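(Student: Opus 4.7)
The plan is to apply the classification of Theorem \ref{nearmarres}: it suffices to verify that a near Markov shift $X$ and its time-reversal, which we denote $X^{\mathrm{rev}}$, have isomorphic invariants $\inva$. One first notes that $X^{\mathrm{rev}}$ is itself near Markov, since time-reversal preserves both the AFT property (a factor map from an SFT which is one-to-one on an open set stays so after reversing time) and the finiteness of the multiplicity set of the Fischer cover.

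My first substantive step would be to identify the Fischer cover of $X^{\mathrm{rev}}$. If the right Fischer cover of $X$ is the labeled edge shift of an irreducible graph with adjacency matrix $A$, then reversing every arrow produces a left-resolving presentation of $X^{\mathrm{rev}}$ with adjacency matrix $A^T$, which is (up to isomorphism of labeled graphs) the left Fischer cover of $X^{\mathrm{rev}}$. Since $X^{\mathrm{rev}}$ is AFT, Theorem \ref{thm:aftconditions}(v) tells us its left and right Fischer covers are conjugate as factor maps, so the Fischer cover of $X^{\mathrm{rev}}$ is presented, up to conjugacy of factor maps, by $A^T$.

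Next I would compare the three components of $\inva(X) = (\BFgroup{Y_A},\det(I-A),\mugr{\pi})$ with those of $\inva(X^{\mathrm{rev}})$. The Bowen--Franks groups $\cok(I-A)$ and $\cok(I-A^T)$ are isomorphic (Smith normal form), and obviously $\det(I-A)=\det(I-A^T)$. For the multiplicity graph, the time-reversal map restricts to a shift-compatible bijection between $\mulset(\pi)$ and the analogous set for the Fischer cover of $X^{\mathrm{rev}}$, and similarly on the bases; this carries orbits to orbits of the same cardinality and fibers to fibers of the same cardinality. Consequently the data $j(i)$ and $w(i)$ of Definition \ref{def:perpoint} are preserved, so the multiplicity graphs are isomorphic.

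The only nonroutine part is the identification of the Fischer cover of $X^{\mathrm{rev}}$ with the transpose cover, which hinges on the AFT hypothesis through Theorem \ref{thm:aftconditions}(v); once it is in place, Theorem \ref{nearmarres} immediately yields $\FE{X} = \FE{X^{\mathrm{rev}}}$.
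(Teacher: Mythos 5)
Your proposal is correct and follows exactly the paper's (one-line) argument: apply Theorem \ref{nearmarres} and observe that the complete invariant $\inva$ is unchanged under time-reversal. You simply supply the details the paper leaves implicit -- identifying the Fischer cover of the reversal with the transpose presentation via Theorem \ref{thm:aftconditions}(v), and checking invariance of $\cok(I-A)$, $\det(I-A)$, and the multiplicity graph -- all of which are verified correctly.
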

\begin{proof}
The complete invariant is the same for the system and its time-reversal.
\end{proof}

\section{$N$-point extensions and $G$-SFTs} \label{sec:np}
\label{sec:constant} 

For a set $E$, let $\symE$ be the group of permutations
of $E$, with the group product $gh$  defined by
$gh \colon x\mapsto g(h(x))$ (i.e., $h$ acts first).  
Let $\symN$ denote $\symE$ with
$E= \{1, \dots , N\}$. 

In this section we recall how to
reduce the  classification up to
topological conjugacy of  
$N$-point extensions of 
SFTs  to the 
classification of related $G$-SFTs with 
$G=\symN$. 
This reduction will be used for flow 
equivalence results in Section \ref{sec: pettype}. 
The reduction is  due to 
 Adler, Kitchens and
 Marcus \cite{akmfactor,akmgroup}
 (adapting ideas of Rudolph
\cite{rudolphcounts}).  

Recall that factor maps $\pi\colon  Y\to X$ and $\pi'\colon  Y' \to X'$ 
are defined to be isomorphic (topologically conjugate)
 if there are topological conjugacies 
$\alpha, \beta$ such that $\pi'\alpha = \beta \pi$. 
Equivalently, there is  a topological conjugacy $\alpha \colon  Y\to Y'$ 
such that for all $w,y$ in $Y$: 
$\pi (w) = \pi (y)$ if and only if $\pi'(\alpha (w)) = \pi'(\alpha
(y))$.

\begin{definition} 
\label{defn:npoint}
Suppose  $N$ is a positive integer and 
$\sigma\colon  X\to X$ is a homeomorphism of a compact metric space. 
Let $Y=X\times \{1, \dots ,N\}$ and $\rho\colon Y\to Y$ be a homeomorphism  of the form 
\[
\rho\colon (x,k)\mapsto (\sigma(x), \tau_x(k)),
\]
with 
$\tau_x\in \symN$
acting from the left 
(e.g., $\rho^2\colon (x,k)\mapsto (\sigma^2(x), \tau_{\sigma(x)}\tau_x(k))$).
Then the factor map $(Y,\rho)\mapsto (X,\sigma)$ defined 
by $(x,k)\mapsto x$ is an \emph{$N$-point extension} of 
$(X,\sigma)$. 
A factor map $(Y,\rho)\to (X,\sigma)$ is an $N$-point 
extension of $(X,\sigma)$ if and only if it is isomorphic 
to such a factor map.
When $(Y,\rho)$ is the domain system of
an $N$-point extension of $(X,\sigma)$, $(Y,\rho)$ itself is
sometimes referred to as an $N$-point extension of $(X,\sigma)$. 
The function $\tau$ is called the {\it skewing function}. 
\end{definition}

\begin{facts} \label{routine}
We mention some routinely verified facts. 
\begin{enumerate} 
\item 
The map $ x\mapsto \tau_x$  from Definition \ref{defn:npoint} is 
a continuous function $\tau \colon  X\to \symN$.   
\item 
A continuous, 
 constant \ntoone{N} 
factor map $\pi \colon  (Y,\rho)\to (X,\sigma)$ is an 
 $N$-point extension of $(X,\sigma)$ if and only if 
there are $N$ 
disjoint sections to $\pi$, i.e.\ continuous maps 
$\tau_i\colon  X\to Y$, $1\leq i \leq N$,
with disjoint images, satisfying $(\pi\tau_i) (x) = x$ 
for all $x$ in $X$. 
\item 
An $N$-point extension of $(X,\sigma)$ is SFT if and 
only if $(X,\sigma)$ is 
SFT.  
\end{enumerate} 
\end{facts}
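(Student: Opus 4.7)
Each of the three assertions unpacks directly from the definition, and I would address them in order, giving each the minimum treatment it requires.

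For Fact (1), the key observation is that $Y = X \times \{1, \dots, N\}$ carries the product topology with the second factor discrete; projecting the identity $\rho(x,i) = (\sigma(x), \tau_x(i))$ onto the second coordinate shows that $x \mapsto \tau_x(i)$ is continuous for each fixed $i$. Since $\symN$ is finite and discrete, the fibers $\{x \colon \tau_x = g\} = \bigcap_i \{x \colon \tau_x(i) = g(i)\}$ are clopen, and $\tau$ is continuous. For Fact (2), the ``only if'' direction is immediate via $\tau_i(x) = (x,i)$. For the converse, given disjoint continuous sections $\tau_1, \dots, \tau_N$ of a constant \ntoone{N} map $\pi$, I would define $\alpha \colon X \times \{1,\dots,N\} \to Y$ by $\alpha(x,i) = \tau_i(x)$; this is a continuous bijection between compact Hausdorff spaces (each fiber of $\pi$ has exactly $N$ points, necessarily the distinct $\tau_i(x)$), hence a homeomorphism. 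Transporting $\rho$ across $\alpha$ yields a homeomorphism of the form $(x,i) \mapsto (\sigma(x), \tau_x(i))$ with $\tau_x \in \symN$ forced by invertibility of $\rho$.

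For Fact (3), the forward direction leans on Fact (1): since $\tau \colon X \to \symN$ is a continuous map into a discrete space and $X$ is compact, $\tau$ is locally constant. Passing to a sufficiently high block presentation of $X$, I may arrange that $\tau_x$ depends only on $x_0$, at which point $(Y, \rho)$ is manifestly conjugate to the 1-step SFT on alphabet $\alp{X} \times \{1, \dots, N\}$ in which $(a,i) \to (b,j)$ is allowed exactly when $ab \in \lan{X}$ and $j = \tau_a(i)$.

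The main obstacle is the converse direction of Fact (3), which requires a genuine lifting argument. My plan is to fix an $m$-step SFT presentation of $Y$ and a radius $r$ large enough that the sections $\tau_1, \dots, \tau_N$ (from Fact (2)) and the factor map $\pi$ are all sliding block codes of radius at most $r$. Given any $x \in \alp{X}^{\Z}$ all of whose subwords of length $2(m+r)+1$ appear in $X$, I would define a candidate lift $y \in \alp{Y}^{\Z}$ by $y_n = F(x_{n-r}, \dots, x_{n+r})$, where $F$ is an arbitrary symbolic extension to $\alp{X}^{2r+1}$ of the block code computing $\tau_1$. Each window $y_{[n-m, n+m]}$ agrees with $\tau_1(x')_{[n-m, n+m]}$ for some $x' \in X$ matching $x$ on $[n-m-r, n+m+r]$, hence is legal in $Y$; the $m$-step property of $Y$ then forces $y \in Y$, and since $\pi \tau_1 = \textnormal{id}_X$ holds blockwise on words appearing in $X$, we obtain $\pi(y) = x$, so $x \in X$. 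This exhibits $X$ as an SFT of step $2(m+r)+1$, completing the proof.
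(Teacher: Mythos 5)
Parts (1), (2), and the forward half of (3) are fine; the paper records no argument for any of these (it calls them routinely verified), and your treatment of those parts is the standard one. The genuine problem is the converse half of (3), at the step where you fix $r$ so that "the sections $\tau_1,\dots,\tau_N$ \dots are all sliding block codes of radius at most $r$." A uniform local rule $\tau_1(x)_n=F(x_{n-r},\dots,x_{n+r})$ forces $\tau_1\sigma=\rho\tau_1$, i.e.\ equivariance; but the sections of an $N$-point extension are in general \emph{not} shift-commuting. In the standard presentation $\tau_i(x)=(x,i)$ while $\rho(x,i)=(\sigma(x),\tau_x(i))$, so $\rho\tau_i(x)=(\sigma(x),\tau_x(i))\neq \tau_i(\sigma(x))$ unless $\tau_x$ fixes $i$. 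Concretely, take $X$ the full $2$-shift, $N=2$, and $\tau_x$ the transposition exactly when $x_0=1$: the resulting $Y$ is an irreducible SFT, so it cannot decompose into two equivariant copies of $X$, hence no section is a sliding block code and the map $F$ in your construction does not exist. Everything downstream (the window-by-window legality check and the blockwise use of $\pi\tau_1=\mathrm{id}$) therefore has nothing to stand on.

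The lifting scheme itself is right; the local rule to use is the cocycle, not a section. First observe that if $Y$ is SFT then $X$ is expansive and zero-dimensional (two points of $X$ whose $\sigma$-orbits stay close lift to points of $Y$ whose $\rho$-orbits stay close), so $X$ is conjugate to a subshift and Fact (1) applies: $\tau\colon X\to\symN$ is continuous into a finite discrete set, hence a genuine block code, and after a higher-block recoding of $X$ one may assume $\tau_x=T(x_0)$. Then $(Y,\rho)$ is conjugate to the subshift $Z\subseteq(\alp{X}\times\{1,\dots,N\})^{\Z}$ of sequences $(x_n,i_n)_n$ with $x\in X$ and $i_{n+1}=T(x_n)(i_n)$, and $\pi$ becomes the $1$-block projection onto the first coordinate. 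If $Z$ is $m$-step and $x\in\alp{X}^{\Z}$ has all its $(m+1)$-blocks in $\lan{X}$, set $i_0=1$ and propagate $i_{n+1}=T(x_n)(i_n)$ in both directions (each $T(x_n)$ is a permutation, so backward propagation is determined as well); every $(m+1)$-block of $(x_n,i_n)_n$ then occurs in a point of $Z$, so $(x_n,i_n)_n\in Z$ and $x\in X$. This is exactly your "locally admissible implies globally admissible" argument, with the non-equivariant section replaced by the equivariant datum that actually is a block code.
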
 
\begin{example} 
A constant  \ntoone{N} factor map of SFTs need not be 
an $N$-point extension; for example, the 
matrix $\widetilde C$ below is the adjacency matrix of a 
labeled graph for which the 
labels define a one-block code of edge SFTs,  $X_C \to X_{\overline
  C}$,  which is 
constant \ntoone{2} but is not a $2$-point extension. 
\[
\widetilde C = 
\begin{pmatrix} 
a& 0& b& 0 \\
0& a& b& 0 \\
0& 0& c& 0 \\
0& 0& 0& c 
\end{pmatrix} 
\qquad   
C = 
\begin{pmatrix} 
1& 0& 1& 0 \\
0& 1& 1& 0 \\
0& 0& 1& 0 \\
0& 0& 0& 1 
\end{pmatrix} \qquad 
\overline C = 
\begin{pmatrix} 
1& 1 \\
0& 1
\end{pmatrix}. 
\]
\end{example}

In contrast, we have the following
key fact,   
which follows immediately  from 
Nasu's work
\cite{NasuConstant}
(cited in \cite[p.\ 489, Remarks (iv)]{akmfactor}) 
after a translation of terminology. 
Tools for a  proof (not a stated result) can also be
found in \cite[Sec. 4.3]{bpk:sd}.

\begin{theorem} \cite[Theorem 7.3; see also Corollary 6.6]{NasuConstant}
  Suppose $\pi $ is a constant \ntoone{N}  factor map
  between irreducible SFTs. Then $\pi$ is an $N$-point extension.
\end{theorem}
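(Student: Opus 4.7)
The plan is to reduce, via Fact \ref{routine}(2), to producing $N$ continuous shift-commuting sections $\tau_1,\dots,\tau_N\colon X\to Y$ of $\pi$ with pairwise disjoint images, and then to extract such sections from Nasu's structure theory for constant-to-one factor codes between irreducible SFTs in \cite{NasuConstant}.

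First, after passing to a higher block presentation, I would assume that $\pi$ is a one-block code induced by a labeling $\labe\colon\alp{Y}\to\alp{X}$ with every $x\in X$ having exactly $N$ preimages. Nasu's Corollary 6.6 and Theorem 7.3 of \cite{NasuConstant} analyze such codes via a graph-theoretic decomposition of the presenting graph of $Y$: irreducibility of $X$ together with the constant-to-one hypothesis forces the fiber $\pi^{-1}(x)$ to vary with $x$ in a locally constant, globally consistent fashion, producing a clopen shift-invariant decomposition $Y=\tau_1(X)\sqcup\cdots\sqcup\tau_N(X)$ in which $\pi$ restricts to a homeomorphism on each piece. Inverting these restrictions yields the desired sections, and the skewing function $\tau\colon X\to\symN$ is then recovered by the identity
\[
\rho(\tau_k(x))=\tau_{\tau_x(k)}(\sigma(x)),
\]
with continuity of $\tau$ automatic from the continuity of $\rho$ and of each $\tau_i$, and the fact that $\rho$ is a homeomorphism forcing each $\tau_x$ to be a permutation.

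The main obstacle is the translation of terminology itself: Nasu phrases his results using simple codes, bipartite codes, and class degree rather than in terms of continuous sections, so one must verify that his splitting of the presenting graph really does produce a genuine clopen partition of $Y$ by globally defined, shift-equivariant sections, rather than merely a local or combinatorial decomposition of symbols. The Example preceding the statement (the matrix $\widetilde C$) shows why irreducibility is essential: it is exactly what rules out configurations in which distinct source components map onto the same target letter in a non-splitting way, and this is precisely the hypothesis that allows the fiber labelings to be made consistent across all of $X$. As the authors note, an alternative route avoiding direct appeal to \cite{NasuConstant} is to assemble the sections using the techniques of \cite[Sec.\ 4.3]{bpk:sd}.
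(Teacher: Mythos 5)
The paper offers no proof of this statement---it is quoted verbatim from Nasu, and the authors claim only that it ``follows immediately \dots after a translation of terminology''---so what you are really supplying is that translation. Your overall strategy (reduce via Fact~\ref{routine}(2) to producing $N$ disjoint continuous sections, then extract them from \cite{NasuConstant}) is the intended route, but your sketch contains a genuine error: you require the sections to be \emph{shift-commuting} and the decomposition $Y=\tau_1(X)\sqcup\cdots\sqcup\tau_N(X)$ to be \emph{shift-invariant}. Fact~\ref{routine}(2) asks only for continuity, disjointness and $\pi\tau_i=\operatorname{id}_X$, and it cannot ask for more: if each $\tau_i$ intertwined $\sigma$ with $\rho$, then each $\tau_i(X)$ would be a nonempty proper clopen $\rho$-invariant subset of $Y$, contradicting irreducibility of $Y$ whenever $N\geq 2$. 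Your own displayed identity $\rho(\tau_k(x))=\tau_{\tau_x(k)}(\sigma(x))$ already records the truth---$\rho$ permutes the pieces fiberwise according to the skewing function and does not preserve them, even setwise---so every occurrence of ``shift-commuting'', ``shift-equivariant'' and ``shift-invariant'' in your requirements must be deleted; what remains is exactly the right target.

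The second, related, gap is that the sentence ``irreducibility \dots forces the fiber to vary in a locally constant, globally consistent fashion'' asserts the conclusion rather than arguing for it, and it is precisely the assertion that fails in the reducible example $\widetilde C$, where two preimages of a point agree on an entire right tail (so their shifted images become arbitrarily close and no continuous disjoint sections exist). The substantive input you need from \cite{NasuConstant} is the \emph{uniform separation of fibers}: for a constant \ntoone{N} code between irreducible SFTs one may recode $\pi$ to a one-block code on a higher block presentation in which the $N$ preimages of every point already differ in their zero coordinates. Granting that, the sections are cheap: since every limit of preimages is a preimage and all fibers have exactly $N$ points with distinct zero-coordinate symbols, the fibers vary continuously, and labelling the $N$ points of each fiber by ordering their zero-coordinate symbols (say by initial vertex) yields continuous disjoint sections $\tau_1,\dots,\tau_N$---exactly the construction the paper carries out explicitly in Section~\ref{sec:petalg}. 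With these two repairs your outline becomes the standard derivation.
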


We define two $N$-point extensions to be isomorphic 
if they are 
 isomorphic as  factor maps. 
%
%
%
The following standard fact is another routine exercise. 
\begin{fact} \label{fact:npoint}
$N$-point extensions,  
defined by data $(X,\sigma, \tau ) $  and 
 $(X',\sigma', \tau' ) $ as in 
Definition
\ref{defn:npoint}, with $\tau $ and $\tau' $ skewing from the 
left,   are isomorphic if and only if there 
is a conjugacy $\phi\colon  (X,\sigma) \to (X',\sigma')$ 
such that $\tau$ and $(\tau'\phi)$ are 
cohomologous in $(X,\sigma)$: i.e., 
there is 
 a continuous $\gamma \colon  X \to \symN$ such that 
for all $x$ in $X$, 
\[
\tau' ( \phi (x))\  =\  
\gamma ( \sigma (x))\,  \tau (x)\, 
(\gamma (x)  )^{-1}
\]
 (where the right hand side is a product in the 
group $\symN$).   
\end{fact}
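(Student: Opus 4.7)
The plan is a direct translation between a fibre-preserving conjugacy $\alpha$ and a continuous coboundary $\gamma$, exploiting the observation that any isomorphism of $N$-point extensions must permute the $N$ points in each fibre over a base point.

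For the easy direction, given a conjugacy $\phi\colon (X,\sigma)\to (X',\sigma')$ and a continuous $\gamma\colon X\to \symN$ satisfying the stated identity, I would define $\alpha\colon Y\to Y'$ by $\alpha(x,k) = (\phi(x),\gamma(x)(k))$. Continuity is automatic, and $\alpha$ is a homeomorphism with inverse $(x',k')\mapsto (\phi\inv(x'), \gamma(\phi\inv(x'))\inv(k'))$. A direct computation gives
\[
\alpha\rho(x,k) = (\phi\sigma(x),\gamma(\sigma(x))\tau_x(k)), \qquad \rho'\alpha(x,k) = (\sigma'\phi(x), \tau'_{\phi(x)}\gamma(x)(k)),
\]
and these agree by $\phi\sigma = \sigma'\phi$ together with the cohomology relation (the latter being precisely the equality of second coordinates). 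Since $\alpha$ sends the fibre of $\pi$ over $x$ to the fibre of $\pi'$ over $\phi(x)$, the pair $(\alpha,\phi)$ realises the isomorphism of factor maps.

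For the converse, suppose $(\alpha,\phi)$ is an isomorphism of the two factor maps, i.e., $\pi'\alpha = \phi\pi$. Then $\alpha$ carries $\{x\}\times\{1,\dots,N\}$ bijectively onto $\{\phi(x)\}\times\{1,\dots,N\}$, so there is a unique $\gamma(x)\in\symN$ with $\alpha(x,k) = (\phi(x),\gamma(x)(k))$ for every $k$. For each fixed $k$, the assignment $x\mapsto \gamma(x)(k)$ equals the second coordinate of $\alpha$ restricted to $X\times\{k\}$, hence is continuous into the discrete set $\{1,\dots,N\}$; as $\symN$ is finite, this yields continuity of $\gamma$. Rewriting $\alpha\rho = \rho'\alpha$ and comparing second coordinates gives $\gamma(\sigma(x))\tau_x(k) = \tau'_{\phi(x)}\gamma(x)(k)$ for all $k$, which rearranges to the required cohomology identity.

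The only real care needed is bookkeeping with the direction of composition: since $\tau$ and $\tau'$ act from the left, the coboundary appears as $\gamma(\sigma(x))$ on the left and $\gamma(x)\inv$ on the right, and the convention must be preserved throughout. No deeper obstacle arises, as the statement is essentially the definitional translation between the skewing cocycle and its coboundary equivalence once one notes that $\symN$-valued continuous maps on $X$ are automatically locally constant.
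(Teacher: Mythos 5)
Your proof is correct and is exactly the routine fibrewise computation the paper has in mind (the paper states Fact \ref{fact:npoint} without proof, calling it ``another routine exercise''). Both directions check out, including the continuity of $\gamma$ via local constancy and the order of composition in the cocycle identity, which matches the paper's convention that in the product $gh$ the permutation $h$ acts first.
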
 

\begin{definition} In this paper,  a  $G$-SFT is a 
shift of finite type $X$ together with a free, continuous shift-commuting
action of a finite group $G$. The factor map of the $G$-SFT 
is the everywhere \ntoone{|G|} map which collapses $G$-orbits to 
points. The $G$-SFT is a left $G$-SFT if the $G$-action is a left
action ($g\colon  x\mapsto gx$; $gh\colon x \mapsto g(hx)$);
it is a right  $G$-SFT if the $G$-action is a right 
action ($g\colon  x\mapsto xg$; $gh\colon x \mapsto (xg)h$).
\end{definition}

By definition, a  conjugacy (isomorphism)
of two $G$-SFTs is a topological conjugacy   of the underlying SFTs
which intertwines their $G$-actions; equivalently, it is a conjugacy 
of the factor maps  of the $G$-SFTs
which intertwines their $G$-actions. 

The factor map of a right $G$-SFT is a $|G|$-point extension, and therefore can be presented as in Definition \ref{defn:npoint}, with $G$ in place of the set $\{1,\dots , N\}$. Here the permutation $\tau_x$ of   Facts \ref{routine} must be left multiplication by some element $\beta (x)=\beta_x$ of $G$, as the right $G$-action commuting with the shift forces for $g$ in $G$ that $  \tau_x(eg) =  \tau_x(e)g$. 

\begin{fact} \label{fact:rightgsft}
Two right $G$-SFTs $(X,\sigma)$ and $(X',\sigma')$ with right $G$-actions $\beta$ and $\beta'$ are isomorphic if and only if there 
is a conjugacy $\phi\colon  X \to X'$ and 
 a continuous $c \colon  X \to G$ such that 
for all $x$ in $X$, 
\begin{equation} \label{cohomologyeqnforrightactionofG} 
\beta' ( \phi (x))\  =\  
c (\sigma(x))   \, \beta (x)\, c ( x)^{-1}. 
\end{equation}
(where the right hand side is the product in the group $G$). 
\end{fact}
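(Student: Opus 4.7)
The plan is to apply Fact \ref{fact:npoint} to the underlying $|G|$-point extension structure and then impose the extra constraint coming from the intertwining of right $G$-actions. Using the discussion just before the statement, we present the factor map of the right $G$-SFT $(X,\sigma,\beta)$ as an $N$-point extension in the sense of Definition \ref{defn:npoint}, taking the fiber set to be $G$ itself (so $N=|G|$) and using the skewing function $\tau_x=L_{\beta(x)}\in\symG$, left multiplication by $\beta(x)$ in $G$; thus the shift is $\rho(x,g)=(\sigma(x),\beta(x)g)$, and similarly for $(X',\sigma',\beta')$.

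First I would apply Fact \ref{fact:npoint}: an isomorphism of these as factor maps is equivalent to a conjugacy $\phi\colon X\to X'$ together with a continuous $\gamma\colon X\to\symG$ satisfying $\tau'_{\phi(x)}=\gamma(\sigma(x))\,\tau_x\,\gamma(x)^{-1}$ in $\symG$. The corresponding total-space isomorphism is $\Phi(x,g)=(\phi(x),\gamma(x)(g))$. Then I would impose the condition that $\Phi$ intertwines the right $G$-actions, i.e. $\Phi(x,gh)=\Phi(x,g)\cdot h$ for all $h\in G$. This forces $\gamma(x)(gh)=\gamma(x)(g)\,h$, so $\gamma(x)$ must be left multiplication by the element $c(x):=\gamma(x)(e)\in G$. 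Continuity of $c$ follows from continuity of $\gamma$ and of evaluation at $e$, and conversely every continuous $c\colon X\to G$ yields a continuous $\gamma(x)=L_{c(x)}$.

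Substituting $\gamma(x)=L_{c(x)}$, $\tau_x=L_{\beta(x)}$, $\tau'_{\phi(x)}=L_{\beta'(\phi(x))}$ into the cohomology equation from Fact \ref{fact:npoint} and using that $L_aL_b=L_{ab}$ and $L_a^{-1}=L_{a^{-1}}$, one obtains
\[
L_{\beta'(\phi(x))}\ =\ L_{c(\sigma(x))\,\beta(x)\,c(x)^{-1}}
\]
in $\symG$, which (since left multiplication is faithful) is equivalent to the desired identity \eqref{cohomologyeqnforrightactionofG} in $G$.

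For the converse, given a conjugacy $\phi\colon X\to X'$ and a continuous $c\colon X\to G$ satisfying \eqref{cohomologyeqnforrightactionofG}, define $\Phi\colon X\times G\to X'\times G$ by $\Phi(x,g)=(\phi(x),c(x)g)$. It is straightforward to check that $\Phi$ is a homeomorphism (with inverse $(x',g')\mapsto(\phi^{-1}(x'),c(\phi^{-1}(x'))^{-1}g')$), that $\Phi$ intertwines the shifts using \eqref{cohomologyeqnforrightactionofG}, and that $\Phi(x,gh)=(\phi(x),c(x)gh)=\Phi(x,g)\cdot h$, so it is equivariant for the right $G$-action. There is no real obstacle here; the only point requiring care is the left/right bookkeeping, i.e.\ the observation that commutation of the right $G$-action with the shift is exactly what forces $\tau_x$ (and hence $\gamma(x)$) to be left multiplication by an element of $G$, so that the cohomology relation descends from $\symG$ to $G$.
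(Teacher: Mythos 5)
Your proposal is correct and follows essentially the same route as the paper, which justifies the Fact in one sentence immediately after its statement: the permutation $\gamma(x)$ from Fact \ref{fact:npoint} must commute with the right $G$-action and hence be left multiplication by some $c(x)\in G$, after which the cohomology equation in $\symG$ descends to $G$. Your write-up simply spells out this observation (including the routine converse) in more detail.
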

Fact \ref{fact:rightgsft} holds because the  permutation
$\gamma (x)$ in Fact \ref{fact:npoint} is here an
element of $\symG$ which 
commutes with the right $G$-action, and again  must be left multiplication
by some element $c(x)$ of $G$.

As noted in \cite{akmfactor}: every 2-point extension of SFTs is 
isomorphic to  the factor map of some  $G$-SFT
with $G=\Z/2\Z$  
 (Remark \ref{extensioncontext} gives one proof), but 
for $N >2$, 
an $N$-point extension of an SFT is not in general 
isomorphic to the factor map of a $G$-SFT. For example,  
if $\pi\colon  (Y,\rho)\to (X,\sigma)$ is the factor map of a $G$-SFT, 
then two $\rho$-periodic points with the same image must have the 
same $\rho$-period; but a 
3-point extension could collapse a fixed point and an orbit of 
size 2 to a fixed point. 

Nevertheless, the classification of $N$-point extensions 
of SFTs can be reduced to the classification of $G$-SFTs.

\begin{definition} \label{defn:fullextension}
  \cite[p.\ 493]{akmfactor} 
The {\it full extension} of   an $N$-point 
extension of 
a system 
$(X,\sigma)$, presented as above by 
$\tau \colon X\to \symN$, is the
self map of  
$ X\times \symN$ defined by the rule 
$(x, \alpha) \mapsto (\sigma(x), \tau(x) \alpha )$, 
(here 
$\tau(x) \alpha$ is the product in $\symN$) 
with right $\symN$-action  $h\colon (x,g)\mapsto (x,gh)$.    
\end{definition} 

%

%

\begin{proposition} \label{skewconjugate}
Let $\pi\colon  (Y,\rho) \to (X,\sigma)$ 
and $\pi'\colon  (Y',\rho') \to (X',\sigma') $ be  $N$-point extensions of two 
shifts of finite type $X$ and $X'$, presented by 
skewing functions $\tau,\tau'$ acting from the left as in
Definition \ref{defn:npoint}. 
Then the following are equivalent. 

\begin{enumerate} 
\item 
$\pi$ and $\pi'$ are conjugate factor maps.  

\item 
 There is a conjugacy $\phi \colon ( X,\sigma) \to (X',\sigma')$ 
and  a continuous $\gamma \colon  X \to \symN$ such that 
for all $x$ in $X$, 
\[
\tau' ( \phi (x))\  =\  
\gamma ( \sigma (x))\,  \tau (x)\, 
(\gamma (x)  )^{-1}.
\]

\item 
 The associated full extensions are 
conjugate right $\symN$-SFTs.
\end{enumerate} 
\end{proposition}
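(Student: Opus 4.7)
The plan is to prove the chain \eqref{item:1}$\Leftrightarrow$\eqref{item:2}$\Leftrightarrow$\eqref{item:3} by treating \eqref{item:1}$\Leftrightarrow$\eqref{item:2} as an immediate consequence of Fact \ref{fact:npoint} and doing the real work on \eqref{item:2}$\Leftrightarrow$\eqref{item:3} through explicit construction of conjugacies between the full extensions.

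For \eqref{item:2}$\Rightarrow$\eqref{item:3}, suppose $\phi\colon X\to X'$ and a continuous $\gamma\colon X\to \symN$ satisfy the cohomology relation of \eqref{item:2}. Define
\[
\Phi\colon X\times \symN\to X'\times \symN,\qquad \Phi(x,\alpha)=(\phi(x),\gamma(x)\alpha),
\]
where the second coordinate is the product in $\symN$. I would verify three things. First, $\Phi$ is a homeomorphism, with inverse $(x',\beta)\mapsto(\phi^{-1}(x'),\gamma(\phi^{-1}(x'))^{-1}\beta)$. Second, $\Phi$ intertwines the right $\symN$-actions: for $h\in \symN$, $\Phi(x,\alpha h)=(\phi(x),\gamma(x)\alpha h)=\Phi(x,\alpha)h$ by associativity in $\symN$. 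Third, $\Phi$ commutes with the full extension dynamics: comparing
\[
\Phi(\sigma(x),\tau(x)\alpha)=(\phi\sigma(x),\gamma(\sigma(x))\tau(x)\alpha)
\]
with
\[
(\sigma'\phi(x),\tau'(\phi(x))\gamma(x)\alpha),
\]
equality for all $\alpha$ is exactly $\phi\sigma=\sigma'\phi$ together with the cohomology identity of \eqref{item:2}.

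For \eqref{item:3}$\Rightarrow$\eqref{item:2}, start with a conjugacy $\Phi$ of the full extensions as right $\symN$-SFTs. Let $e\in \symN$ denote the identity and write $\Phi(x,e)=(\phi(x),\gamma(x))$ for continuous $\phi,\gamma$ (continuity of each coordinate of $\Phi$ at $(x,e)$). Right $\symN$-equivariance forces
\[
\Phi(x,\alpha)=\Phi((x,e)\alpha)=\Phi(x,e)\alpha=(\phi(x),\gamma(x)\alpha),
\]
so $\Phi$ is of the form used above. Since $\Phi$ is a homeomorphism $\symN$-equivariantly, $\phi$ is a homeomorphism $X\to X'$; and the shift-commutation identity, read off the second coordinate at $\alpha=e$, gives $\gamma(\sigma(x))\tau(x)=\tau'(\phi(x))\gamma(x)$, which rearranges to the cohomology equation of \eqref{item:2}. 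Checking the first coordinate gives $\phi\sigma=\sigma'\phi$, so $\phi$ is a conjugacy.

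The proof is essentially formal; I do not expect any step to present a real obstacle. The only place requiring care is the bookkeeping of left vs.\ right multiplication in $\symN$: the skewing function acts on the left of the second coordinate, whereas the $\symN$-action acts on the right, and the whole argument hinges on these two operations commuting by associativity. Provided one keeps this straight, the inverse formula, the shift intertwining, and the descent to \eqref{item:2} are all straightforward.
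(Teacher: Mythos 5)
Your proof is correct and follows essentially the same route as the paper, which disposes of the proposition in one line by citing Fact \ref{fact:npoint} for (1)$\iff$(2) and Fact \ref{fact:rightgsft} for (2)$\iff$(3). The only difference is that you write out explicitly the content of Fact \ref{fact:rightgsft} (the conjugacy $\Phi(x,\alpha)=(\phi(x),\gamma(x)\alpha)$ of full extensions and its reverse-engineering from right $\symN$-equivariance), which the paper leaves as a stated fact; your left/right bookkeeping is handled correctly.
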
 

\begin{proof} 
$(1)\iff (2)$  is Fact \ref{fact:npoint}, and 
$(3) \iff (2)$  is Fact \ref{fact:rightgsft}. 
\end{proof}

In  \cite[Theorem 4.2(B,C)] {akmfactor}, 
Adler, Kitchens and Marcus provided  easily computed
{group invariants
for almost topological conjugacy, and hence conjugacy,} 
  of
certain 
full extensions (the nonwandering $\mathcal S_N$-trans\-i\-tive
extensions){. (For examples of their use to distinguish 
 $N$-point extensions,  see \cite[p.\ 258]{rudolphcounts}).} 
More difficult algebraic invariants are 
required for a classification of $\mathcal S_N$-extensions 
up to conjugacy, or up to flow
equivalence \GFE.

\begin{remark} \label{extensioncontext} \cite[p.\ 494]{akmfactor}
  Let a full extension $\widetilde{\pi}\colon X\times \symN \to X$
  of an $N$-point extension
  $\pi\colon X \times \{1, \dots , N\}$ 
  be given using $\tau$ as in Definition  
  \ref{defn:fullextension}. Then the map
  $\beta\colon  X\times \symN \to X \times \{1, \dots , N\}$
  given by 
  $(x,g) \mapsto (x,g(1))$ is an $(N-1)!$-point extension and
  $\widetilde{\pi} = \pi\beta$ .  The map $\beta$
  is isomorphic to the map obtained by using in place of  $g(1)$
   the coset $gH$, where $H=\{h\in \symN \colon  h(1)=1 \}$. 
\end{remark}

\begin{remark}
  There are some differences between our presentation
  and terminology and what's in \cite{akmfactor,akmgroup}.
  We have only taken some of their beginning content --
  the papers were concerned with almost topological
  conjugacies of factor maps. 
  Some of our statements are only implicit in
  \cite{akmfactor,akmgroup}.
  The seminal measurable version of  Proposition \ref{skewconjugate}
  is explicit in Rudolph's paper \cite[Lemma 1]{rudolphcounts}.
  \end{remark}

More background on $G$-SFTs 
can be found in  \cite{akmfactor,akmgroup,BSullivan}
and \cite[Appendix A]{BoSc2}. 
The chosen action in 
\cite{BSullivan} should be a left rather than right action, 
as explained in \cite[Appendix A]{BoSc2}.

\section{PET sofic shifts} \label{sec:pet}

In this section, we will 
use full extensions to reduce the 
FE classification of a certain class of 
AFT shifts to the flow equivalence  classification of 
 $G$-SFTs, for which  complete invariants 
are known \GFE.  First, we must address a technical 
point involving left vs. right actions.

A square matrix $A$ over $\Z_+G$ presents a $G$-SFT
(equivalently, a $G$ extension of an SFT) in a
natural way: 
the matrix $A$ gives  a labeling of edges of
a directed graph by elements of $G$, say $e\mapsto \ell (e)$.
The graph defines the usual edge SFT and the labeling
defines a skewing function $\tau_x$: an element of $G$ 
is multiplied by $\ell (x_0)$. This is a left $G$-SFT if 
$\tau_x \colon g \mapsto g\ell (x_0) $. 
The {\it left} $G$-SFTs
presented by $A$ and
$B$ are topologically conjugate if and only if
the matrices $A$ and $B$ are strong shift equivalent
over $\Z_+G$ 
(see \cite{BSullivan}
and \cite[Appendix A]{BoSc2}). 
This leads us to a natural definition. 

\begin{definition} 
The {\it left full extension} of an
$N$-point extension is defined as 
in Definition \ref{defn:fullextension}, with 
the following changes: $\tau$ is chosen to act from the right, 
and then $\symN$ is taken to act from the left. 
(So, the left full extension of an $N$-point extension 
of an SFT is a left $G$-SFT, with $G=\symN$.) 
\end{definition}

Fact \ref{routine}(2) does not distinguish between left 
and right; the same class of extensions (up to 
topological conjugacy) is presented with functions 
skewing from the right as for functions skewing 
from the left. 
Arguments for left $G$-SFTs mimicking
those for right $G$-SFTs in the last section
then lead to the following analogue of Proposition
\ref{skewconjugate}, with an additional condition (4).

\begin{proposition} \label{skewconjugateleftaction}
Let $\pi\colon  (Y,\rho) \to (X,\sigma)$ 
and $\pi'\colon  (Y',\rho') \to (X',\sigma') $ be  $N$-point extensions of two 
shifts of finite type $X$ and $X'$, presented by 
skewing functions $\tau,\tau'$ as in
Definition \ref{defn:npoint},
but with $\tau$ and $\tau'$ skewing from the right.  
Then the following are equivalent. 

\begin{enumerate} 
\item 
$\pi$ and $\pi'$ are conjugate factor maps.  

\item 
 There is a conjugacy $\phi \colon ( X,\sigma) \to (X',\sigma')$ 
and  a continuous $\gamma \colon  X \to \symN$ such that 
for all $x$ in $X$, 
\[
\tau' ( \phi (x))\  =\  
(\gamma (x)  )^{-1}\,  \tau (x)\, 
\gamma ( \sigma (x)). 
\]

\item 
 The associated full extensions are 
conjugate left $\symN$-SFTs. 
\item 
If the left $\symN$-SFT full extensions are presented by matrices 
$A,B$ over $\Z_+\symN$, then $A$ and $B$ are strong shift equivalent 
over $\Z_+\symN$. 
\end{enumerate} 
\end{proposition}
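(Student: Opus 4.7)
The plan is to mirror the proof of Proposition \ref{skewconjugate}, systematically switching left-action conventions for right-action ones, and then to append $(3)\iff(4)$ by invoking the matrix characterization of topological conjugacy of $G$-SFTs.

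For $(1)\iff(2)$, I would note that specifying a skewing function $\tau\colon X\to\symN$ acting from the right is the same data as specifying a left-skewing function in the opposite group of $\symN$. Under this correspondence the cohomology relation $\tau'(\phi(x))=\gamma(\sigma(x))\tau(x)\gamma(x)^{-1}$ of Fact \ref{fact:npoint} becomes $\tau'(\phi(x))=\gamma(x)^{-1}\tau(x)\gamma(\sigma(x))$, since both formulas describe the same pointwise composition: first $\gamma(x)^{-1}$, then $\tau(x)$, then $\gamma(\sigma(x))$. Alternatively, one can rerun the calculation directly, now writing the conjugating homeomorphism $X\times\{1,\dots,N\}\to X'\times\{1,\dots,N\}$ as $(x,k)\mapsto(\phi(x), k\cdot\gamma(x)^{-1})$ and matching the induced skewings.

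For $(2)\iff(3)$, I would establish and apply the left-action analogue of Fact \ref{fact:rightgsft}. With $\tau$ skewing from the right, the left full extension has shift $(x,g)\mapsto(\sigma(x),g\tau(x))$, which commutes with the left action $h\cdot(x,g)=(x,hg)$ precisely because $\tau(x)$ multiplies on the right. Any conjugacy of two such left $\symN$-SFTs must intertwine the left action and so take the form $(x,g)\mapsto(\phi(x), g c(x)^{-1})$ for some continuous $c\colon X\to\symN$; the shift-intertwining condition then reduces to $\tau'(\phi(x))=c(x)\tau(x)c(\sigma(x))^{-1}$, which is exactly (2) on setting $\gamma=c^{-1}$. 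Finally, $(3)\iff(4)$ is immediate from the theorem cited in the paragraph preceding the proposition: two left $G$-SFTs presented by matrices $A,B$ over $\Z_+G$ are topologically conjugate if and only if $A$ and $B$ are strong shift equivalent over $\Z_+G$, applied with $G=\symN$.

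The only real hazard is bookkeeping with conventions; once right-skewing is paired correctly with the left $\symN$-action, which in turn is encoded by matrices over $\Z_+\symN$, each implication reduces essentially to a calculation already carried out for the right-action case in Section \ref{sec:np}.
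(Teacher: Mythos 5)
Your proposal is correct and follows exactly the route the paper intends: the paper gives no explicit proof, stating only that one mimics the right-action arguments of Section \ref{sec:np}, with Fact \ref{fact:leftgsft} playing the role of Fact \ref{fact:rightgsft} for $(2)\iff(3)$ and the strong shift equivalence characterization of conjugacy of left $G$-SFTs giving $(3)\iff(4)$. Your bookkeeping of the conventions (right skewing paired with the left $\symN$-action, conjugacies of the form $(x,g)\mapsto(\phi(x),g\,c(x))$) is the correct way to carry this out and matches the cohomology equation \eqref{cohomologyeqnforleftactionofG}.
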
 

If one remains with a presentation of an 
$N$-point presentation with $\tau$ skewing from the left, 
one can still reduce to a SSE-$\Z_+$ invariant. Let $A$ and $B$ 
be $m\times m$ matrices over $\Z_+G$ presenting full extensions for 
$N$-point presentations $\pi, \pi'$ with $\tau$ skewing from the left. 
As in \cite[Appendix A]{BoSc2}, 
let  $A^{\text{opp}}$ to be  the $m\times m$ matrix such that 
$A(i,j)  = \sum_g n_g g \implies A^{\text{opp}} (i,j)  = \sum_g n_g
g^{-1}$, and likewise define   $B^{\text{opp}}$. Then $\pi$ and $\pi'$ 
will be isomorphic $N$-point extensions if and only if  
$A^{\text{opp}}$ and $B^{\text{opp}}$ are SSE-$\Z_+$ (i.e., 
define isomorphic left $\symN$-SFTs).   

The condition (2) in Proposition \ref{skewconjugateleftaction} 
reflects the following analogue of 
Fact  \ref{fact:rightgsft}. 

\begin{fact} \label{fact:leftgsft}
Two left $G$-SFTs $(X,\sigma)$ and $(X',\sigma')$ with left $G$-actions $\beta$ and $\beta'$ are isomorphic if and only if there 
is a conjugacy $\phi\colon  X \to X'$ and 
 a continuous $c \colon  X \to G$ such that 
for all $x$ in $X$, 
\begin{equation} \label{cohomologyeqnforleftactionofG} 
\beta' ( \phi (x))\  =\  
(c (x)  )^{-1} \, \beta (x)\, c ( \sigma (x))
 \  
\end{equation}
(where the right hand side is the product in the group $G$). 
\end{fact}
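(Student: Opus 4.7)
The plan is to mirror the argument for right $G$-SFTs (Fact \ref{fact:rightgsft}), with the direction of multiplication reversed. First I parametrize a left $G$-SFT. Given $(X,\sigma)$ with free continuous left $G$-action, the quotient $X\to X/G$ admits a continuous section (assemble local sections over a clopen partition of $X/G$; this is standard using finiteness of $G$, freeness, and zero-dimensionality of $X$). Such a section identifies $X$ with $(X/G)\times G$ in such a way that the left $G$-action becomes $h\colon(\overline{x},g)\mapsto(\overline{x},hg)$. Since $\sigma$ commutes with left multiplication and covers the induced shift on $X/G$, in these coordinates it is forced to take the form $(\overline{x},g)\mapsto(\sigma(\overline{x}),g\beta(\overline{x}))$ for a continuous $\beta\colon X/G\to G$; this $\beta$ is the skewing function encoding the left $G$-SFT (corresponding, via the section, to the function $\beta$ in the statement). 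A parallel parametrization is fixed for $(X',\sigma')$.

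Next, suppose $\Phi\colon X\to X'$ is an isomorphism of left $G$-SFTs. Commutation with the left $G$-actions forces $\Phi(\overline{x},g)=(\phi(\overline{x}),gc(\overline{x}))$ for continuous $\phi\colon X/G\to X'/G$ and $c\colon X/G\to G$. Imposing $\Phi\circ\sigma=\sigma'\circ\Phi$ yields $\phi\sigma=\sigma'\phi$ in the first coordinate (so $\phi$ is a conjugacy of quotient shifts), and in the second coordinate the identity
\[
g\,\beta(\overline{x})\,c(\sigma(\overline{x}))\;=\;g\,c(\overline{x})\,\beta'(\phi(\overline{x})).
\]
Cancelling $g$ and rearranging gives $\beta'(\phi(\overline{x}))=c(\overline{x})^{-1}\beta(\overline{x})c(\sigma(\overline{x}))$, which is the cocycle relation in the statement (with $c$ regarded as a $G$-invariant map on $X$ via composition with the quotient).

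Conversely, given $\phi$ and $c$ satisfying this relation, the prescription $\Phi(\overline{x},g):=(\phi(\overline{x}),gc(\overline{x}))$ defines a homeomorphism which, by the same computation read backwards, intertwines both the shift and the left $G$-action, and is therefore an isomorphism of left $G$-SFTs. The only non-routine ingredient is the existence of the continuous section in the first step, which is standard in this zero-dimensional setting; every subsequent verification is a direct computation completely parallel to the right-action case of Fact \ref{fact:rightgsft}, so no step presents a substantive obstacle.
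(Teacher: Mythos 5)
Your argument is correct and is essentially the proof the paper intends: the paper states that Fact \ref{fact:leftgsft} follows by ``mimicking'' the derivation of Fact \ref{fact:rightgsft}, which rests on presenting the $G$-SFT as a $|G|$-point extension via $|G|$ disjoint continuous sections (Facts \ref{routine}) and observing that any fiber permutation commuting with the $G$-action must be translation by a group element. Your trivialization $X\cong (X/G)\times G$ by a continuous section is exactly that presentation, and the remaining computations coincide with the paper's (with left and right reversed), so there is nothing substantively different here.
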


It is elementary but important to note 
that for a nonabelian group $G$, 
the cohomology equations 
\eqref{cohomologyeqnforrightactionofG} and 
\eqref{cohomologyeqnforleftactionofG} 
are not equivalent. 
Here is a simple example  (distilled from \cite[Example A.4]{BoSc2}). 

\begin{example}\label{leftvsrightexample}
Let $G$ be nonabelian. Let $a,b$ be group elements such that $ab\neq ba$.
Let $d=(ab)^{-1}$; then $abd = e \neq dba$.
 Let $X=\{x, \sigma (x), \sigma^2(x)\}$, a single orbit containing three
 points. 
Define $\beta(x)=a,\beta(\sigma(x))=b, \beta(\sigma^2(x))=d$ and
$\beta'(x)=\beta'(\sigma(x)) = \beta'(\sigma^2(x))=e$.  Then there is a 
function $c(x)$
satisfying 
\eqref{cohomologyeqnforrightactionofG},  
but there is no
$c(x)$ 
satisfying \eqref{cohomologyeqnforleftactionofG}. 
\end{example}

We now turn to flow equivalence. If $X$ is a left $G$-SFT, then the left $G$-action induces a left $G$-action on $\mt X$. Two left $G$-SFTs are $G$-flow equivalent if there exists a flow equivalence $\psi\colon \mt X\to\mt X'$ which intertwines the $G$-actions.

\begin{proposition} \label{feequiv}
Suppose 
$\pi$ and 
$\pi'$ are 
$N$-point extensions of SFTs.
Then the following are equivalent. 
\begin{enumerate} 

\item 
As factor maps, $\pi$ and $\pi'$ are flow equivalent. 

\item 
The left full extensions of $\pi$ and $\pi'$  
are
$\symN$-flow equivalent  left 
$\symN$-SFTs. 
\end{enumerate} 
\end{proposition}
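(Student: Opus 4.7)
Write $\widetilde Y = X \times \symN$ for the domain of the left full extension $\widetilde\pi$ of $\pi$, and similarly $\widetilde{Y'}$ for $\pi'$. The subgroup $H = \{h \in \symN : h(1) = 1\}$ acts on $\widetilde Y$ by left multiplication on the second coordinate; this action commutes with the shift (being a subgroup of the left $\symN$-action), and the quotient $H\backslash \widetilde Y$ is canonically identified with $Y = X\times\{1,\dots,N\}$, realizing the factorization $\widetilde\pi = \pi\circ\beta$ of Remark \ref{extensioncontext}.

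\emph{Direction (2) implies (1).} A $\symN$-equivariant flow equivalence $F\colon \mt{\widetilde Y} \to \mt{\widetilde{Y'}}$ commutes with both the $H$-action and the full $\symN$-action. Since each of these actions commutes with the shift, passing to the corresponding quotients of mapping tori yields a flow equivalence $\psi\colon \mt Y \to \mt{Y'}$ (from the $H$-quotient) and a flow equivalence $\phi\colon \mt X \to \mt{X'}$ (from the full $\symN$-quotient), and these intertwine $\mt\pi$ with $\mt{\pi'}$ by functoriality of the quotient.

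\emph{Direction (1) implies (2).} Reduce flow equivalence of factor maps to its generators: isomorphism of factor maps, together with the lifted symbol-expansion construction of Lemma \ref{lemma:1}. Proposition \ref{skewconjugateleftaction} already translates isomorphism of $N$-point extensions into $\symN$-equivariant isomorphism of their left full extensions, which is in particular a $\symN$-flow equivalence. For a symbol expansion of a letter $a\in\alp{X}$, apply Lemma \ref{lemma:1} to $\widetilde\pi$: the induced move expands the entire fiber $\widetilde\pi^{-1}(a) = \{(a,g) : g\in\symN\}$, which is a single free $\symN$-orbit inside $\alp{\widetilde Y}$. Hence the resulting expansion is $\symN$-equivariant and produces a $\symN$-flow equivalence between the expanded left full extensions.

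\emph{Main obstacle.} The substantive technical content is the generation statement used in the second direction: that every flow equivalence of factor maps decomposes as a composition of isomorphisms and simultaneous lifted symbol expansions as in Lemma \ref{lemma:1}. This is the natural factor-map analogue of the Parry-Sullivan generation theorem for shift spaces; its proof carries out the cross-section argument on $\mt X$ and $\mt Y$ simultaneously, selecting cross-sections on $\mt Y$ that project via $\mt\pi$ onto cross-sections on $\mt X$, which is possible because $\pi$ is everywhere $N$-to-$1$. Once this generation is in place, the per-generator verification above yields the proposition.
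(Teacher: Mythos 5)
Your proposal is correct and follows essentially the same route as the paper: for $(2)\Rightarrow(1)$ the paper likewise recovers $\mt Y$, $\mt X$ and $\mt\pi$ from the full extension via the coset/quotient picture of Remark \ref{extensioncontext}, and for $(1)\Rightarrow(2)$ it likewise reduces to the generators (conjugacy of factor maps, handled by Proposition \ref{skewconjugateleftaction}, and a symbol expansion of the $N$-point extension, which lifts to an equivariant expansion of a full fiber), resting on the same Parry--Sullivan-type generation statement that you correctly single out as the substantive step and that the paper also only sketches. The only cosmetic difference is that the paper defines symbol expansion directly on the skewing-function presentation (with a chosen $\tau_\star$ at the new symbol) rather than via Lemma \ref{lemma:1} applied to $\widetilde\pi$.
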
 

\begin{proof} 
We will first prove the implication $(1)\implies (2)$. For this we define \emph{symbol expansion} for $N$-point extensions. Let $\pi\colon (Y,\rho)\to (X,\sigma)$ be an $N$-point extension of a shift space, presented by the skewing function $\tau$ as in
Definition \ref{defn:npoint}, but with $\tau$ skewing from the right, and with $\pi\colon Y\to X$ a one-block code. 
Let $a\in\al(X)$, let $\star$ be a symbol which does not belong to $\al(X)$, and let 
$\tau_\star\in \symN$. Then the $N$-point extension 
$\expansion{a,\tau_\star}{\pi}\colon (\expansion{a,\tau_\star}{Y},\expansion{a,\tau_\star}{\rho})\to (\expansion{a,\tau_\star}{X},\expansion{a,\tau_\star}{\sigma})$ where 
$\expansion{a,\tau_\star}{X}=\expansion{a}{X}$ (cf. Definition \ref{def:expansion} and 
the remark just after the proof of Lemma \ref{lemma:1}), 
$\expansion{a,\tau_\star}{\sigma}=\sigma_{\tilde{\al}}$, 
$\expansion{a,\tau_\star}{Y}=\expansion{a,\tau_\star}{X}\times \{1,\dots,N\}$, 
$\expansion{a,\tau_\star}{\rho}\colon \expansion{a,\tau_\star}{Y}\to\expansion{a,\tau_\star}{Y}$ is given by $\expansion{a,\tau_\star}{\rho}((x,k))=(\expansion{a,\tau_\star}{\sigma}(x),\expansion{a,\tau_\star}{\tau}_x(k))$ with $\expansion{a,\tau_\star}{\tau}_x=\tau_{x'}$ if $x=\iota_a(x')$, and $\expansion{a,\tau_\star}{\tau}_x=\tau_\star$ if $x=\sigma_{\tilde{\al}}(\iota_a(x'))$, and $\expansion{a,\tau_\star}{\pi}\colon \expansion{a,\tau_\star}{Y}\to\expansion{a,\tau_\star}{X}$ is given by $\expansion{a,\tau_\star}{\pi}(x,k)=x$, is called a symbol expansion of $\pi\colon (Y,\rho)\to (X,\sigma)$. An argument similar to the one used in the proof of the Parry-Sullivan Theorem \cite{parrysullivan} shows that flow equivalence of $N$-point extensions of shift spaces is generated by conjugacy and symbol expansions. It follows from Proposition \ref{skewconjugate} that conjugacy of $N$-point extensions gives conjugacy of the full extensions, and it is easy to check the full extensions of $\pi\colon (Y,\rho)\to (X,\sigma)$ and $\expansion{a,\tau_\star}{\pi}\colon (\expansion{a,\tau_\star}{Y},\expansion{a,\tau_\star}{\rho})\to (\expansion{a,\tau_\star}{X},\expansion{a,\tau_\star}{\sigma})$ are flow equivalent. It follows that $(1)\implies (2)$.

We will now prove $(2)\implies (1)$. If $\pi\colon (Y,\rho)\to (X,\sigma)$ is an $N$-point extension, then we can recover $\maptorus{Y}$, $\maptorus{X}$, and $\maptorus{\pi}\colon  \maptorus{Y}\to \maptorus{X}$ from $\maptorus{(X\times\symN)}$ as in Remark \ref{extensioncontext}. It follows that $(2)\implies (1)$.
	
\end{proof} 

\begin{remark} \label{gfeinvariants} 
For a finite group $G$, every $G$-SFT can be presented by 
a square matrix over $\Z_+G$ as described above. 
Complete algebraic invariants for $G$-flow equivalence 
are known, by \cite{BSullivan} in the case the extension is mixing 
and by \GFE{} in general. 
For further discussion see 
\cite{bce:gfe,BoSc2}. 
\end{remark}

\label{sec: pettype}

\begin{definition} \label{defn:pet}
An irreducible sofic  shift  
is {\it point extension type} (PET) if 
it has a  Fischer cover $\pi$ 
such that 
for each $k$ in 
$\mathsf{MultiCard}(\pi )$, the set
$\mulsetdown_k (\pi) $ is a 
closed (and hence a subshift).  
\end{definition} 

In Definition \ref{defn:pet}, it would be equivalent to require 
each $\mulset_k (\pi) $ to be closed (hence a subshift).
Note 
that among irreducible shifts, a PET sofic shift must be AFT
(by Theorem \ref{thm:aftconditions}{\it (vii)}), and a near Markov shift must trivially be PET, with both inclusions proper.
We will justify the PET name with Lemma \ref{petlemma}, whose proof   
appeals to the following result of Jung. 
\begin{fact} \cite{JungOpen} \label{biclosingfact} 
A constant \ntoone{k} biclosing factor  map between  
subshifts is a $k$-point  extension. 
\end{fact}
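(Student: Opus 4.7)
The plan is to construct $k$ continuous sections $\tau_1, \dots, \tau_k \colon X \to Y$ of $\pi$ with pairwise disjoint images; by Fact \ref{routine}(2) this makes $\pi$ a $k$-point extension, since $k$ distinct lifts of each fiber automatically exhaust it. The construction proceeds in three stages: local injectivity, local triviality, and global trivialization.

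First, biclosing together with the constant-to-one property should show that $\pi$ is a local homeomorphism. Choose $L$ larger than the memory and anticipation of both the right- and left-closing conditions for $\pi$. If $y', y'' \in Y$ agree on coordinates $[-L,L]$ and satisfy $\pi(y') = \pi(y'')$, then right-closing propagates the agreement to all positive coordinates, and left-closing to all negative coordinates, forcing $y' = y''$. Thus for every $y \in Y$ the cylinder $C(y) = [y_{-L}\cdots y_L]$ is a clopen neighborhood on which $\pi$ is injective; as a continuous injection from a compact space to a Hausdorff one, $\pi|_{C(y)}$ is a homeomorphism onto its (closed) image.

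Next I would upgrade this to local triviality of the $k$-sheeted fiber structure. Given $x \in X$ with preimages $y_1, \dots, y_k$, pick pairwise disjoint cylinders $C_i \ni y_i$ as above. The complement $K = Y \setminus \bigsqcup_i C_i$ is compact with $x \notin \pi(K)$, so by zero-dimensionality a clopen neighborhood $U$ of $x$ can be chosen disjoint from $\pi(K)$. The constant-to-one hypothesis then forces $\pi^{-1}(U) = \bigsqcup_i (\pi^{-1}(U) \cap C_i)$ with each piece mapping homeomorphically onto $U$. By compactness of $X$, finitely many such neighborhoods cover $X$ and can be refined (using the Boolean algebra they generate) to a finite clopen partition $V_1, \dots, V_\ell$ of $X$ with the same local trivialization property. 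Choosing an arbitrary ordering of the $k$ sheets over each $V_s$ and assembling across the partition produces the desired sections; these are globally continuous because the $V_s$ are pairwise disjoint and clopen.

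The main obstacle is the first stage, where one must carefully extract uniform local injectivity from the window-size bounds given by left- and right-closing; once $\pi$ is known to be a local homeomorphism, the remaining steps are a standard covering-space trivialization exploiting the clopen basis of $X$.
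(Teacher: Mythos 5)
The paper offers no proof of this statement: it is quoted from Jung \cite{JungOpen} (with Nasu \cite{NasuConstant} credited for the irreducible SFT case), so there is no in-paper argument to compare against. Your self-contained proof is correct. The step you flag as delicate --- passing from the biclosing hypothesis to a uniform window $L$ such that two $\pi$-preimages of the same point agreeing on coordinates $[-L,L]$ must coincide --- does go through for arbitrary subshifts by the usual compactness argument: if no such $L$ existed, a limit of counterexamples would produce two distinct left-asymptotic (or right-asymptotic) points with equal image, contradicting right- (or left-) closing; iterating the uniform statement then propagates agreement to all coordinates. From there your argument is the standard trivialization of a finite-sheeted covering over a compact zero-dimensional base: injectivity of $\pi$ on central cylinders, separation of the $k$ preimages of a point by disjoint clopen sets $C_i$, passage to a clopen $U$ in the base avoiding the compact set $\pi\bigl(Y\setminus\bigsqcup_i C_i\bigr)$ so that each $C_i\cap\pi^{-1}(U)$ maps bijectively (hence, by compactness, homeomorphically) onto $U$, and refinement of a finite subcover to a clopen partition over which the $k$ sections are chosen and glued; Fact \ref{routine}(2) then converts the disjoint sections into the $k$-point extension structure. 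One cosmetic quibble: the phrase ``local homeomorphism'' overstates what the cylinder argument gives, namely injectivity of $\pi$ on $C(y)$ but not openness of $\pi(C(y))$; fortunately you never use openness, since the trivializing neighborhoods $U$ are manufactured downstairs directly. In substance this is the covering-space mechanism underlying Jung's Proposition 4.5, so your route agrees with the cited source rather than constituting a genuinely different one.
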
  
Nasu \cite{NasuConstant} proved  Fact \ref{biclosingfact} in the case 
the subshifts are irreducible SFTs. 
The general result is contained in
\cite[Prop. 4.5]{JungOpen}.

\begin{lemma} \label{petlemma}
Suppose $Y$ is an irreducible PET sofic shift 
with Fisher cover $\pi \colon  X_A \to Y$. 
Then for each 
$k$ in $\mathsf{MultiCard}(\pi )$, 
the restriction
$\pi \colon 
\mulset_k (\pi) \to 
\mulsetdown_k (\pi) $ 
is a $k$-point extension of a 
shift of finite type. \end{lemma}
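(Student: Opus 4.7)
The plan is to restrict $\pi$ to the multiplicity strata, verify the hypotheses of Jung's theorem to obtain a $k$-point extension, and separately verify that the base of the extension is a shift of finite type.

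First, I would observe that since $Y$ is PET, the set $\mulsetdown_k(\pi)$ is closed by hypothesis, and hence $\mulset_k(\pi) = \pi^{-1}(\mulsetdown_k(\pi))$ is closed as well. Both sets are shift-invariant, so they are subshifts, and $\pi$ descends to a surjective factor map
\[
\pi_k := \pi|_{\mulset_k(\pi)} \colon \mulset_k(\pi) \to \mulsetdown_k(\pi)
\]
which is exactly $k$-to-$1$ at every point by the definitions.

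Next, I would invoke biclosingness. As the text notes just after Definition~\ref{defn:pet}, any PET sofic shift is AFT, so by Theorem~\ref{thm:aftconditions}(iii) the Fischer cover $\pi$ is left-closing; it is right-resolving, hence right-closing, by construction. Restriction of a biclosing factor map to a subshift of its domain is again biclosing, so $\pi_k$ is biclosing and constant $k$-to-$1$. Jung's theorem (Fact~\ref{biclosingfact}) then yields that $\pi_k$ is a $k$-point extension of $\mulsetdown_k(\pi)$.

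It remains to argue that $\mulsetdown_k(\pi)$ is a shift of finite type, not merely sofic. For this, I would exploit a uniform resolving window $N$ coming from the biclosing property of $\pi$: any two preimages of a common point which agree on a block of length $2N+1$ must coincide. This lets one form the pair SFT $\{(x,x') \in X_A \times X_A : \pi(x) = \pi(x')\}$, and then use the PET hypothesis that each multiplicity stratum is closed to cut out $\mulset_k(\pi)$ inside $X_A$ by finitely many forbidden words, so that $\mulset_k(\pi)$ is itself SFT. Fact~\ref{routine}(3) applied to the $k$-point extension $\pi_k$ then transfers SFT-ness to the base $\mulsetdown_k(\pi)$.

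The first two steps are essentially just unpacking definitions and citing Jung's theorem. The main obstacle is this last step: upgrading the purely topological closedness furnished by the PET hypothesis into a genuine finite-type presentation of the multiplicity stratum via the biclosing window-of-resolution.
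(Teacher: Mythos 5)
Your overall architecture matches the paper's: Jung's result (Fact \ref{biclosingfact}) supplies the $k$-point extension structure, and Fact \ref{routine}(3) transfers SFT-ness from $\mulset_k(\pi)$ down to the base $\mulsetdown_k(\pi)$. The gap is in the step you yourself flag as the main obstacle: proving that $\mulset_k(\pi)$ is a shift of finite type. Your proposed mechanism --- use the biclosing window to ``cut out $\mulset_k(\pi)$ inside $X_A$ by finitely many forbidden words'' --- does not work as stated. Whether $x$ lies in $\mulset_k(\pi)$ is a global existence statement (there must exist exactly $k-1$ other points with the same image under $\pi$), and no finite central window of $x$ detects this; indeed, the remark following Corollary \ref{corollary:prereduction} records that even for AFT shifts the closed set $\mulset(\pi)$ can fail to be SFT, so closedness together with a resolving window cannot by itself yield a finite-type presentation inside $X_A$. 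The PET hypothesis has to be brought to bear on the pair shift, not on $X_A$ directly.

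The paper's completion of this step runs as follows. Let $E(\pi)=\{(w,x)\in X_A\times X_A\colon \pi(w)=\pi(x)\}$, which is SFT since $\pi$ is a one-block code on the SFT $X_A$, and set $E_k(\pi)=E(\pi)\cap\bigl(\mulset_k(\pi)\times\mulset_k(\pi)\bigr)$. Because the fibers of the right-resolving cover $\pi$ are uniformly bounded, only finitely many $E_j(\pi)$ are nonempty, and the PET hypothesis makes them a partition of $E(\pi)$ into disjoint closed shift-invariant sets; hence each $E_k(\pi)$ is clopen in the SFT $E(\pi)$ and is therefore itself SFT. The first-coordinate projection $p_k\colon E_k(\pi)\to\mulset_k(\pi)$ is constant \ntoone{k} and biclosing (since $\pi$ is), so it is a $k$-point extension by Fact \ref{biclosingfact}, and Fact \ref{routine}(3) then gives that $\mulset_k(\pi)$ is SFT. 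With that in hand, your steps on the restriction $\pi\colon\mulset_k(\pi)\to\mulsetdown_k(\pi)$ and the final appeal to Fact \ref{routine}(3) complete the proof exactly as in the paper.
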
  

\begin{proof} 
For $k \in\N$ we define 
\begin{equation} \label{Epikref}
E_k(\pi ) = E(\pi )\cap \big(    \mulset_k(\pi ) \times \mulset_k(\pi )\big), 
\end{equation} 
and let
\begin{equation} \label{Epiref}
    E(\pi ) = \{(w,x)\in X_A\times X_A\colon  \pi (w) = \pi(x) \}.
\end{equation}
Then $E_k(\pi)\ne\emptyset$ only if $k\in \mathsf{MultiCard}(\pi )$ or $k=1$, and $E_1(\pi)= \{ (x,x) \colon x\in X_A \} $. 
The set $E( \pi )$ of $\pi$ is 
a shift of finite type in $X_A\times X_A$, and equals the disjoint union of 
the shifts $E_k ( \pi )$. 
Therefore each 
$E_k ( \pi )$ is also SFT. 
For $k\in \mathsf{MultiCard}(\pi ) $,  
define $p_k\colon  E_k ( \pi )\to \mulset_k (\pi )$ 
by $p_k\colon (x,y) \mapsto x$. 
Then $p_k$ is 
biclosing
(because $\pi$ is biclosing), and 
everywhere \ntoone{k}. Then by Fact 
\ref{biclosingfact} , $p_k$ is a $k$-point 
extension. Because $E_k(\pi)$ is SFT, it follows 
that $\mulset_k (\pi )$ is SFT. Then the  same 
argument applied to the restriction  
$\pi \colon
\mulset_k (\pi) \to 
\mulsetdown_k (\pi) $  shows this 
map is a $k$-point extension of a shift of finite type. 
\end{proof} 

%
%
%
%
%
%
%

\begin{theorem} \label{thm:pettype}
A subshift flow equivalent to an  irreducible PET sofic 
shift must also be irreducible PET sofic.
 
Suppose $X$ and $X'$ are irreducible PET 
sofic 
shifts with 
Fisher covers $\pi\colon  X_A \to X$ and $\pi' \colon  X_{A'}\to X'$.  
Then the following are equivalent. 
\begin{enumerate} 
\item 
The shifts $X$ and $X'$ are flow equivalent. 
\item 
The SFTs $X_A,X_{A'}$ are flow equivalent; 
$$\mathsf{MultiCard}(\pi )=\mathsf{MultiCard}(\pi ');$$    
and for each $k$,  
 the left full extensions of $\pi |_{\mulset_k (\pi )}$ and 
$\pi' |_{\mulset_k (\pi' )}$ are 
$\symk$-flow equivalent left $\symk$-SFTs. 
\end{enumerate} 
\end{theorem}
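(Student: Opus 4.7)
The plan is to localize the Reduction Theorem \ref{theorem:reduction} to each multiplicity level, using Lemma \ref{petlemma} to identify each restriction $\pi|_{\mulset_k(\pi)} \to \mulsetdown_k(\pi)$ as a $k$-point extension of an SFT, and then Proposition \ref{feequiv} to translate flow equivalence of such extensions into $\symk$-flow equivalence of their left full extensions.

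I first dispose of the claim that the PET property is preserved under flow equivalence. Every PET shift is AFT, and the AFT property is flow equivalence invariant (Fujiwara--Osikawa, as cited in the proof of Proposition \ref{nearminv}), so any shift flow equivalent to $X$ is itself an irreducible sofic AFT shift with Fischer cover $\pi'$. By Theorem \ref{theorem:prereduction} the Fischer covers are flow equivalent as factor maps. As indicated in the proof of Corollary \ref{corollary:prereduction}, the subflow of $\maptorus{X}$ consisting of flow lines with exactly $k$ preimage flow lines under $\maptorus{\pi}$ (with winding numbers summed in the periodic case) is intrinsically determined by the homeomorphism class of $\maptorus{\pi}$. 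Hence closedness of $\mulsetdown_k(\pi)$ in $X$ --- equivalent to closedness of the corresponding subflow in $\maptorus{X}$ --- transfers to the corresponding subflow in $\maptorus{X'}$, forcing $\mulsetdown_k(\pi')$ to be closed in $X'$. Corollary \ref{corollary:prereduction} also supplies $\mathsf{MultiCard}(\pi) = \mathsf{MultiCard}(\pi')$.

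For $(1)\Rightarrow(2)$, the Reduction Theorem delivers a flow equivalence of $X_A$ and $X_{A'}$ and a flow equivalence of the restricted factor maps $\pi|_{\mulset(\pi)}$ and $\pi'|_{\mulset(\pi')}$. Because the decomposition $\mulset(\pi) = \bigsqcup_k \mulset_k(\pi)$ is intrinsic to $\maptorus{\pi}$ and (by PET) consists of closed pieces, that flow equivalence splits as a disjoint union of flow equivalences between $\pi|_{\mulset_k(\pi)}$ and $\pi'|_{\mulset_k(\pi')}$ for each $k$. Lemma \ref{petlemma} identifies these restrictions as $k$-point extensions of SFTs, and Proposition \ref{feequiv} then produces the required $\symk$-flow equivalence of left full extensions. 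For $(2)\Rightarrow(1)$, I reverse the process: for each $k\in \mathsf{MultiCard}(\pi)$, Proposition \ref{feequiv} converts $\symk$-flow equivalence of left full extensions into flow equivalence of the $k$-point extension factor maps, and the disjoint closed decomposition allows these to be assembled into a single flow equivalence of $\pi|_{\mulset(\pi)}$ and $\pi'|_{\mulset(\pi')}$. Coupled with the flow equivalence of $X_A$ and $X_{A'}$, this is condition (3) of the Reduction Theorem, yielding (1).

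The main obstacle I anticipate is the bookkeeping around the decomposition $\mulset(\pi) = \bigsqcup_k \mulset_k(\pi)$: one must justify both that each $\mulset_k(\pi)$ is detected intrinsically from $\maptorus{\pi}$ (so that any flow equivalence of factor maps respects the splitting) and that the pieces are closed (so the mapping tori of the restrictions assemble without issue). The intrinsic characterization already exploited in Corollary \ref{corollary:prereduction} handles the first point, and the PET hypothesis guarantees the second. Everything else is a formal combination of the Reduction Theorem with Lemma \ref{petlemma} and Proposition \ref{feequiv}.
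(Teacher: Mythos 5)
Your proposal is correct and takes essentially the same route as the paper: the paper's proof likewise combines the Reduction Theorem \ref{theorem:reduction} with Proposition \ref{feequiv} (implicitly via Lemma \ref{petlemma}), passing through the intermediate condition that each $\pi|_{\mulset_k(\pi)}$ is flow equivalent to $\pi'|_{\mulset_k(\pi')}$. The only difference is that you spell out the bookkeeping the paper leaves implicit --- that the strata $\mulset_k$ are intrinsic to $\maptorus{\pi}$ and clopen in $\mulset(\pi)$, so the level-by-level flow equivalences split off and reassemble, and that PET invariance follows from this --- which the paper dismisses as ``clear.''
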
 

\begin{proof} 
The invariance of the irreducible sofic PET class under 
flow equivalence is clear. 
It follows from Proposition \ref{feequiv} 
that condition $(2)$ is equivalent to 
the following condition  $(2')$: 
the SFTs $X_A,X_{A'}$ are flow equivalent, and  
for each $k$, 
 the factor maps $\pi |_{\mulset_k (\pi )}$ and 
$\pi'|_{\mulset_k (\pi' )}$ are 
flow equivalent.  By the Reduction Theorem 
\ref{theorem:reduction}, we have 
$(1)\iff (2')$. Theorem \ref{thm:pettype} follows. 
\end{proof}

{We next point out a specific case of
  Theorem \ref{thm:pettype}. 
\begin{theorem} \label{aftpetcor} 
  Suppose $Y$ is AFT with Fischer cover $\pi \colon  X\to Y$
  for which $\mathsf{MultiCard}(\pi )$ is a singleton $\{k\}$
  (e.g., if $Y$ is AFT and no point of $Y$ has more than 2
  preimages under $\pi$). 
    Then $Y$ is PET and $Y$ is classified up to
  flow equivalence by the invariants of Theorem
  \ref{thm:pettype}.
\end{theorem}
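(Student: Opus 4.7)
The plan is to derive PET directly from the singleton hypothesis and then invoke Theorem~\ref{thm:pettype} as a black box; once PET is verified, the list of invariants in Theorem~\ref{thm:pettype}(2) collapses to exactly the data of the statement, namely flow equivalence of the SFT domain together with the single $\symk$-flow equivalence class of the left full extension of $\pi|_{\mulset_k(\pi)}$.

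To verify PET, I would unpack Definition~\ref{defn:pet} and check that $\mulsetdown_j(\pi)$ is closed for every $j>1$. For $j\ne k$ this set is empty by the hypothesis $\mathsf{MultiCard}(\pi)=\{k\}$, so only $j=k$ requires attention. The key observation is that the hypothesis forces
\[
\mulset(\pi)\ =\ \mulset_k(\pi),
\]
since on the right-hand side only multiplicity $k$ contributes. Because $Y$ is AFT, Theorem~\ref{thm:aftconditions}(vii) guarantees that $\mulset(\pi)$ is a proper subshift of the domain $X$, in particular closed and compact. Therefore $\mulset_k(\pi)$ is closed; and since $\mulset_k(\pi)=\pi^{-1}(\mulsetdown_k(\pi))$ with $\pi$ surjective, I can identify
\[
\mulsetdown_k(\pi)\ =\ \pi(\mulset_k(\pi))
\]
as the continuous image of a compact set under the factor map, hence closed in $Y$. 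With that, $Y$ is PET and Theorem~\ref{thm:pettype} applies.

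For the parenthetical example, if no point has more than $2$ preimages then $\mathsf{MultiCard}(\pi)\subseteq\{2\}$. Either this set is empty (in which case $\pi$ is a conjugacy, $Y$ is an SFT, and Franks's theorem already classifies $Y$ as a degenerate case of the stated invariants), or it equals $\{2\}$ and the main deduction applies with $k=2$. I do not anticipate any real obstacle; the content of the statement is the nearly tautological observation that when only one nontrivial multiplicity value is present, the AFT hypothesis alone forces the entire multiplicity set to be a closed subshift, which is precisely what PET demands.
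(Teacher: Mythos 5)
Your argument is correct and is essentially the paper's own proof: the paper likewise observes that $\mathsf{MultiCard}(\pi)=\{k\}$ forces $\mulset_k(\pi)=\mulset(\pi)$, which is closed because $Y$ is AFT (Theorem \ref{thm:aftconditions}(vii)), so $Y$ is PET and Theorem \ref{thm:pettype} applies. Your extra details (emptiness of the other multiplicity sets, closedness of $\mulsetdown_k(\pi)$ as the image of a compact set, and the degenerate case where $\pi$ is injective) only make explicit what the paper leaves implicit.
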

\begin{proof}
  Because $Y$ is AFT, $\mulset_k (\pi )=\mulset (\pi )$
  is closed, so $Y$ is PET and is therefore classified up to
  flow equivalence by the invariants of Theorem
  \ref{thm:pettype}.
\end{proof}
}
\begin{remark} 
Assuming the AFT shifts $X$ and $X'$ are strictly sofic, 
the SFTs $X_A,X_{A'}$ in the statement of Theorem 
\ref{thm:pettype} must be nontrivial 
irreducible SFTs (as a strictly sofic irreducible AFT 
shift has more than one orbit); they are then flow equivalent  
if and only if 
$\cok (I-A)\cong \cok (I-A')$ and  
$\det (I-A) = \det (I-A')$ (see \cite{jf:fesft}). 
For any finite group $G$, 
complete algebraic invariants for $G$-SFTs
are known (by \cite{BSullivan} for mixing extensions 
and by \GFE\ in general), but are considerably more complicated.
\end{remark} 

\begin{remark} \label{fullforce} 
Suppose $Y,Y'$ are SFTs. Let $X$ be a mixing SFT 
containing disjoint copies $Y_1,Y_2$ of $Y$ and also 
containing disjoint copies $Y'_1, Y'_2$ of $Y'$. 
Let $\alpha\colon Y_1\to Y_2$ be a topological conjugacy. 
Define a sofic shift $T$ as the quotient 
 $\pi\colon X\to T$ where $\pi (x) = \pi (\alpha (x))$ if $x\in Y_!$ and 
$\pi$ identifies no other points. Similarly define $T'$ via a
conjugacy 
$\alpha'\colon Y'_1\to Y'_2$. By Theorem \ref{thm:pettype}, 
$T$ is flow equivalent to $T'$ if and only if $Y$ is flow equivalent 
to $Y'$. Thus the classification of irreducible sofic shifts up to
flow 
equivalence requires the full classification of general (reducible)
SFTs up to flow equivalence. (Indeed, this was one motivation for
Huang's original investigation \cite{dh:fersft}.) There is a decision
procedure 
for determining whether two SFTs are flow equivalent 
\cite{bs:decide}.  
\end{remark}


\begin{remark} \label{rem:reducetomixingcase}
If for example $\pi$ is an $N$-point extension between mixing 
SFTs, then the full extension will be a disjoint union of 
topologically conjugate irreducible 
SFTs \cite[p.\ 495]{akmfactor}. These are not necessarily $G$-invariant;  
still, in this case 
their equivariant flow equivalence classification can be 
quickly reduced to the 
flow equivalence classification of $G$-SFTs which are mixing (see \cite[Section 4]{BSullivan} or \GFEx{Section 3}). 

But for the
classification of general PET irreducible sofic shifts, the 
classification of general reducible $G$-SFTs is required. 
Indeed,
suppose $G$ is a  finite group and 
two $N$-point extensions are the quotient maps of $G$-actions;  
then the  $N$-point extensions are topologically conjugate if and only if 
 the $G$-actions are topologically conjugate (there is no need to 
introduce the full extension). 
For every $G$-SFT, its quotient map  
can be realized as  the restriction of a Fischer cover of an irreducible PET 
sofic shift to its multiplicity shift.  Thus the 
problem of classifying 
sofic shifts up to conjugacy (or flow equivalence) 
contains the problem of classifying $G$-SFTs.  
\end{remark}

\begin{remark}\label{petvsaft} 
For  irreducible sofic shifts, let's note how 
  the PET condition is a natural refinement of
  the AFT condition. Suppose $\pi \colon X_A\to Y$
  is a Fischer cover. Define (recalling Definition \ref{defn:multiplicity} and \eqref{Epikref}, \eqref{Epiref})
  \begin{align*} 
    \widetilde{\mulsetdown}_{\leq k} (\pi ) &=
    \bigcup_{j\leq k}    E_j(\pi) \\
    \mulset_{\geq k}(\pi )&= \bigcup_{j\geq k}  \mulset_j(\pi).
  \end{align*}
  Then $Y$ is AFT if and only if
  $     \widetilde{\mulsetdown}_{\leq 1} (\pi )$ is isolated
  in $E(\pi)$;
  $Y$ is PET if and only if for each $k$,
  $     \widetilde{\mulsetdown}_{\leq k} (\pi )$ is isolated in $E(\pi)$. 
  Similarly,
  $Y$ is AFT if and only if $\mulset_{\geq 2}(\pi )$ is closed;
  $Y$ is PET if and only if $\mulset_{\geq k}(\pi )$ is closed
  for each $k\geq 2$. 
  \end{remark}   

\begin{remark} \label{lastrites} Given a Fischer cover $\pi\colon  X\to Y$.
Define $\pit \colon  E(\pi)\to \mulset (\pi) $ by $(x,w)\mapsto x$. 
  Then the following are equivalent.
  \begin{enumerate}
  \item
    $Y$ is PET.
  \item
    On each indecomposable component of $E(\pi)$, $\pit$ is constant-to-one;
    and if $C,C'$ are indecomposable components  $E(\pi)$ with
    $\pit (C) \cap \pit (C') \neq \emptyset$, then
    $\pit (C) = \pit (C')$.
  \end{enumerate}
  \end{remark} 

\section{Algorithms for PET sofic shifts} \label{sec:petalg}
  
In this section,  we briefly address decision procedures, computations 
and range of invariants for irreducible PET sofic shifts. 
Throughout, $\pi\colon  X_B\to Y$ is a given right Fischer cover of 
an irreducible sofic shift $Y$. 
This cover is presented by a graph $G_B$, with  edges of $G_B$ 
labeled by elements of  $\alp{Y}$, the alphabet
of $Y$, according to the 
1-block code $\pi$. 

{\bf Deciding whether $Y$ is PET.}

\newcommand{\sel}[1]{{\mathcal T}_{#1}}
\newcommand{\seln}[1]{{\mathcal T}_{#1}^0}
\newcommand{\ttt}{\mathbf t}
\newcommand{\pp}{\mathbf p}
\newcommand{\myex}[9]{\left(\begin{smallmatrix}#1&#2&#3\\#4&#5&#6\\#7&#8&#9\end{smallmatrix}\right)}

To begin, we describe a variant of the subset construction.  
Enumerate the vertices of the graph $G_B$ given by $B$ as
$\{1,\dots,n\}$ and for $1\leq k \leq n$, 
let $\sel{k}$ denote the set of  ordered tuples consisting of 
$k$ distinct elements drawn from $\{1,\dots ,k\}$, 
with elements written in increasing order.  We
denote an element $i$ of $\sel k$ 
by $\fbox{$i_1i_2\cdots i_k$}\, $, with 
$i_1<i_2<\cdots <i_k$, and 
 set $\sel{} = \bigcup_{1\leq k \leq n} \sel{k}$ .  
For $i = \fbox{$i_1i_2\cdots i_k$}\, $, we also 
 define $|i| = k$ and  
$\{ i \} = \{i_1, \dots , i_k\}$,  and  define 
$f(i,a)$ to be the set of terminal vertices  of $G_B$ edges 
which have label $a$ and which 
have initial vertex in $\{i\}$. 


\newcommand{\myg}[2]{\mathcal G[#1]_{#2}}
\newcommand{\mygn}[2]{\widetilde{\mathcal G[#1]}_{#2}}

From the labeled graph $G_B$ we will recursively construct
 vertex  sets 
$\mathcal V (m)$ with the aim of defining an $\alp{Y}$-labeled graph. $\mathcal V (0)$  is the singleton 
$\{i\}$ such that $|i|=n$, and
given $ \mathcal V(m)$, we define 
\begin{gather*} 
  \mathcal V(m+1) =\\ \mathcal V(m)
  \cup \left\{j \in \sel \colon  \exists a\in \alp{Y},i\in\mathcal V(m)
  \text{ such that }  
f(i,a)  = \{j\} \right\}. 
\end{gather*} 
Take $M$ such that  $\mathcal V(M) = \mathcal V(M+1)$.
We equip $\mathcal V(M)$ with an edge set $\mathcal E(M)$ as follows.
For $i= \fbox{$i_1\cdots i_k$}$ and
$j = \fbox{$j_1\cdots j_{\ell}$}$   
 in $\mathcal V(M)$, there is an edge from $i$ to $j$ labeled 
 $a$ if and only if the following hold:
 \begin{enumerate}
 \item
   $\{ j\}=f(i,a)$ , and 
 \item
   if $k> \ell =1$, then at least two edges with initial vertex
   in $\{ i \}$ have label $a$.
   \end{enumerate} 
 
Now let $\gb (\pi )$  be the maximum labeled 
subgraph of $G(M)=(\mathcal V(M),\mathcal E(M))$ such that every vertex has an incoming edge 
and an outgoing edge. Let $\vb_k$ be the set of vertices $i$ of 
$\gb (\pi )$ such that $|i|=k$. Let $\gb_k(\pi )$ be the labeled subgraph 
of $\gb (\pi )$ with vertex set $\vb_k$.   
Note, because $\pi$ is right resolving, if there is an edge in 
$\gb (\pi )$ from $i$ to $j$, then $|i|\geq |j|$.

Let $\Bb$ be an adjacency matrix for the graph $\gb (\pi )$.
Let $\Bb_k$ be the principal submatrix of $\Bb$
on its indices in $\Vb_k$.
Let $\Xb$ denote $X_{\Bb}$ and let $\Xb_k$ be the subshift 
of $\Xb$ which is the edge shift defined from
the subgraph of  $\gb (\pi )$ 
with vertex set $\Vb_k$.
Let $\overline{\phi} \colon  X_{\Bb}\to Y$ be the one-block map  given 
from the edge labeling of $\gb (\pi )$,
and let $\overline{\phi}_k$ be the restriction of
$\overline{\phi}$ to $\Xb_k$. 
Then $\overline{\phi}_k$ maps $\Xb_k$ onto the subset of $\mulsetdown_k(\pi )$ 
whose $k$ preimages are uniformly separated.

Suppose $\gb (\pi )= \gb$ contains an edge $i\to j$ for
which $|i|> |j|$.  Then there exist
$k> h$ and a biinfinite 
path $x$ in $\gb $ such that $x_n$ is a  $\gb_k$ 
edge for all but finitely many negative  $n$ 
and $x_n$ is a  $\gb_h$ 
edge for all but finitely many positive $n$.
If $h=1$, then using condition (2) we conclude $Y$ is not AFT,
hence not PET. 
If $h>1$ and $Y$ is AFT, then $\pi$ is biclosing and
we have points of $\mulset_k(\pi )$ in the closure of $\mulset_h(\pi )$,
and $\pi$ is not PET. 

We summarize with the following proposition.

\begin{proposition}
  For a Fischer cover $\pi\colon  X\to Y$ for an irreducible $Y$,
the following are equivalent. 
\begin{enumerate} 
\item 
$Y$ is PET. 
\item 
$\gb (\pi  ) = \bigcup_{k} \gb_k\,  (\pi )$ \\ 
(i.e., if $e$ is an edge in $\gb (\pi  )$ from $i$ to $j$, 
then $|i|=|j|$).  
\end{enumerate} 
\end{proposition}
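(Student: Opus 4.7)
The plan is to interpret a biinfinite path $x$ in $\gb(\pi)$ as a point $y=\overline{\phi}(x)\in Y$ together with a \emph{witness} of its preimage set: if $x_n$ has initial vertex $i$ of size $k$, then at time $n$ the set $\{i\}$ records (a subset of) the time-$n$ coordinates of preimages of $y$ under $\pi$. Because $\pi$ is right resolving, sizes can only weakly decrease along an edge of $\gb(\pi)$, and condition (2) in the construction is precisely what is needed so that when the size drops to $1$, the drop genuinely records a collapse of at least two preimages. Under this dictionary the theorem says: $Y$ is PET if and only if no such witness ever sees the preimage count change along time.

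For $(2)\Rightarrow(1)$: Assume $\gb(\pi)=\bigcup_k \gb_k(\pi)$. Then each $\Xb_k$ is an SFT disjoint from the others and $\overline{\phi}_k(\Xb_k)\subseteq \mulsetdown_k(\pi)$. I would verify, using the standard subset-construction interpretation and the irreducibility of $X_B$, that in fact
\[
\bigcup_{j\geq k}\mulsetdown_j(\pi)\ =\ \bigcup_{j\geq k}\overline{\phi}_j(\Xb_j),
\]
because every point of $\mulsetdown_j(\pi)$ with $j\geq k$ admits, at each time, a witness of size $\geq k$ in $\gb(\pi)$, and by hypothesis such witnesses never leave $\bigcup_{j\geq k}\gb_j(\pi)$. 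The right-hand side is a finite union of sofic images of SFTs, hence closed; applying this for every $k\geq 2$ and invoking the closed-set characterization in Remark~\ref{petvsaft} proves that $Y$ is PET.

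For $(1)\Rightarrow(2)$: I would argue the contrapositive. Suppose $\gb(\pi)$ has an edge $e\colon i\to j$ with $|i|=k>h=|j|$. Since sizes weakly decrease along edges and every vertex of $\gb(\pi)$ has incoming and outgoing edges, a pigeonhole argument extends $e$ to a biinfinite path $x$ in $\gb(\pi)$ whose sizes $|x_n|$ are eventually constant equal to some $k'\geq k$ for $n\ll 0$ and eventually constant equal to some $h'\leq h$ for $n\gg 0$. Setting $y=\overline{\phi}(x)$, translating from the past produces a sequence of points of $\mulsetdown_{k'}(\pi)$ converging to $y$. If $h'=1$, condition (2) in the construction forces $y$ to have at least two preimages agreeing on arbitrarily long central blocks, so $\pi$ is not injective on any neighbourhood of $y$ while points arbitrarily close to $y$ have $k'\geq 2$ preimages; hence $Y$ is not AFT, so not PET by Theorem~\ref{thm:aftconditions}(vii). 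If $h'\geq 2$, then either $Y$ is already not AFT (and we are done) or $\pi$ is biclosing by Theorem~\ref{thm:aftconditions}, in which case $y$ has exactly $h'$ preimages yet lies in the closure of $\mulsetdown_{k'}(\pi)$ with $k'>h'$, contradicting PET.

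The main obstacle will be the bookkeeping in the $(2)\Rightarrow(1)$ direction, specifically the equality $\bigcup_{j\geq k}\mulsetdown_j(\pi)=\bigcup_{j\geq k}\overline{\phi}_j(\Xb_j)$; this requires carefully exploiting condition (2) to correctly account for witnesses at times when preimages first merge under $\pi$. Once that dictionary is in place, the rest is routine translation between convergence of preimage sets in $Y$ and size-drops in $\gb(\pi)$.
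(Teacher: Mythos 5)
Your direction $(1)\Rightarrow(2)$ (argued contrapositively via a biinfinite path whose vertex sizes drop from $k'$ to $h'$, splitting into the cases $h'=1$ and $h'\geq 2$) is essentially identical to the argument the paper gives in the paragraph preceding the proposition, and is fine. The direction $(2)\Rightarrow(1)$ is the one the paper leaves implicit, and it is exactly there that your proposal has a genuine gap, which you partly acknowledge but do not resolve. The problem is the ``dictionary'' itself: a vertex $i$ of $\gb(\pi)$ visited at time $n$ records the set of terminal vertices of arbitrarily long paths labelled by the past of $y$, and this set need not coincide with the set of time-$n$ vertices of actual preimages of $y$. It can be strictly larger (a left-infinite ray compatible with $y(-\infty,n)$ need not extend forward to a full preimage), and the preimages can occupy strictly fewer vertices than their number whenever $\pi$ fails to be left-closing (two distinct preimages of a right-resolving cover that merge at some time share all subsequent vertices). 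So the assertion that every point of $\mulsetdown_j(\pi)$ with $j\geq k$ ``admits at each time a witness of size $\geq k$'' which moreover stays inside $\bigcup_{j\geq k}\gb_j(\pi)$ does not follow from the definitions; indeed the paper is careful to claim only that $\overline{\phi}_k$ maps $\Xb_k$ onto the points of $\mulsetdown_k(\pi)$ \emph{whose $k$ preimages are uniformly separated}.

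To close the gap you would need two further steps. First, show that hypothesis (2) forces $Y$ to be AFT, so that by Theorem \ref{thm:aftconditions} the cover is left-closing and distinct preimages occupy distinct vertices at every coordinate; without this, the inequality between $|\pi^{-1}(y)|$ and the vertex sizes along a path over $y$ goes the wrong way and the inclusion $\bigcup_{j\geq k}\mulsetdown_j(\pi)\subseteq\bigcup_{j\geq k}\overline{\phi}_j(\Xb_j)$ can fail. Second, show that for every $y\in\mulsetdown(\pi)$ the canonical subset-construction itinerary of $y$ really is a biinfinite path in $\gb(\pi)$: condition (2) in the definition of the edge set $\mathcal E(M)$ can in principle delete a transition along this itinerary (when the vertex size collapses to $1$ via a single edge), and one must check that this only happens when $y$ has a unique preimage or $Y$ is not AFT. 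With those two points in place your union-level identity (correctly stated over $j\geq k$ rather than level by level) does go through and the closedness argument via Remark \ref{petvsaft} is fine; but as written these are precisely the missing ideas, not mere bookkeeping.
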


\begin{examples} \label{petalgexamples}
  Below are 
  two matrices which present labeled graphs of
  Fischer covers, $\pi\colon  X\to Y$.    
  \[
  B=\begin{pmatrix} a+f&0&c\\0&a&b\\d&b&a\end{pmatrix}, \qquad 
  C=\begin{pmatrix} d&e&0&0\\f&a+d&b&c\\0&c&a&b\\0&b&c&a\end{pmatrix}
  . 
  \]
  (For example $B$ presents a labeling of a graph with adjacency matrix
  $\left(\begin{smallmatrix}2&0&1\\0&1&1\\1&1&1\end{smallmatrix}\right)$.)
  For $\pi$ defined by $B$, the sofic shift $Y$ is AFT but not PET;
  the subshift $\mulset_2 (\pi )$ is not closed, and
  in $\gb (\pi )$ 
we see the edge 
\begin{equation}
\xymatrix{
{\fbox{123}}\ar[rrr]^-{b}&&&{\fbox{23}}}.
\end{equation}
For $Y$ defined by $C$, $Y$ is PET;
the vertices of $\gb(\pi )$ are 
${\fbox{234}}$ and  ${\fbox{12}}$, and there
is no edge between these.
(Here,
${\fbox{234}}$ and  ${\fbox{12}}$ have in common the
vertex entry $2$. In general, for $k>\ell \geq 2$,
if $i\in \mathcal V_k$ and $j\in \mathcal V_{\ell}$
and $i,j$ have more than one vertex entry in common,
then $Y$ is not PET.) 
\end{examples}

\begin{remark}
Note that thus far we might as well have indexed the vertices of $\gb(\pi)$ by sets rather than ordered tuples, as in 
\cite{MR2923456}.  As we will see below, the ordering becomes necessary for reading off the skewing functions.
\end{remark}
{\bf  Computing $\Z_+G$ matrices for  PET systems.} 

Suppose that $Y$ is PET, with right Fischer cover
$\pi \colon  X_B \to Y$ and with $\pib \colon  \Xb \to Y$ constructed
as above. Suppose $k\in \mathsf{MultiCard}(\pi )$.

We define a labeling of the edges of the
graph $\gb_k$ as follows. Given such an edge $E$, 
from vertex $i$ to vertex $j$ in $\vb_k$,
we let $ \tau (E)$  be
the unique permutation $\tau$ 
of $\{ 1, \dots , k\} $ such that
for $1\leq t\leq k$ there is an edge labeled $a$
from $i_t$ to $j_{\tau (t)}$. 
The skewing function defined by $x\mapsto \tau (x_0)$
defines a $k$ point extension
$(\Xb_k\times \{1, \dots , k\}, \overline{\rho}_k)$
of $(\Xb_k, \sigma )$.
The square
matrix $B_k$ over $\Z_+\symk $  which
presents this extension is defined as follows:
$B_k (i,j) = \sum_E \tau (E)$, where the sum is over the 
edges $E$ from $i$ to $j$ in $\gb_k$. The matrix
$\Bb_k$ defined earlier is indeed the image of $B_k$ under the
entrywise augmentation map, $\sum_{g\in \symk}  n_gg \mapsto \sum_g n_g$. 

Let $\{\tau_j\colon  1\leq j \leq k\} $ be the
collection of continuous sections
such that $\mulset (\pi )$ is the disjoint union of
the $k$ sets $\tau_j(\mulsetdown_k(\pi ))$.
Explicitly, a point $y$ in $\mulsetdown_k (\pi)$
has $k$ preimages, whose zero coordinate symbols $x_0$ are
edges with initial vertices $i_1< i_2 < \cdots < i_k$;
we choose $\tau_j (y)$ to be the $x$ such that $x_0$
has initial vertex $i_j$. Then 
there is a commuting diagram 

\[
\xymatrix{
{\Xb_k\times \{1, \dots , k\}}
\ar[r]^-{\phi_k}\ar[d]_-{p_k}& 
{\mulset_k (\pi )}\ar[d]_-{\pi_k}    \\
{\Xb_k}\ar[r]_-{\pib_k}&{\mulsetdown_k (\pi )} 
}
\] 
in which $p_k\colon  (x,j) \mapsto x$; $\overline{\phi}$ is
defined by the edge labeling of $\gb_k$; and
$\phi\colon  (x,j) \mapsto \tau_j (\overline{\phi}_k (x))$.
The maps $\phi_k $ and $\overline{\phi}_k$ are homeomorphisms, and
$\pi_k\phi_k = \overline{\phi}_kp_k$. Thus the $k$ point extensions given
by $p_k$ and $\pi_k$ are topologically conjugate.

By Proposition \ref{skewconjugateleftaction},  
the SSE-$\Z_+S_k$ class of the matrix $B_k$ is a complete
invariant for the conjugacy class of the extension $\pi_k$.
The list of SSE-$\Z_+S_k$ classes ($k\in \mathsf{MultiCard}(\pi )$) is therefore a complete invariant for the restriction of
$\pi$ to $M^{-1}(\pi)$. By the extension result \cite[Theorem 1.5]{mbwk:ass}, this list, together with the
SSE-$\Z_+$ class of the matrix $B$ defining the mixing SFT
which is the domain of
$\pi$, determines the conjugacy class of $\pi$
up to finitely many possibilities.

By Theorem \ref{thm:pettype}, the classification
of PET sofic shifts up to flow equivalence is reduced to
the (known) classification of irreducible SFTs up to flow equivalence
and the classification of $G$-SFTs up to flow equivalence.
Complete algebraic invariants for this are given in 
 \GFE. 

{\bf Range of invariants.} 

First, suppose $X$ is a mixing SFT, $\mathcal K$ is a
finite set of positive integers from $[2,\infty )$
  and for $k\in \mathcal K$ we are given a $k$-point
  extension of SFTs, $\psi_k \colon  U_k\to V_k$.
  Let $\psi$ be the disjoint union of the $\psi_k$,
  with domain the disjoint union $U$ of the $U_k$. 
  
Then the following are equivalent.
\begin{enumerate}

\item
  There is a PET shift $Y$ with Fischer cover $\pi$
  whose domain is topologically conjugate to $X$
  and whose restriction to $\mulset (\pi)$ is conjugate
  to $\psi$.
\item
The subshift  $U$ embeds into $X$.
\end{enumerate}
Necessary  and sufficient conditions for $U$ embedding into $X$ are
given by entropy and periodic point counts
according to Krieger's Embedding Theorem. So,  the embedding
constraint is the only  constraint for realizing invariants.

Now consider whether $\psi$ can be realized in a specified
flow equivalence class of nontrivial mixing SFTs. Trivially,
it can,
because every such class is contains SFTs defined by arbitrarily
large matrices with arbitrarily large entries.

%
%

\mbox{}\\ \small
\noindent
\textsc{Department of Mathematics, 
University of Maryland,
College Park, MD 20742-4015, USA}

\emph{E-mail address: }\texttt{mmb@math.umd.edu}\\[0.5cm]

\noindent
\textsc{Department of Science and Technology, University of the Faroe Islands, N\'oat\'un 3, FO-100 T\'orshavn, The Faroe Islands}

\emph{E-mail address: }\texttt{toke.carlsen@gmail.com}\\[0.5cm]

\noindent
\textsc{Department of Mathematical Sciences,  University of Copenhagen, DK-2100 Copenhagen \O, Denmark}

\emph{E-mail address: }\texttt{eilers@math.ku.dk}\\

\end{document}